\documentclass[leqno,12pt]{article}
\setlength{\textheight}{22cm}
\setlength{\textwidth}{16cm}
\setlength{\oddsidemargin}{0cm}
\setlength{\evensidemargin}{0cm}
\setlength{\topmargin}{0cm}
\usepackage{amsmath, amssymb,soul}
\usepackage{amsthm} 


%
%
\theoremstyle{plain} 
\newtheorem{theorem}{\indent\sc Theorem}[section]
\newtheorem{lemma}[theorem]{\indent\sc Lemma}
\newtheorem{corollary}[theorem]{\indent\sc Corollary}
\newtheorem{proposition}[theorem]{\indent\sc Proposition}

\numberwithin{equation}{section}
\theoremstyle{definition} 
\newtheorem{definition}[theorem]{\indent\sc Definition}
\newtheorem{remark}[theorem]{\indent\sc Remark}

%

%

\newcommand\on{\operatorname}
\renewcommand\div{\on{div}}
\newcommand\grad{\on{grad}}
\newcommand\Hess{\on{Hess}}
\newcommand\Ric{\on{Ric}}
\newcommand\scal{\on{scal}}

\makeatletter
%
%
\makeatother

\def\({\left( }
\def\){\right)}
\def\<{\left< }
\def\>{\right>}

\def\e{\eqref}

\def\x{{\bf x}}

\title{Harmonic forms and generalized solitons}
\author{Adara M. Blaga and Bang-Yen Chen}
\date{}

\pagestyle{myheadings}

\begin{document}

\maketitle

\markboth{{\small\it {\hspace{4cm} Harmonic forms and generalized solitons}}}{\small\it{Harmonic forms and generalized solitons
\hspace{4cm}}}

\textbf{Abstract:} For a generalized soliton $(g,\xi,\eta,\beta,\gamma,\delta)$ we provide necessary and sufficient conditions for the dual $1$-form $\xi^{\flat}$ of the potential vector field $\xi$ to be a solution of the Schr\"{o}dinger-Ricci equation, a harmonic or a Schr\"{o}dinger-Ricci harmonic form. We also characterize the $1$-forms orthogonal to $\xi^{\flat}$, underlying the results obtained for the Ricci and Yamabe solitons. Further, we formulate the results for the case of gradient generalized solitons.
Several applications and examples are also presented.\\

{ 
\textbf{2020 Mathematics Subject Classification:}
35C08, 35Q51, 53B05.
}

{ 
\textbf{Keywords:}
Gradient solitons; harmonic $1$-forms.
}


\section{Introduction}

The soliton phenomenon was firstly described by John Scott Russell (1808--1882) in 1834, who observed a solitary wave, that is, a wave keeping the shape while it propagates with a constant speed (see, e.g., \cite{A98}).
The notion of soliton as a stationary solution of a geometric flow on a Riemannian manifold has been lately considered and intensively studied.
In the 1980s, R.S. Hamilton introduced the intrinsic geometric flows, Ricci flow \cite{ham1} and Yamabe flow \cite{ham2}
which are evolution equations for Riemannian metrics. In a $2$-dimensional manifold, Ricci and Yamabe flow are equivalent, but not for higher dimensions.
Recently, in 2019, M. Crasmareanu and S. G\"{u}ler considered a scalar combination of Ricci and Yamabe flow called Ricci-Yamabe flow of type $(\alpha,\beta)$ \cite{cg}, for $\alpha$ and $\beta$ real numbers.
Due to the multiple choices of these scalars, the Ricci-Yamabe flow proves to be useful in different geometrical and physical theories. Let us remark that an interpolation flow between Ricci and Yamabe flows has been proposed by J.-P. Bourguignon in \cite{cal} under the name Ricci-Bourguignon flow, but which depends on a single scalar, studied further by E. Calvino-Louzao, E. Garcia-Rio, P. Gilkey, J.H. Park and R. V\'{a}zquez-Lorenzo in \cite{calv}, and by G. Catino, L. Cremaschi, Z. Djadli, C. Mantegazza and L. Mazzieri in \cite{r}. Also, unormalized version of Ricci-Bourguignon-Yamabe flow has been considered in \cite{mes} from the point of view of spectral geometry. Generalized fixed points of the above mentioned flows, the corresponding solitons, Ricci, Yamabe or Ricci-Bourguignon solitons with different kind of potential vector fields are investigated a lot in the Riemannian and pseudo-Riemannian setting, on manifolds endowed with various geometrical structures, and obstructions on the Ricci and scalar curvatures are consequently determined.

\smallskip

On the other hand, the nonlinear Schr\"{o}dinger-Ricci equation describes the evolution of the envelope of modulated wave groups, underlying the wave mechanics.
In Riemannian geometry, the Schr\"{o}dinger-Ricci equation involving the Ricci curvature tensor, the Laplace-Hodge operator and the Lie derivative of the metric, is subtle connected to the soliton equation and different types of harmonicity.

\smallskip

Considering these, we propose in the present paper to bring into light some aspects relating a more general notion of soliton, which we introduce here, harmonic forms and the solutions of the Schr\"{o}dinger-Ricci equation. Precisely, we provide necessary and sufficient conditions for the dual $1$-form of the potential vector field of a generalized soliton to be a solution of the Schr\"{o}dinger-Ricci equation, a harmonic form or a Schr\"{o}dinger-Ricci harmonic form.
Note that some partial results for the case of $\eta$-Ricci solitons were obtained in \cite{blagha}.
Concerning the particular case of Ricci solitons, we prove the followings: \textit{the dual $1$-form of the potential vector field is always a Schr\"{o}dinger-Ricci harmonic form} (Theorem \ref{te44}), \textit{on a complete Riemannian manifold, the dual $1$-form of the potential vector field is a solution of the Schr\"{o}dinger-Ricci equation if and only if the scalar curvature is constant} (Theorem \ref{t166}), \textit{if the potential vector field is of constant length and its dual $1$-form is a harmonic form, then the Ricci soliton is steady} (Theorem \ref{te1}).
For generalized Yamabe solitons, we show that \textit{the dual $1$-form of the potential vector field is a solution of the Schr\"{o}dinger-Ricci equation and a Schr\"{o}dinger-Ricci harmonic form} (Theorem \ref{tt} and Theorem \ref{tp}).

\section{Preliminaries}

Throughout the paper all the manifolds are assumed to be smooth. Let $(M,g)$ be an $n$-dimensional Riemannian manifold and denote by $\Ric$ and $\scal$ its Ricci curvature tensor and scalar curvature, respectively. We denote the set of all smooth vector fields of $M$ by $\mathfrak{X}(M)$.
Let $\eta$ be a $1$-form and $\xi$ a vector field on $M$, and denote by ${\mathcal L} _{\xi}$ the Lie derivative operator in the direction of $\xi$.
In \cite{bcd} we considered the following notion.

\begin{definition}
We say that $(g,\xi,\eta,\beta,\gamma,\delta)$ defines a \textit{generalized soliton} on a Riemannian manifold $(M,g)$ if
\begin{equation}\label{generalsol}
\frac{1}{2}{\mathcal L} _{\xi}g+\beta\cdot\Ric=\gamma\cdot g+\delta\cdot \eta\otimes \eta,
\end{equation}
with $\beta,\gamma,\delta$ smooth functions on $M$.
\end{definition}

In all the rest of this paper, we shall assume that $\eta:=\xi^{\flat}$ is the dual $1$-form of the potential vector field $\xi$ of the generalized soliton and simply denote it by $(M^n,g,\xi,\beta,\gamma,\delta)$.
Moreover, if $\beta=1$, we talk about an almost $\xi^{\flat}$-Ricci soliton and if $\beta=0$, we talk about an almost $\xi^{\flat}$-Yamabe soliton.
In both of the cases, if $\gamma$ and $\delta$ are constant, we just drop the adjective \textit{almost} and call the soliton $\xi^{\flat}$-Ricci and $\xi^{\flat}$-Yamabe soliton, respectively. Also, if $\beta=\delta=0$, we shall say that $(g,\xi,\gamma)$ defines an almost generalized Yamabe soliton or a generalized Yamabe soliton on $M$ provided $\gamma$ is a constant.

\bigskip

If the vector field $\xi$ is of gradient type, i.e., $\xi:=\grad(f)$, for a smooth function $f$ on $M$, and if we denote by $\Hess(f)$ the Hessian of $f$,
by $\nabla$ the Levi-Civita connection of $g$ and by $Q$ the Ricci operator, i.e., $$g(QX,Y):=\Ric(X,Y)$$ for $X$, $Y\in \mathfrak{X}(M)$,
then ${\mathcal L} _{\xi}g=2\Hess(f)$ and thus (\ref{generalsol}) takes the form
\begin{equation}\label{generalgr}
\Hess(f)+\beta\cdot\Ric=\gamma\cdot g+\delta\cdot \eta\otimes \eta,
\end{equation}
with $\beta,\gamma,\delta$ smooth functions on $M$, which is equivalent to
\begin{equation}\label{h}
\nabla \grad(f)+\beta\cdot Q=\gamma\cdot I+\delta\cdot \eta\otimes \xi,
\end{equation}
where $I$ is the identity endomorphism on $\mathfrak{X}(M)$, and we say that
$(g,\grad(f),\eta,\beta,\gamma,\delta)$ defines a \textit{gradient generalized soliton} on $(M,g)$.

\bigskip

A generalized soliton with $\delta=0$ will be called \textit{steady} if $\gamma=0$, \textit{shrinking} if $\gamma>0$, \textit{expanding} if $\gamma<0$, or \textit{undefined}, otherwise.

\bigskip

By taking the trace and by taking the divergence of (\ref{generalsol}), we consequently obtain
\begin{equation}\label{tr}
\div(\xi)+\beta \scal=n\gamma+\delta |\xi|^2,
\end{equation}
\begin{equation}\label{di}
\frac{1}{2}\div({\mathcal L} _{\xi}g)+\frac{\beta}{2}d(\scal)+i_{Q(\grad(\beta))}g=d\gamma+\eta(\grad(\delta))\eta+\delta\div(\xi)\eta+\delta i_{\nabla_{\xi}\xi}g.
\end{equation}

Also from (\ref{generalsol}) we find
\begin{equation}\label{po}
\beta\, i_{Q(\grad(\beta))}g=\gamma\, i_{\grad(\beta)}g+\delta \eta(\grad(\beta))i_{\xi}g-\frac{1}{2}\left(i_{\nabla_{\grad(\beta)}\xi}g+d\beta\circ \nabla \xi\right)
\end{equation}
and
\begin{equation}\label{pin}
\beta i_{Q\xi}g=(\gamma +\delta |\xi|^2)\eta-\frac{1}{2}\left(i_{\nabla_{\xi}\xi}g+\eta\circ \nabla \xi\right).
\end{equation}

\section{Harmonic aspects in a generalized soliton}\label{Section4}

Consider $(M,g)$ an $n$-dimensional Riemannian manifold, $n\geq 3$, and denote by
$$\flat:TM\rightarrow T^*M, \ \ \flat(X):=i_Xg, \ \ \sharp:T^*M\rightarrow TM, \ \ \sharp:=\flat^{-1}$$
the musical isomorphisms. Further, we shall use the notations $X^{\flat}=:\flat(X)$ and $\theta^{\sharp}=:\sharp(\theta)$.

Let $\mathcal{T}^0_{2,s}(M)$ be the set of symmetric $(0,2)$-tensor fields on $M$ and for $Z\in \mathcal{T}^0_{2,s}(M)$, denote by $Z^{\sharp}:TM\rightarrow TM$ and by $Z_{\sharp}:T^*M\rightarrow T^*M$ the maps defined as follows
$$g(Z^{\sharp}(X),Y):=Z(X,Y), \ \ Z_{\sharp}(\theta)(X):=Z(\sharp(\theta),X).$$
We also denote by $Z^{\sharp}$ the map $Z^{\sharp}:T^*M\times T^*M\rightarrow C^{\infty}(M)$
$$Z^{\sharp}(\theta_1,\theta_2):=Z(\sharp(\theta_1),\sharp(\theta_2))$$
and identify $Z_{\sharp}$ with the map, also denoted by $Z_{\sharp}:T^*M\times TM\rightarrow C^{\infty}(M)$
$$Z_{\sharp}(\theta,X):=Z_{\sharp}(\theta)(X).$$

It is known that \cite{cho}
\begin{equation}\label{e61}
\div({\mathcal L}_Xg)=(\Delta+\Ric_{\sharp})(X^{\flat})+d(\div(X)),
\end{equation}
where
$\Delta$ is the Laplace-Hodge operator on differential forms with respect to the metric $g$.
By a direct computation we deduce
$$\Ric_{\sharp}(\theta)=i_{Q(\theta^{\sharp})}g,$$ for any $1$-form $\theta$.

We are interested to find necessary and sufficient conditions for the dual $1$-form $\xi^{\flat}$ of the potential vector field $\xi$ of a generalized soliton
to be a solution of the Schr\"{o}dinger-Ricci equation, a harmonic form or a Schr\"{o}dinger-Ricci harmonic form.

\subsection{Schr\"{o}dinger-Ricci solutions}\label{Subection4.1}

A $1$-form $\theta$ on a Riemannian manifold $(M,g)$ is a \textit{solution of the Schr\"{o}dinger-Ricci equation} if
$$
(\Delta+\Ric_{\sharp})(\theta)+d(\div(\theta^{\sharp}))=0.
$$

\begin{lemma}
A $1$-form $\theta$ on a Riemannian manifold $(M,g)$ is a solution of the Schr\"{o}dinger-Ricci equation if and only if the dual vector field $\theta^{\sharp}$ of $\theta$ satisfies
\begin{equation}\label{l1}
\div({\mathcal L}_{\theta^{\sharp}}g)=0.
\end{equation}
\end{lemma}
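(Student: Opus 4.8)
The plan is to observe that the general divergence identity \eqref{e61} already encodes the entire equivalence, so the proof reduces to a single well-chosen substitution. First I would apply \eqref{e61} to the specific vector field $X:=\theta^{\sharp}$, the dual of the given $1$-form $\theta$. Since $\flat$ and $\sharp$ are mutually inverse musical isomorphisms, we have $X^{\flat}=(\theta^{\sharp})^{\flat}=\theta$ and $\div(X)=\div(\theta^{\sharp})$.

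Substituting these into \eqref{e61} yields the identity
\begin{equation}\label{l1proof}
\div({\mathcal L}_{\theta^{\sharp}}g)=(\Delta+\Ric_{\sharp})(\theta)+d\big(\div(\theta^{\sharp})\big).
\end{equation}
The right-hand side of \eqref{l1proof} is, by definition, precisely the left-hand side of the Schr\"{o}dinger-Ricci equation for $\theta$. Hence $\theta$ is a solution of the Schr\"{o}dinger-Ricci equation, namely the vanishing of that right-hand side, exactly when the left-hand side $\div({\mathcal L}_{\theta^{\sharp}}g)$ vanishes. This gives both implications of the stated equivalence at once.

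The main point to verify carefully is not an analytic obstacle but a bookkeeping one: that the passage $\theta\mapsto\theta^{\sharp}\mapsto(\theta^{\sharp})^{\flat}=\theta$ holds pointwise, which follows immediately from $\sharp=\flat^{-1}$, and that the Ricci term in \eqref{e61} matches the operator $\Ric_{\sharp}$ appearing in the definition of the Schr\"{o}dinger-Ricci equation. Given the identity $\Ric_{\sharp}(\theta)=i_{Q(\theta^{\sharp})}g$ recorded just before the statement, both sides are expressed in the same operator, so no genuine difficulty arises and the equivalence follows directly from \eqref{l1proof}.
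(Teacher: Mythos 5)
Your proof is correct and follows exactly the route the paper intends: the lemma is an immediate consequence of the identity \eqref{e61} applied to $X=\theta^{\sharp}$, since $(\theta^{\sharp})^{\flat}=\theta$ makes the right-hand side of \eqref{e61} coincide with the Schr\"{o}dinger-Ricci operator applied to $\theta$. The paper states the lemma without an explicit proof precisely because this substitution is the whole argument, so there is nothing to add.
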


We deduce the following proposition from (\ref{tr}), (\ref{di}), (\ref{po}) and (\ref{l1}).

\begin{proposition}\label{t1}
Let $(M^n, g,\xi,\beta,\gamma,\delta)$ be a generalized soliton such that $\beta$ is nowhere zero on $M$.
Then the dual $1$-form $\xi^{\flat}$ of the potential vector field $\xi$ is a solution of the Schr\"{o}dinger-Ricci equation if and only if it satisfies
$$
-\frac{\beta}{2}(d(\scal)+2\delta \scal\cdot\xi^{\flat})=\frac{\gamma}{\beta} d\beta-d\gamma-\frac{1}{2\beta}(\nabla_{\grad(\beta)}\xi^{\flat}+d\beta\circ \nabla \xi)
$$
\begin{equation}\label{kk}
+\left(\frac{\delta}{\beta}\xi^{\flat}(\grad(\beta))-\xi^{\flat}(\grad(\delta))\right)\xi^{\flat}-
\delta(n\gamma+\delta|\xi|^2)\xi^{\flat}-\delta \nabla_{\xi}\xi^{\flat}.
\end{equation}
\end{proposition}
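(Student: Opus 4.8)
The plan is to reduce the statement to the single scalar condition $\div(\mathcal{L}_{\xi}g)=0$ and then read off \eqref{kk} directly from the structural equations of the soliton. Applying the preceding Lemma with $\theta=\xi^{\flat}$, so that $\theta^{\sharp}=\xi$, the dual form $\xi^{\flat}$ solves the Schr\"odinger-Ricci equation if and only if $\div(\mathcal{L}_{\xi}g)=0$. Thus the entire proof amounts to rewriting this vanishing condition in terms of $\beta,\gamma,\delta,\scal$ and $\xi$, using \eqref{tr}, \eqref{di} and \eqref{po}.

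Concretely, I would take \eqref{di}, which already expresses $\tfrac12\div(\mathcal{L}_{\xi}g)$, impose $\div(\mathcal{L}_{\xi}g)=0$, and isolate the block $-\tfrac{\beta}{2}d(\scal)$. Two substitutions then reshape the right-hand side. First, the trace identity \eqref{tr} eliminates $\div(\xi)$ via $\div(\xi)=n\gamma+\delta|\xi|^2-\beta\scal$; the piece $-\beta\scal$ produced here is exactly what, together with $-\tfrac{\beta}{2}d(\scal)$, assembles the bracket $d(\scal)+2\delta\scal\cdot\xi^{\flat}$ on the left-hand side of \eqref{kk}. Second — and this is the only place the hypothesis $\beta\neq 0$ enters — I divide \eqref{po} by $\beta$ to express $i_{Q(\grad(\beta))}g$ as $\tfrac{\gamma}{\beta}\,d\beta+\tfrac{\delta}{\beta}\,\xi^{\flat}(\grad(\beta))\,\xi^{\flat}-\tfrac{1}{2\beta}\bigl(\nabla_{\grad(\beta)}\xi^{\flat}+d\beta\circ\nabla\xi\bigr)$ and insert it. Throughout I would exploit metric compatibility of $\nabla$ to pass between the two pictures, namely $i_{\nabla_{X}\xi}g=\nabla_{X}\xi^{\flat}$, together with $i_{\xi}g=\xi^{\flat}$ and $i_{\grad(\beta)}g=d\beta$, so that every $i_{\bullet}g$ occurring in \eqref{di} and \eqref{po} is converted into the $\xi^{\flat}$-language of the statement.

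I expect the sole obstacle to be bookkeeping rather than any conceptual difficulty. The delicate point is to keep the genuine one-form differentials $d\gamma$, $d\beta$, $d(\scal)$ separate from the terms proportional to $\xi^{\flat}$, and to check that the three $\xi^{\flat}$-proportional contributions arising from the computation, namely $-\xi^{\flat}(\grad(\delta))\,\xi^{\flat}$, $+\tfrac{\delta}{\beta}\xi^{\flat}(\grad(\beta))\,\xi^{\flat}$ and $-\delta(n\gamma+\delta|\xi|^2)\xi^{\flat}$, reassemble into precisely the coefficients displayed in \eqref{kk}. The scalar-curvature splitting between the $d(\scal)$ term and the $\scal\cdot\xi^{\flat}$ term, produced by the substitution of \eqref{tr}, is the one spot where a sign slip is easy; once the Lemma is in hand and the overall sign is fixed, the identity follows by direct collection of terms.
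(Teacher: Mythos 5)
Your proposal is correct and follows essentially the same route as the paper: reduce to $\div(\mathcal{L}_{\xi}g)=0$ via the Lemma, substitute \eqref{di}, eliminate $\div(\xi)$ with \eqref{tr}, and use \eqref{po} divided by $\beta$ to replace $i_{Q(\grad(\beta))}g$, then convert the $i_{\bullet}g$ terms via the musical isomorphisms. The term-by-term bookkeeping you describe, including how the $-\beta\scal$ piece from \eqref{tr} combines with $-\tfrac{\beta}{2}d(\scal)$, matches the paper's computation exactly.
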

\begin{proof}
From (\ref{l1}) we know that $\xi^{\flat}$ is a solution of the Schr\"{o}dinger-Ricci equation if and only if $\div({\mathcal L}_{\xi}g)=0$, which by means of (\ref{tr}), (\ref{di}) and (\ref{po}), is equivalent to
\begin{equation}\begin{aligned} &\nonumber
0=-\frac{\beta}{2}d(\scal)-i_{Q(\grad(\beta))}g
+d\gamma+\xi^{\flat}(\grad(\delta))\xi^{\flat}+\delta\div(\xi)\xi^{\flat}+\delta i_{\nabla_{\xi}\xi}g
\\& \hskip.3in =-\frac{\beta}{2}d(\scal)-\frac{\gamma}{\beta} i_{\grad(\beta)}g-\frac{\delta}{\beta} \xi^{\flat}(\grad(\beta))i_{\xi}g+\frac{1}{2\beta}\left(i_{\nabla_{\grad(\beta)}\xi}g+d\beta\circ \nabla \xi\right)+
d\gamma
\\& \hskip.3in +\xi^{\flat}(\grad(\delta))\xi^{\flat}+\delta(n\gamma+\delta |\xi|^2-\beta \scal)\xi^{\flat}+\delta i_{\nabla_{\xi}\xi}g
\\& \hskip.3in =-\frac{\beta}{2}d(\scal)-\frac{\gamma}{\beta} d\beta-\frac{\delta}{\beta} \xi^{\flat}(\grad(\beta))\xi^{\flat}+\frac{1}{2\beta}\left(\nabla_{\grad(\beta)}\xi^{\flat}+d\beta\circ \nabla \xi\right)+
d\gamma
\\& \hskip.3in +\xi^{\flat}(\grad(\delta))\xi^{\flat}+\delta(n\gamma+\delta |\xi|^2-\beta \scal)\xi^{\flat}+\delta \nabla_{\xi}\xi^{\flat}.
\end{aligned}
\end{equation}
\end{proof}

Proposition \ref{t1} implies the following.

\begin{theorem}
Let $(M,g)$ be a Riemannian manifold of constant scalar curvature and $\xi$ a constant length vector field such that its dual $1$-form $\xi^{\flat}$ is a solution of the Schr\"{o}dinger-Ricci equation. If $(g,\xi,\gamma,\delta)$ defines a $\xi^{\flat}$-Ricci soliton on $M$, then either the soliton is a Ricci soliton or $\xi$ is a divergence-free vector field.
\end{theorem}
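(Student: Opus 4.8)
The plan is to specialize Proposition \ref{t1} to the present hypotheses and read off a pointwise identity. Since $(g,\xi,\gamma,\delta)$ defines a $\xi^{\flat}$-Ricci soliton we have $\beta=1$ (in particular nowhere zero, so Proposition \ref{t1} applies) with $\gamma,\delta$ constant, and $\scal$ is constant by assumption. First I would substitute $d\beta=0$, $\grad(\beta)=0$, $d\gamma=0$, $\grad(\delta)=0$ and $d(\scal)=0$ into \eqref{kk}. Every term carrying a derivative of $\beta$, $\gamma$, $\delta$ or $\scal$ then vanishes, and \eqref{kk} collapses to
\[
-\delta\,\scal\cdot\xi^{\flat}=-\delta\(n\gamma+\delta|\xi|^2\)\xi^{\flat}-\delta\,\nabla_{\xi}\xi^{\flat}.
\]

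Next I would bring in the trace relation \eqref{tr}, which for $\beta=1$ reads $\div(\xi)+\scal=n\gamma+\delta|\xi|^2$. Replacing $n\gamma+\delta|\xi|^2$ by $\scal+\div(\xi)$ in the identity above cancels the two $\delta\,\scal\cdot\xi^{\flat}$ contributions and leaves the clean relation
\[
\delta\(\div(\xi)\,\xi^{\flat}+\nabla_{\xi}\xi^{\flat}\)=0.
\]
At this point the dichotomy is already visible: either $\delta\equiv 0$, in which case \eqref{generalsol} reduces to the Ricci soliton equation $\frac12{\mathcal L}_{\xi}g+\Ric=\gamma\,g$, or else $\div(\xi)\,\xi^{\flat}+\nabla_{\xi}\xi^{\flat}=0$ on the locus where $\delta\neq 0$.

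The final step is to convert this $1$-form identity into a statement about $\div(\xi)$, and this is precisely where the constant-length hypothesis is used. Evaluating the $1$-form $\div(\xi)\,\xi^{\flat}+\nabla_{\xi}\xi^{\flat}$ on $\xi$ itself yields $\div(\xi)\,|\xi|^2+g(\nabla_{\xi}\xi,\xi)$, and since $\nabla$ is metric we have $g(\nabla_{\xi}\xi,\xi)=\tfrac12\,\xi(|\xi|^2)=0$ because $|\xi|$ is constant. Hence $\div(\xi)\,|\xi|^2=0$, so on the set where $\delta\neq 0$ and $|\xi|$ is a nonzero constant we get $\div(\xi)=0$ (the case $\xi\equiv 0$ being trivially divergence-free). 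I expect the only subtle move to be this last one: choosing to pair the $1$-form identity with $\xi$ rather than with an arbitrary vector field, so that the otherwise inaccessible term $\nabla_{\xi}\xi^{\flat}$ is annihilated by constant length, together with the bookkeeping needed to dispose of the degenerate possibility that $|\xi|^2$ vanishes.
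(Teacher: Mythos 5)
Your proposal is correct and follows essentially the same route as the paper: specialize \eqref{kk} under the constancy hypotheses to obtain $\delta\left(\div(\xi)\xi^{\flat}+\nabla_{\xi}\xi^{\flat}\right)=0$, split into the cases $\delta=0$ and $\delta\neq 0$, and in the latter case evaluate the $1$-form identity on $\xi$ so that $g(\nabla_{\xi}\xi,\xi)=\frac{1}{2}\xi(|\xi|^2)=0$ forces $\div(\xi)=0$. Your explicit use of the trace identity \eqref{tr} to reintroduce $\div(\xi)$, and your remark about the degenerate case $|\xi|^2=0$, are just slightly more careful renderings of steps the paper leaves implicit.
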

\begin{proof}
Taking into account that $\beta=1$, $\gamma$, $\delta$ and $\scal$ are constant, we obtain
$$\delta\left(\div(\xi)\xi^{\flat}+\nabla_{\xi}\xi^{\flat}\right)=0$$
from (\ref{kk}), which either implies $\delta=0$ or $\div(\xi)\xi^{\flat}=-\nabla_{\xi}\xi^{\flat}$. Computing the second relation in $\xi$, we find
$$\div(\xi)=-\frac{\xi(|\xi|^2)}{2|\xi|^2}=0.$$
\end{proof}

\begin{theorem}\label{t166}
Let $(M,g)$ be a complete Riemannian manifold.
If $(M,g,\xi,\gamma)$ is a Ricci soliton, then the dual $1$-form $\xi^{\flat}$ of the potential vector field $\xi$ is a solution of the Schr\"{o}dinger-Ricci equation if and only if the scalar curvature of $M$ is constant.
\end{theorem}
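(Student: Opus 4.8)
The plan is to reduce the Schr\"{o}dinger-Ricci condition on $\xi^{\flat}$ to a condition on the scalar curvature and then read off the equivalence from the identities already established. By (\ref{l1}), $\xi^{\flat}$ is a solution of the Schr\"{o}dinger-Ricci equation if and only if $\div({\mathcal L}_{\xi}g)=0$, so the whole statement reduces to showing that, for a Ricci soliton, this vanishing is equivalent to $d(\scal)=0$.

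First I would invoke Proposition \ref{t1}, which applies because a Ricci soliton has $\beta=1$ and hence $\beta$ nowhere zero. For a Ricci soliton one moreover has $\delta=0$ and $\gamma$ constant, so that $d\beta=0$, $d\gamma=0$, and every term of (\ref{kk}) carrying a factor $\delta$ or $\grad(\beta)$ vanishes. Thus (\ref{kk}) collapses to
$$-\frac{1}{2}d(\scal)=0.$$
Equivalently, specializing the divergence identity (\ref{di}) yields directly $\div({\mathcal L}_{\xi}g)=-d(\scal)$, which is just the contracted second Bianchi identity $\div(\Ric)=\frac{1}{2}d(\scal)$ inserted into the divergence of the soliton equation. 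Either way, $\div({\mathcal L}_{\xi}g)=0$ if and only if $d(\scal)=0$.

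It then remains to close both implications. If $\scal$ is constant, then $d(\scal)=0$, hence $\div({\mathcal L}_{\xi}g)=0$, and $\xi^{\flat}$ solves the Schr\"{o}dinger-Ricci equation by (\ref{l1}). Conversely, if $\xi^{\flat}$ is such a solution, then $\div({\mathcal L}_{\xi}g)=0$, so $d(\scal)=0$ and $\scal$ is locally constant; connectedness of $M$ then upgrades this to $\scal$ constant on all of $M$. I do not expect a genuine obstacle here: the content is entirely the collapse of (\ref{kk}) (equivalently of (\ref{di})) to the single relation $\div({\mathcal L}_{\xi}g)=-d(\scal)$, after which the equivalence is immediate. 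The only global ingredient is passing from locally constant to globally constant scalar curvature, for which connectedness suffices; completeness, beyond ensuring connectedness, seems inessential to this particular equivalence and is presumably retained to align with the ambient setting of the companion theorems.
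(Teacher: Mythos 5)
Your proposal is correct and follows essentially the same route as the paper: specialize Proposition \ref{t1} (equation (\ref{kk})) to $\beta=1$, $\delta=0$, $\gamma$ constant, so that the condition collapses to $d(\scal)=0$. Your side remarks (the Bianchi-identity reformulation and the observation that completeness is not actually used beyond connectedness) are accurate but not part of the paper's argument.
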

\begin{proof}
For $\beta=1$, $\delta=0$ and $\gamma$ constant, we deduce that (\ref{kk}) holds if and only if
$d(\scal)=0$, hence the conclusion.
\end{proof}

Similarly, we deduce the following proposition from (\ref{tr}), (\ref{di}) for $\beta=0$, and (\ref{l1}).

\begin{proposition}\label{p11}
Let $(M^n, g,\xi,\gamma,\delta)$ be an almost $\xi^{\flat}$-Yamabe soliton.
Then the dual $1$-form $\xi^{\flat}$ of the potential vector field $\xi$ is a solution of the Schr\"{o}dinger-Ricci equation if and only if it satisfies
\begin{equation}\label{p1}
d\gamma+\xi^{\flat}(\grad(\delta))\xi^{\flat}+
\delta(n\gamma+\delta|\xi|^2)\xi^{\flat}+\delta \nabla_{\xi}\xi^{\flat}=0.
\end{equation}
\end{proposition}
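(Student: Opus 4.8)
The plan is to specialize Proposition~\ref{t1} to the Yamabe case, which corresponds to $\beta=0$. However, since Proposition~\ref{t1} was derived under the standing hypothesis that $\beta$ is nowhere zero, one cannot simply substitute $\beta=0$ into \eqref{kk}; the factors of $1/\beta$ forbid this. Instead I would return to the source identities \eqref{tr}, \eqref{di} and the characterization \eqref{l1}, and redo the computation of the proof of Proposition~\ref{t1} with $\beta$ set identically to zero from the outset. This is the cleanest route and avoids any illegitimate limiting argument.

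Concretely, the first step is to invoke Lemma~\ref{l1}: the dual $1$-form $\xi^{\flat}$ solves the Schr\"{o}dinger-Ricci equation if and only if $\div(\mathcal{L}_{\xi}g)=0$. The second step is to expand $\frac{1}{2}\div(\mathcal{L}_{\xi}g)$ using \eqref{di}. With $\beta=0$, the terms $\frac{\beta}{2}d(\scal)$ and $i_{Q(\grad(\beta))}g$ both vanish, so \eqref{di} collapses to
\[
\frac{1}{2}\div(\mathcal{L}_{\xi}g)=d\gamma+\eta(\grad(\delta))\eta+\delta\div(\xi)\eta+\delta\, i_{\nabla_{\xi}\xi}g.
\]
The third step is to eliminate $\div(\xi)$ using the trace identity \eqref{tr}, which for $\beta=0$ reads $\div(\xi)=n\gamma+\delta|\xi|^2$. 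Substituting this, recalling $\eta=\xi^{\flat}$, and rewriting $i_{\nabla_{\xi}\xi}g=\nabla_{\xi}\xi^{\flat}$ yields exactly \eqref{p1}. Setting the whole expression to zero then gives the stated equivalence.

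There is no substantial obstacle here: the result is a direct specialization and the computation is a genuinely simpler version of the one already carried out in the proof of Proposition~\ref{t1}. The only point requiring a little care is the bookkeeping in \eqref{di}, namely confirming that $\eta(\grad(\delta))\eta$ is precisely $\xi^{\flat}(\grad(\delta))\xi^{\flat}$ under the identification $\eta=\xi^{\flat}$, and that the $i_{\nabla_\xi\xi}g$ term translates correctly into $\nabla_\xi\xi^{\flat}$. Since the hypothesis is merely that $(M^n,g,\xi,\gamma,\delta)$ is an almost $\xi^{\flat}$-Yamabe soliton (so $\beta=0$ with $\gamma,\delta$ allowed to vary), no constancy assumptions are used and the proof is essentially a one-line dereivation once \eqref{tr} and \eqref{di} are specialized.
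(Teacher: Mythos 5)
Your proposal is correct and follows essentially the same route as the paper: the authors likewise do not substitute $\beta=0$ into \eqref{kk} but instead apply \eqref{l1} and specialize \eqref{tr} and \eqref{di} to $\beta=0$, eliminating $\div(\xi)$ via the trace identity and rewriting $i_{\nabla_\xi\xi}g=\nabla_\xi\xi^{\flat}$ to obtain \eqref{p1}. No discrepancies to report.
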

\begin{proof}
From (\ref{l1}) we know that $\xi^{\flat}$ is a solution of the Schr\"{o}dinger-Ricci equation if and only if $\div({\mathcal L}_{\xi}g)=0$, which by means of (\ref{tr}) and (\ref{di}) for $\beta=0$, is equivalent to
\begin{equation}\begin{aligned} &\nonumber
0=d\gamma+\xi^{\flat}(\grad(\delta))\xi^{\flat}+\delta\div(\xi)\xi^{\flat}+\delta i_{\nabla_{\xi}\xi}g
\\& \hskip.3in =d\gamma+\xi^{\flat}(\grad(\delta))\xi^{\flat}+\delta(n\gamma+\delta|\xi|^2)\xi^{\flat}+\delta \nabla_{\xi}\xi^{\flat}.
\end{aligned}
\end{equation}
\end{proof}

We immediately obtain the followings from Proposition \ref{p11}.

\begin{theorem}
Let $(M,g,\xi,\gamma,\delta)$ be a $\xi^{\flat}$-Yamabe soliton whose potential vector field $\xi$ is of constant length. If the dual $1$-form $\xi^{\flat}$ of $\xi$ is a solution of the Schr\"{o}dinger-Ricci equation, then either the soliton is a generalized Yamabe soliton or $\xi$ is a divergence-free vector field.
\end{theorem}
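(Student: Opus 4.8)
The plan is to specialize Proposition~\ref{p11} to the present hypotheses and then extract the conclusion by contracting the resulting identity with $\xi$ itself, in complete parallel with the $\xi^{\flat}$-Ricci case treated above. Since $(g,\xi,\gamma,\delta)$ is a $\xi^{\flat}$-Yamabe soliton, both $\gamma$ and $\delta$ are constant, so $d\gamma=0$ and $\grad(\delta)=0$; hence the terms $d\gamma$ and $\xi^{\flat}(\grad(\delta))\xi^{\flat}$ in (\ref{p1}) drop out. Because $\xi^{\flat}$ is assumed to solve the Schr\"{o}dinger-Ricci equation, Proposition~\ref{p11} applies and (\ref{p1}) collapses to
$$
\delta\left[(n\gamma+\delta|\xi|^2)\xi^{\flat}+\nabla_{\xi}\xi^{\flat}\right]=0.
$$
This yields at once the dichotomy: either $\delta=0$, in which case the soliton reduces to a generalized Yamabe soliton, or
$$
(n\gamma+\delta|\xi|^2)\xi^{\flat}+\nabla_{\xi}\xi^{\flat}=0.
$$

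In the second alternative I would first note that the trace identity (\ref{tr}) with $\beta=0$ reads $\div(\xi)=n\gamma+\delta|\xi|^2$, so the surviving coefficient is precisely $\div(\xi)$. The relation thereby becomes $\div(\xi)\,\xi^{\flat}+\nabla_{\xi}\xi^{\flat}=0$, which is the very equation encountered in the proof of the $\xi^{\flat}$-Ricci analogue. Contracting it with $\xi$ and using $\xi^{\flat}(\xi)=|\xi|^2$ together with the identity $(\nabla_{\xi}\xi^{\flat})(\xi)=g(\nabla_{\xi}\xi,\xi)=\tfrac12\,\xi(|\xi|^2)$, one obtains $\div(\xi)\,|\xi|^2+\tfrac12\,\xi(|\xi|^2)=0$.

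The final step invokes the constant length hypothesis: as $|\xi|^2$ is constant, $\xi(|\xi|^2)=0$, whence $\div(\xi)\,|\xi|^2=0$. Off the zero set of $\xi$ this forces $\div(\xi)=0$, while on a Riemannian manifold $|\xi|^2=0$ means $\xi=0$, which is divergence-free as well; thus $\xi$ is a divergence-free vector field, as claimed. The only genuinely nontrivial point is recognizing that the leftover scalar $n\gamma+\delta|\xi|^2$ coincides with $\div(\xi)$ via (\ref{tr}), which is exactly what lets the computation close in the same manner as the Ricci case; the remaining contractions are routine.
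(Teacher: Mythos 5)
Your argument is correct and follows essentially the same route as the paper's own proof: specialize (\ref{p1}) using the constancy of $\gamma$ and $\delta$, identify $n\gamma+\delta|\xi|^2$ with $\div(\xi)$ via (\ref{tr}), and contract the resulting identity with $\xi$ to conclude $\div(\xi)=0$ from the constant-length hypothesis. Your explicit handling of the degenerate case $|\xi|^2=0$ is a small added precision over the paper's computation, which implicitly divides by $|\xi|^2$.
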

\begin{proof}
Taking into account that $\gamma$ and $\delta$ are constant, we obtain
$$\delta\left(\div(\xi)\xi^{\flat}+\nabla_{\xi}\xi^{\flat}\right)=0$$
from (\ref{p1}), which either implies $\delta=0$ or $\div(\xi)\xi^{\flat}=-\nabla_{\xi}\xi^{\flat}$. Computing the second relation in $\xi$, we find
$$\div(\xi)=-\frac{\xi(|\xi|^2)}{2|\xi|^2}=0.$$
\end{proof}

\begin{theorem}\label{c:4.7}
Let $(M,g)$ be a complete Riemannian manifold.
If $(M,g,\xi,\gamma)$ is an almost generalized Yamabe soliton, then the dual $1$-form $\xi^{\flat}$ of the potential vector field $\xi$ is a solution of the Schr\"{o}dinger-Ricci equation if and only if the soliton is a generalized Yamabe soliton.
\end{theorem}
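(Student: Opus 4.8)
The plan is to obtain the statement as a direct specialization of Proposition~\ref{p11}, since an almost generalized Yamabe soliton is exactly an almost $\xi^{\flat}$-Yamabe soliton in which $\delta\equiv 0$. First I would invoke Proposition~\ref{p11} and substitute $\delta=0$ into its characterizing equation~\eqref{p1}. Every summand there carries a factor of $\delta$ except for $d\gamma$, so the whole condition collapses to the single scalar equation $d\gamma=0$.

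From that point the argument is immediate and parallels the proof of Theorem~\ref{t166}. By the criterion \eqref{l1}, the form $\xi^{\flat}$ solves the Schr\"{o}dinger-Ricci equation precisely when $\div({\mathcal L}_{\xi}g)=0$, which we have just reduced to $d\gamma=0$. On a connected manifold a closed $0$-form is constant, so $d\gamma=0$ holds if and only if $\gamma$ is constant; by definition this is exactly the condition for the almost generalized Yamabe soliton to be a genuine generalized Yamabe soliton. The converse direction is the same chain of equivalences read backwards, giving the stated ``if and only if''.

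I expect no real obstacle here: the content is the triviality that a gradient-free function on a connected base is constant, packaged through Proposition~\ref{p11}. The only point I would flag for the reader is that the completeness hypothesis, just as in Theorem~\ref{t166}, is not actually needed for the computation --- the equivalence holds on any connected Riemannian manifold --- and is retained only for consistency with the neighbouring results.
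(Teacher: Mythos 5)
Your proposal is correct and follows exactly the paper's own route: specialize Proposition~\ref{p11} to $\delta=0$, reduce the condition to $d\gamma=0$, and identify this with $\gamma$ being constant. Your side remark that connectedness rather than completeness is what is really used is a fair observation but does not change the argument.
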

\begin{proof}
For $\delta=0$, we deduce that (\ref{p1}) holds if and only if $d\gamma=0$, hence the conclusion.
\end{proof}

\begin{theorem}\label{tt}
For every generalized Yamabe soliton $(M,g,\xi,\gamma)$, the dual $1$-form $\xi^{\flat}$ of the potential vector field $\xi$ is a solution of the Schr\"{o}dinger-Ricci equation.
\end{theorem}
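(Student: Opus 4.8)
The plan is to deduce the statement directly from Proposition \ref{p11}, since a generalized Yamabe soliton is, by the definitions in Section~2, precisely the special case of an almost $\xi^{\flat}$-Yamabe soliton in which $\beta=\delta=0$ and $\gamma$ is constant. First I would observe that the hypothesis $\beta=0$ is exactly what Proposition \ref{p11} requires: its characterization was derived from (\ref{tr}) and (\ref{di}) under the sole assumption that the Ricci term drops out of the soliton equation, so it applies verbatim to a generalized Yamabe soliton. It therefore suffices to check that condition (\ref{p1}) is fulfilled.

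Next I would substitute the defining data into (\ref{p1}). Setting $\delta=0$ annihilates the three terms $\xi^{\flat}(\grad(\delta))\xi^{\flat}$, $\delta(n\gamma+\delta|\xi|^2)\xi^{\flat}$ and $\delta\nabla_{\xi}\xi^{\flat}$, leaving only $d\gamma$. Since $\gamma$ is constant for a generalized (as opposed to almost generalized) Yamabe soliton, we have $d\gamma=0$, and hence (\ref{p1}) collapses to the identity $0=0$, which holds unconditionally. By Proposition \ref{p11}, $\xi^{\flat}$ is then a solution of the Schr\"{o}dinger-Ricci equation, as asserted.

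I expect essentially no obstacle here: the statement is unconditional, requiring neither curvature nor completeness assumptions, precisely because both potentially obstructing ingredients---the $\delta$-dependent terms and the gradient $d\gamma$---vanish identically. The only point deserving a word of care is the bookkeeping of hypotheses: one should confirm that Proposition \ref{p11} needs only $\beta=0$, so that it may legitimately be invoked, and note that, unlike the companion Theorem \ref{c:4.7}, no completeness hypothesis is imported, since here we travel only the easy implication from constant $\gamma$ to the Schr\"{o}dinger-Ricci solution property rather than the converse.
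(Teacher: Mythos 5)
Your proposal is correct and matches the paper's (implicit) argument: Theorem \ref{tt} is stated as an immediate consequence of Proposition \ref{p11}, obtained exactly as you describe by setting $\delta=0$ so that (\ref{p1}) reduces to $d\gamma=0$, which holds since $\gamma$ is constant. No issues.
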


\subsection{Schr\"{o}dinger-Ricci harmonic forms}\label{Subection4.2}

A $1$-form $\theta$ on a Riemannian manifold $(M,g)$ is called \textit{Schr\"{o}dinger-Ricci harmonic} if it  satisfies
$$(\Delta+\Ric_{\sharp})(\theta)=0.$$

\begin{lemma}
A $1$-form $\theta$ on a Riemannian manifold $(M,g)$ is a Schr\"{o}dinger-Ricci harmonic form if and only if
the dual vector field $\theta^{\sharp}$ of $\theta$ satisfies \begin{equation}\label{l2}
\div({\mathcal L}_{\theta^{\sharp}}g)=d(\div(\theta^{\sharp})).
\end{equation}
\end{lemma}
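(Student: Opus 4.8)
The plan is to derive the equivalence directly from the Weitzenböck-type identity (\ref{e61}), in complete parallel with the preceding lemma characterizing solutions of the Schr\"{o}dinger-Ricci equation. First I would specialize (\ref{e61}) to the vector field $X=\theta^{\sharp}$. Since $\flat$ and $\sharp$ are mutually inverse musical isomorphisms, we have $(\theta^{\sharp})^{\flat}=\theta$, so (\ref{e61}) becomes
$$\div({\mathcal L}_{\theta^{\sharp}}g)=(\Delta+\Ric_{\sharp})(\theta)+d(\div(\theta^{\sharp})).$$

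Rearranging this identity isolates the Schr\"{o}dinger-Ricci operator applied to $\theta$:
$$(\Delta+\Ric_{\sharp})(\theta)=\div({\mathcal L}_{\theta^{\sharp}}g)-d(\div(\theta^{\sharp})).$$
By definition, $\theta$ is Schr\"{o}dinger-Ricci harmonic precisely when the left-hand side vanishes, and the displayed identity shows that this holds if and only if $\div({\mathcal L}_{\theta^{\sharp}}g)=d(\div(\theta^{\sharp}))$, which is exactly (\ref{l2}). Both implications follow at once from this single equivalence of vanishing, so no separate "only if" and "if" arguments are needed.

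Since (\ref{e61}) is quoted from the literature and the only further input is the involutivity of the musical isomorphisms, there is essentially no genuine obstacle; the proof is a one-line rearrangement of a known formula. The only point deserving minor care is the bookkeeping $(\theta^{\sharp})^{\flat}=\theta$, ensuring that the $1$-form on which the operator $\Delta+\Ric_{\sharp}$ acts is exactly $\theta$ rather than the image of $\theta$ under some nontrivial composition of $\flat$ and $\sharp$. This lemma differs from its predecessor only in that the term $d(\div(\theta^{\sharp}))$ is retained on the right-hand side of (\ref{l2}) instead of being absorbed into the definition of the operator, which is precisely the distinction between being Schr\"{o}dinger-Ricci harmonic and being a solution of the full Schr\"{o}dinger-Ricci equation.
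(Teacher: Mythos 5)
Your proof is correct and follows exactly the route the paper intends: substitute $X=\theta^{\sharp}$ into the identity (\ref{e61}), use $(\theta^{\sharp})^{\flat}=\theta$, and observe that $(\Delta+\Ric_{\sharp})(\theta)=0$ is equivalent to (\ref{l2}) by a one-line rearrangement. The paper states the lemma without a written proof precisely because this is the argument, so there is nothing to add.
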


Since
$$d(\div(\theta^{\sharp}))=i_{\grad(\div(\theta^{\sharp}))}g, \ \ \theta\circ \nabla \theta^{\sharp}=\frac{1}{2}d(|\theta^{\sharp}|^2), \ \ (\nabla_X\theta)^{\sharp}=\nabla_X\theta^{\sharp},$$
we deduce the following proposition from (\ref{tr}), (\ref{di}), (\ref{po}) and (\ref{l2}).

\begin{proposition}\label{te}
Let $(M^n,g,\xi,\beta,\gamma,\delta)$ be a generalized soliton such that $\beta$ is nowhere zero on $M$.
Then the dual $1$-form $\xi^{\flat}$ of the potential vector field $\xi$ is a Schr\"{o}dinger-Ricci harmonic form if and only if it satisfies
$$
\frac{\scal}{2}(d\beta-2\beta\delta \xi^{\flat})=\frac{\gamma}{\beta}d\beta+\frac{n-2}{2}d\gamma+\frac{|\xi|^2}{2}d\delta-\frac{1}{2\beta}(\nabla_{\grad(\beta)}\xi^{\flat}+d\beta\circ \nabla \xi)$$
\begin{equation}\label{kk2}
+\left(\frac{\delta}{\beta}\xi^{\flat}(\grad(\beta))-\xi^{\flat}(\grad(\delta))\right)\xi^{\flat}-
\delta(n\gamma+\delta|\xi|^2)\xi^{\flat}-\delta(\nabla_{\xi}\xi^{\flat}-\xi^{\flat}\circ \nabla \xi).
\end{equation}
\end{proposition}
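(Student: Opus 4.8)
First I would apply the preceding lemma: by \eqref{l2} with $\theta=\xi^{\flat}$ (so that $\theta^{\sharp}=\xi$), the form $\xi^{\flat}$ is Schr\"{o}dinger-Ricci harmonic if and only if
$$\div({\mathcal L}_{\xi}g)=d(\div(\xi)).$$
The plan is to evaluate the two sides separately and then match the resulting identity with \eqref{kk2}. The left-hand side is handled exactly as in the proof of Proposition \ref{t1}: I would start from \eqref{di}, insert \eqref{tr} into the term $\delta\div(\xi)\xi^{\flat}$ and \eqref{po} into $i_{Q(\grad(\beta))}g$, and then rewrite the contractions $i_{\grad(\beta)}g$, $i_{\xi}g$, $i_{\nabla_{\grad(\beta)}\xi}g$, $i_{\nabla_{\xi}\xi}g$ via the musical isomorphism as $d\beta$, $\xi^{\flat}$, $\nabla_{\grad(\beta)}\xi^{\flat}$, $\nabla_{\xi}\xi^{\flat}$, respectively. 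This reproduces verbatim the expression already obtained for $\tfrac12\div({\mathcal L}_{\xi}g)$ in the computation of Proposition \ref{t1}.

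The genuinely new ingredient is the right-hand side $d(\div(\xi))$. Differentiating the trace identity \eqref{tr}, written as $\div(\xi)=n\gamma+\delta|\xi|^2-\beta\scal$, gives
$$d(\div(\xi))=n\,d\gamma+|\xi|^2\,d\delta+\delta\,d(|\xi|^2)-\scal\,d\beta-\beta\,d(\scal).$$
Here I would use the identity $\xi^{\flat}\circ\nabla\xi=\tfrac{1}{2}d(|\xi|^2)$ recorded just before the statement (with $\theta=\xi^{\flat}$) to replace $\delta\,d(|\xi|^2)$ by $2\delta\,\xi^{\flat}\circ\nabla\xi$; this is precisely the term that produces the extra summand distinguishing \eqref{kk2} from \eqref{kk}.

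Equating the two sides and cancelling the common $-\tfrac{\beta}{2}d(\scal)$, I would collect like terms. The coefficient of $d\gamma$ becomes $\tfrac{n}{2}-1=\tfrac{n-2}{2}$, since $\tfrac{n}{2}d\gamma$ comes from $\tfrac{1}{2}d(\div(\xi))$ while $-d\gamma$ comes from \eqref{di}. The terms proportional to $\scal$, namely $-\tfrac{\scal}{2}d\beta$ together with the $+\delta\beta\scal\,\xi^{\flat}$ produced from $\delta\div(\xi)\xi^{\flat}$ after inserting \eqref{tr}, combine and, transferred across the equality, give the left-hand side $\tfrac{\scal}{2}(d\beta-2\beta\delta\xi^{\flat})$. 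Finally, $-\delta\nabla_{\xi}\xi^{\flat}$ and $+\delta\,\xi^{\flat}\circ\nabla\xi$ merge into $-\delta(\nabla_{\xi}\xi^{\flat}-\xi^{\flat}\circ\nabla\xi)$, while the remaining terms in $d\beta$, $d\delta$ and $\xi^{\flat}$ reproduce the rest of \eqref{kk2}. The only real obstacle is bookkeeping: keeping the signs and the $1/\beta$ factors straight through the substitution of \eqref{po}, and cleanly separating the scalar-valued terms from the $\xi^{\flat}$-valued ones so that the scalar-curvature part regroups into the announced left-hand side.
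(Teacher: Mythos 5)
Your proposal is correct and follows essentially the same route as the paper: characterize Schr\"{o}dinger-Ricci harmonicity via \eqref{l2}, expand $\div({\mathcal L}_{\xi}g)$ using \eqref{di}, \eqref{tr} and \eqref{po} exactly as in Proposition \ref{t1}, differentiate \eqref{tr} to get $d(\div(\xi))$, convert $\delta\,d(|\xi|^2)$ into $2\delta\,\xi^{\flat}\circ\nabla\xi$, and collect terms. The bookkeeping you describe (cancellation of $\beta\,d(\scal)$, the coefficient $\tfrac{n-2}{2}$ of $d\gamma$, and the regrouping of the $\scal$-terms into $\tfrac{\scal}{2}(d\beta-2\beta\delta\xi^{\flat})$) matches the paper's computation.
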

\begin{proof}
From (\ref{l2}) we know that $\xi^{\flat}$ is a Schr\"{o}dinger-Ricci harmonic form if and only if $\div({\mathcal L}_{\xi}g)=d(\div(\xi))$, which by means of
(\ref{tr}), (\ref{di}) and (\ref{po}), is equivalent to
\begin{equation}\begin{aligned} &\nonumber
0=-\beta d(\scal)-2i_{Q(\grad(\beta))}g
+2d\gamma+2\xi^{\flat}(\grad(\delta))\xi^{\flat}+2\delta\div(\xi)\xi^{\flat}+2\delta i_{\nabla_{\xi}\xi}g
\\& \hskip.3in -nd\gamma-\delta d(|\xi|^2)-|\xi|^2d\delta +\beta d(\scal)+\scal \cdot d\beta
\\& \hskip.3in =-\frac{2\gamma}{\beta} i_{\grad(\beta)}g-\frac{2\delta}{\beta} \xi^{\flat}(\grad(\beta))i_{\xi}g+
\frac{1}{\beta}\left(i_{\nabla_{\grad(\beta)}\xi}g+d\beta\circ \nabla \xi\right)
\\& \hskip.3in +2d\gamma+2\xi^{\flat}(\grad(\delta))\xi^{\flat}+2\delta(n\gamma+\delta |\xi|^2-\beta \scal)\xi^{\flat}+2\delta i_{\nabla_{\xi}\xi}g
\\& \hskip.3in -nd\gamma-\delta d(|\xi|^2)-|\xi|^2d\delta +\scal \cdot d\beta
\\& \hskip.3in =-\frac{2\gamma}{\beta} d\beta-\frac{2\delta}{\beta} \xi^{\flat}(\grad(\beta))\xi^{\flat}+\frac{1}{\beta}\left(\nabla_{\grad(\beta)}\xi^{\flat}+d\beta\circ \nabla \xi\right)-(n-2)d\gamma
\\& \hskip.3in +2\xi^{\flat}(\grad(\delta))\xi^{\flat}+2\delta(n\gamma+\delta |\xi|^2)\xi^{\flat}+2\delta (\nabla_{\xi}\xi^{\flat}-\xi^{\flat}\circ \nabla \xi)-|\xi|^2d\delta +\scal (d\beta-2\beta\delta \xi^{\flat}).
\end{aligned}\end{equation}
\end{proof}

Proposition \ref{te} implies the following.

\begin{theorem}
Let $(M,g)$ be a Riemannian manifold and $\xi$ a vector field such that its dual $1$-form $\xi^{\flat}$ is a Schr\"{o}dinger-Ricci harmonic form. If $(g,\xi,\gamma,\delta)$ defines a $\xi^{\flat}$-Ricci soliton on $M$, then either the soliton is a Ricci soliton or $\xi$ is a divergence-free vector field.
\end{theorem}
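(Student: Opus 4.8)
The plan is to invoke Proposition~\ref{te} with the defining data of a $\xi^{\flat}$-Ricci soliton, namely $\beta=1$ together with $\gamma$ and $\delta$ constant. Since $\beta\equiv 1$ is nowhere zero, the hypothesis of Proposition~\ref{te} is met, and substituting $d\beta=0$, $\grad(\beta)=0$, $d\gamma=0$, $d\delta=0$ and $\grad(\delta)=0$ annihilates every term in (\ref{kk2}) except the scalar-curvature term on the left and the two $\delta$-terms on the right. First I would record what survives: the left-hand side collapses to $-\scal\,\delta\,\xi^{\flat}$, while the right-hand side reduces to $-\delta(n\gamma+\delta|\xi|^2)\xi^{\flat}-\delta(\nabla_{\xi}\xi^{\flat}-\xi^{\flat}\circ\nabla\xi)$.

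Next I would feed in the trace identity (\ref{tr}), which for $\beta=1$ reads $\div(\xi)+\scal=n\gamma+\delta|\xi|^2$; this is the key algebraic step, since it lets the scalar curvature and the constant $n\gamma+\delta|\xi|^2$ combine into $\div(\xi)$. After this substitution the identity becomes $\delta\bigl(\div(\xi)\,\xi^{\flat}+\nabla_{\xi}\xi^{\flat}-\xi^{\flat}\circ\nabla\xi\bigr)=0$, exactly as in the Schr\"{o}dinger-Ricci-solution analogue of Proposition~\ref{t1} but carrying the extra corrective term $-\xi^{\flat}\circ\nabla\xi$ that is characteristic of the harmonic condition. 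From here the dichotomy is immediate: either $\delta=0$, in which case (\ref{generalsol}) reduces to the Ricci soliton equation, or $\div(\xi)\,\xi^{\flat}+\nabla_{\xi}\xi^{\flat}-\xi^{\flat}\circ\nabla\xi=0$.

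In the latter case I would use the identity $\xi^{\flat}\circ\nabla\xi=\tfrac12 d(|\xi|^2)$ recorded just before Proposition~\ref{te} and evaluate the resulting $1$-form on $\xi$ itself. Because $(\nabla_{\xi}\xi^{\flat})(\xi)=\tfrac12\xi(|\xi|^2)$ and $(\xi^{\flat}\circ\nabla\xi)(\xi)=g(\xi,\nabla_{\xi}\xi)=\tfrac12\xi(|\xi|^2)$, the two derivative contributions cancel and one is left with $\div(\xi)\,|\xi|^2=0$, whence $\div(\xi)=0$ on the open set where $\xi\neq 0$ and, by continuity together with the vanishing of $\xi$ on the interior of its zero set, everywhere on $M$.

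The step I expect to be the only real content is the cancellation in the second paragraph: it is precisely the scalar-curvature term supplied by the harmonic condition (which is absent in the Schr\"{o}dinger-Ricci-solution setting), combined with the trace identity (\ref{tr}), that converts $n\gamma+\delta|\xi|^2-\scal$ into $\div(\xi)$ and thereby removes the need for the constant-length hypothesis required in the earlier theorem following Proposition~\ref{t1}. Everything else is bookkeeping of vanishing terms and a single evaluation at $\xi$.
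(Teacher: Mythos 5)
Your proof is correct and follows essentially the same route as the paper: specialize (\ref{kk2}) to $\beta=1$ with $\gamma,\delta$ constant, use the trace identity (\ref{tr}) to turn $n\gamma+\delta|\xi|^2-\scal$ into $\div(\xi)$ and reduce to $\delta\left(\div(\xi)\xi^{\flat}+\nabla_{\xi}\xi^{\flat}-\xi^{\flat}\circ\nabla\xi\right)=0$, then evaluate at $\xi$ so the two derivative terms cancel. Your treatment of the zero set of $\xi$ is in fact slightly more careful than the paper's, which simply evaluates at $\xi$ and concludes $\div(\xi)=0$.
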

\begin{proof}
Taking into account that $\beta=1$, $\gamma$ and $\delta$ are constant, we obtain
$$\delta\left(\div(\xi)\xi^{\flat}+\nabla_{\xi}\xi^{\flat}-\xi^{\flat}\circ \nabla \xi\right)=0$$
from (\ref{kk2}), which either implies $\delta=0$ or $\div(\xi)\xi^{\flat}=\xi^{\flat}\circ \nabla \xi-\nabla_{\xi}\xi^{\flat}$. Computing the second relation in $\xi$, we find
$$\div(\xi)=0.$$
\end{proof}

\begin{theorem}\label{te44} For every Ricci soliton $(M,g,\xi,\gamma)$, the dual $1$-form $\xi^{\flat}$ of the potential vector field $\xi$ is a Schr\"{o}dinger-Ricci harmonic form.
\end{theorem}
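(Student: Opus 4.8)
The plan is to obtain the result as a direct specialization of Proposition \ref{te}. For a Ricci soliton $(M,g,\xi,\gamma)$ we have, by the conventions fixed in the Preliminaries, $\beta=1$, $\delta=0$, and $\gamma$ a constant. Since $\beta\equiv 1$ is nowhere zero on $M$, the hypotheses of Proposition \ref{te} are satisfied, so $\xi^{\flat}$ is a Schr\"{o}dinger-Ricci harmonic form exactly when the characterizing identity (\ref{kk2}) holds. The whole content of the proof is then to check that (\ref{kk2}) degenerates to a triviality under these values.

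I would next substitute $\beta=1$, $\delta=0$, together with the consequent vanishing $d\beta=0$, $\grad(\beta)=0$, $d\delta=0$, $\grad(\delta)=0$, and $d\gamma=0$, into (\ref{kk2}). The left-hand side $\frac{\scal}{2}(d\beta-2\beta\delta\,\xi^{\flat})$ vanishes since both $d\beta$ and $\delta$ are zero. On the right-hand side the terms $\frac{\gamma}{\beta}d\beta$, $\frac{n-2}{2}d\gamma$, and $\frac{|\xi|^2}{2}d\delta$ vanish because $d\beta=d\gamma=d\delta=0$; the term $-\frac{1}{2\beta}(\nabla_{\grad(\beta)}\xi^{\flat}+d\beta\circ\nabla\xi)$ vanishes because $\grad(\beta)=0$; the summand $-\xi^{\flat}(\grad(\delta))\,\xi^{\flat}$ vanishes because $\grad(\delta)=0$; and each remaining summand carries an explicit factor $\delta$, hence is zero. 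Thus (\ref{kk2}) collapses to the identity $0=0$, which is automatically satisfied, yielding the conclusion.

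Because the characterizing equation degenerates completely, there is essentially no obstacle here; the only thing to be careful about is confirming that every term is killed, either by a vanishing differential or by the factor $\delta=0$. If instead a self-contained argument avoiding Proposition \ref{te} were preferred, I would combine the trace identity (\ref{tr}), which for $\beta=1$, $\delta=0$ gives $\div(\xi)=n\gamma-\scal$ and hence $d(\div(\xi))=-d(\scal)$, with the divergence identity (\ref{di}), which under the same specialization reduces to $\frac{1}{2}\div({\mathcal L}_{\xi}g)+\frac{1}{2}d(\scal)=0$. Comparing these gives $\div({\mathcal L}_{\xi}g)=d(\div(\xi))$, which by the Lemma characterizing (\ref{l2}) is precisely the assertion that $\xi^{\flat}$ is a Schr\"{o}dinger-Ricci harmonic form.
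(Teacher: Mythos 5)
Your proof is correct and follows essentially the same route as the paper: the paper's proof of Theorem \ref{te44} is exactly the one-line observation that substituting $\beta=1$, $\delta=0$, $\gamma$ constant into (\ref{kk2}) makes the characterizing identity hold trivially, which you verify term by term. Your alternative self-contained argument via (\ref{tr}), (\ref{di}) and (\ref{l2}) is also valid, but it merely unpacks the computation already encoded in Proposition \ref{te}, so it is not a genuinely different approach.
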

\begin{proof}
For $\beta=1$, $\delta=0$ and $\gamma$ constant, we deduce that (\ref{kk2}) always holds, hence the conclusion.
\end{proof}

Similarly, we deduce the following proposition from (\ref{tr}), (\ref{di}) for $\beta=0$, and (\ref{l2}).

\begin{proposition}\label{p22}
Let $(M^n, g,\xi,\gamma,\delta)$ be an almost $\xi^{\flat}$-Yamabe soliton.
Then the dual $1$-form $\xi^{\flat}$ of the potential vector field $\xi$ is a Schr\"{o}dinger-Ricci harmonic form if and only if it satisfies
\begin{equation}\label{p2}
\frac{n-2}{2}d\gamma+\frac{|\xi|^2}{2}d\delta-\xi^{\flat}(\grad(\delta))\xi^{\flat}-
\delta(n\gamma+\delta|\xi|^2)\xi^{\flat}-\delta(\nabla_{\xi}\xi^{\flat}-\xi^{\flat}\circ \nabla \xi)=0.
\end{equation}
\end{proposition}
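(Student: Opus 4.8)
The plan is to apply the Schr\"{o}dinger-Ricci harmonic criterion (\ref{l2}) directly to the potential vector field $\xi$ and then unfold both sides using the structural identities (\ref{tr}) and (\ref{di}) specialized to $\beta=0$. This parallels the proof of Proposition \ref{te}, but is simpler because the $\Ric$-term and all the $d\beta$-contributions drop out.

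First I would record that, by (\ref{l2}), the $1$-form $\xi^{\flat}$ is Schr\"{o}dinger-Ricci harmonic if and only if $\div({\mathcal L}_{\xi}g)=d(\div(\xi))$. For the left-hand side I would insert (\ref{di}) with $\beta=0$, which gives $\frac{1}{2}\div({\mathcal L}_{\xi}g)=d\gamma+\xi^{\flat}(\grad(\delta))\xi^{\flat}+\delta\div(\xi)\xi^{\flat}+\delta\, i_{\nabla_{\xi}\xi}g$. For the right-hand side I would use (\ref{tr}) with $\beta=0$, namely $\div(\xi)=n\gamma+\delta|\xi|^2$, and differentiate it to obtain $d(\div(\xi))=n\,d\gamma+|\xi|^2 d\delta+\delta\,d(|\xi|^2)$.

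Next I would substitute $\div(\xi)=n\gamma+\delta|\xi|^2$ back into the left-hand side and equate the two expressions. The key simplification then invokes the three auxiliary identities listed just before Proposition \ref{te}: $d(\div(\theta^{\sharp}))=i_{\grad(\div(\theta^{\sharp}))}g$, $\theta\circ\nabla\theta^{\sharp}=\frac{1}{2}d(|\theta^{\sharp}|^2)$, and $(\nabla_X\theta)^{\sharp}=\nabla_X\theta^{\sharp}$. Applying the second identity with $\theta=\xi^{\flat}$ converts the term $\delta\,d(|\xi|^2)$ into $2\delta\,\xi^{\flat}\circ\nabla\xi$, while the third rewrites $i_{\nabla_{\xi}\xi}g$ as $\nabla_{\xi}\xi^{\flat}$. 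Collecting all terms and dividing by $2$ then yields precisely (\ref{p2}), with the combination $+\delta\,\xi^{\flat}\circ\nabla\xi-\delta\,\nabla_{\xi}\xi^{\flat}$ reassembling into $-\delta(\nabla_{\xi}\xi^{\flat}-\xi^{\flat}\circ\nabla\xi)$.

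The only delicate point I anticipate is the bookkeeping in differentiating $\div(\xi)=n\gamma+\delta|\xi|^2$: it is exactly the $\delta\,d(|\xi|^2)$ contribution that, after conversion via $\xi^{\flat}\circ\nabla\xi=\frac{1}{2}d(|\xi|^2)$, supplies the $\xi^{\flat}\circ\nabla\xi$ term hidden inside (\ref{p2}). Keeping careful track of this term, and of how the $n\,d\gamma$ arising from $d(\div(\xi))$ combines with the $2\,d\gamma$ coming from $\div({\mathcal L}_{\xi}g)$ to produce the coefficient $\frac{n-2}{2}$, is where the computation must be done with care; everything else is a routine rearrangement.
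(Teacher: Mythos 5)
Your proposal is correct and follows essentially the same route as the paper's own proof: apply the criterion \eqref{l2}, expand $\div({\mathcal L}_{\xi}g)$ via \eqref{di} with $\beta=0$, differentiate \eqref{tr} with $\beta=0$ to get $d(\div(\xi))$, and use $\xi^{\flat}\circ\nabla\xi=\frac{1}{2}d(|\xi|^2)$ and $i_{\nabla_{\xi}\xi}g=\nabla_{\xi}\xi^{\flat}$ to collect terms into \eqref{p2}. The only cosmetic difference is that the final normalization is a multiplication by $-\frac{1}{2}$ rather than a division by $2$, which does not affect the argument.
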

\begin{proof}
From (\ref{l2}) we know that $\xi^{\flat}$ is a Schr\"{o}dinger-Ricci harmonic form if and only if $\div({\mathcal L}_{\xi}g)=d(\div(\xi))$, which by means of
(\ref{tr}) and (\ref{di}) for $\beta=0$, is equivalent to
\begin{equation}\begin{aligned} &\nonumber
0=2d\gamma+2\xi^{\flat}(\grad(\delta))\xi^{\flat}+2\delta\div(\xi)\xi^{\flat}+2\delta i_{\nabla_{\xi}\xi}g
-nd\gamma-\delta d(|\xi|^2)-|\xi|^2d\delta
\\& \hskip.3in =2d\gamma+2\xi^{\flat}(\grad(\delta))\xi^{\flat}+2\delta(n\gamma+\delta|\xi|^2)\xi^{\flat}+2\delta \nabla_{\xi}\xi^{\flat}
-nd\gamma-2\delta \xi^{\flat}\circ \nabla \xi-|\xi|^2d\delta.
\end{aligned}\end{equation}
\end{proof}

We immediately obtain the followings from Proposition \ref{p22}.

\begin{theorem}
If $(M,g,\xi,\gamma,\delta)$ is a $\xi^{\flat}$-Yamabe soliton such that the dual $1$-form $\xi^{\flat}$ of the potential vector field $\xi$ is a Schr\"{o}dinger-Ricci harmonic form, then either the soliton is a generalized Yamabe soliton or $\xi$ is a divergence-free vector field.
\end{theorem}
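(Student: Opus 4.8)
The plan is to specialize Proposition \ref{p22} to the case of a $\xi^{\flat}$-Yamabe soliton, for which $\beta=0$ and both $\gamma$ and $\delta$ are constant by definition. First I would substitute this constancy into the characterizing identity \eqref{p2}: since $d\gamma=0$, $d\delta=0$ and $\grad(\delta)=0$, every term carrying a derivative of $\gamma$ or $\delta$ vanishes, and \eqref{p2} collapses to
$$
-\delta(n\gamma+\delta|\xi|^2)\xi^{\flat}-\delta(\nabla_{\xi}\xi^{\flat}-\xi^{\flat}\circ \nabla \xi)=0.
$$

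Next I would feed in the trace relation \eqref{tr}, which for $\beta=0$ reads $\div(\xi)=n\gamma+\delta|\xi|^2$, to rewrite $n\gamma+\delta|\xi|^2$ as $\div(\xi)$. This turns the preceding identity into
$$
\delta\left(\div(\xi)\,\xi^{\flat}+\nabla_{\xi}\xi^{\flat}-\xi^{\flat}\circ \nabla \xi\right)=0,
$$
in perfect analogy with the Ricci-soliton computation performed just before Theorem \ref{te44}. The dichotomy is now immediate: either $\delta=0$, whence $\beta=\delta=0$ with $\gamma$ constant so that $(g,\xi,\gamma)$ is by definition a generalized Yamabe soliton; or $\delta\neq0$ and consequently $\div(\xi)\,\xi^{\flat}=\xi^{\flat}\circ \nabla \xi-\nabla_{\xi}\xi^{\flat}$.

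To dispose of the second alternative I would evaluate this vector-valued $1$-form identity on $\xi$ itself. Using $\xi^{\flat}(\xi)=|\xi|^2$, the relation $\xi^{\flat}\circ\nabla\xi=\frac{1}{2} d(|\xi|^2)$ recorded just before Proposition \ref{te}, and the product rule $(\nabla_{\xi}\xi^{\flat})(\xi)=\xi(|\xi|^2)-g(\xi,\nabla_{\xi}\xi)=\frac{1}{2}\xi(|\xi|^2)$, the two derivative terms on the right cancel, so the identity reduces to $\div(\xi)\,|\xi|^2=0$ and hence $\div(\xi)=0$. I expect the one step needing genuine care to be this final evaluation, where one must verify that $(\xi^{\flat}\circ\nabla\xi)(\xi)$ and $(\nabla_{\xi}\xi^{\flat})(\xi)$ coincide and therefore annihilate one another; everything else is a routine specialization of Proposition \ref{p22} combined with the trace identity \eqref{tr}.
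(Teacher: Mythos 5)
Your proposal is correct and follows essentially the same route as the paper: specialize \eqref{p2} using the constancy of $\gamma$ and $\delta$ together with the trace identity \eqref{tr} to reach $\delta\left(\div(\xi)\xi^{\flat}+\nabla_{\xi}\xi^{\flat}-\xi^{\flat}\circ \nabla \xi\right)=0$, then split on $\delta=0$ versus evaluating the residual identity at $\xi$. The only difference is that you make explicit the verification $(\nabla_{\xi}\xi^{\flat})(\xi)=(\xi^{\flat}\circ \nabla \xi)(\xi)=\tfrac{1}{2}\xi(|\xi|^2)$, which the paper leaves implicit.
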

\begin{proof}
Taking into account that $\gamma$ and $\delta$ are constant, we obtain
$$\delta\left(\div(\xi)\xi^{\flat}+\nabla_{\xi}\xi^{\flat}-\xi^{\flat}\circ \nabla \xi\right)=0$$
from (\ref{p2}), which either implies $\delta=0$ or $\div(\xi)\xi^{\flat}=\xi^{\flat}\circ \nabla \xi-\nabla_{\xi}\xi^{\flat}$. Computing the second relation in $\xi$, we find
$$\div(\xi)=0.$$
\end{proof}

\begin{theorem}\label{c:4.14}
Let $(M^n,g)$ be a complete Riemannian manifold, $n>2$.
If $(M^n,g,\xi,\gamma)$ is an almost generalized Yamabe soliton, then the dual $1$-form $\xi^{\flat}$ of the potential vector field $\xi$ is a Schr\"{o}dinger-Ricci harmonic form if and only if the soliton is a generalized Yamabe soliton.
\end{theorem}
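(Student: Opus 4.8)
The plan is to obtain Theorem \ref{c:4.14} as an immediate specialization of Proposition \ref{p22}. By definition an almost generalized Yamabe soliton is the case $\beta=\delta=0$ of the defining equation (\ref{generalsol}); in particular it is an almost $\xi^{\flat}$-Yamabe soliton (the case $\beta=0$) with the additional restriction $\delta\equiv 0$. Hence Proposition \ref{p22}, and its characterizing identity (\ref{p2}), applies without modification, and the whole task reduces to reading off what (\ref{p2}) says when $\delta\equiv 0$.

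First I would substitute $\delta=0$ into (\ref{p2}). Every summand there except the leading one carries an explicit factor of $\delta$: this is visible in the terms $\tfrac{|\xi|^2}{2}d\delta$, $\xi^{\flat}(\grad(\delta))\xi^{\flat}$, $\delta(n\gamma+\delta|\xi|^2)\xi^{\flat}$, and $\delta(\nabla_{\xi}\xi^{\flat}-\xi^{\flat}\circ\nabla\xi)$, all of which vanish identically once $\delta\equiv 0$. What survives is simply
$$
\frac{n-2}{2}\,d\gamma=0.
$$
Because the hypothesis is $n>2$, the scalar factor $(n-2)/2$ is nowhere zero, so this surviving equation is equivalent to $d\gamma=0$. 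On the (connected) manifold $M$ this is equivalent to $\gamma$ being constant, which is exactly the definition of a generalized Yamabe soliton. Conversely, if the soliton is a generalized Yamabe soliton then $\gamma$ is constant, so $d\gamma=0$ and (\ref{p2}) is satisfied with $\delta=0$, whence $\xi^{\flat}$ is Schr\"{o}dinger-Ricci harmonic. This closes the equivalence and mirrors exactly the proof of the companion Theorem \ref{c:4.7}.

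There is no genuine analytic obstacle here; the only step that requires attention—and the only place where the hypotheses are actually used—is the dimensional restriction $n>2$. In the harmonicity setting the condition produces $d\gamma$ with coefficient $(n-2)/2$ rather than with coefficient $1$, so the reduction to $d\gamma=0$ can fail precisely when $n=2$. This is the structural difference from Theorem \ref{c:4.7}, whose Schr\"{o}dinger-Ricci solution condition (\ref{p1}) yields $d\gamma$ directly and therefore imposes no constraint on $n$. Everything else amounts to bookkeeping of which terms of (\ref{p2}) are proportional to $\delta$.
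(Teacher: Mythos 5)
Your argument is correct and coincides with the paper's own proof: both specialize the identity (\ref{p2}) of Proposition \ref{p22} to $\delta=0$, reduce it to $(n-2)\,d\gamma=0$, and invoke $n>2$ to conclude $d\gamma=0$, i.e.\ that $\gamma$ is constant. Your additional remarks on where the hypothesis $n>2$ enters, and on the contrast with Theorem \ref{c:4.7}, are accurate but not a different method.
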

\begin{proof}
For $\delta=0$, we deduce that (\ref{p2}) holds if and only if $(n-2)d\gamma=0$, hence the conclusion.
\end{proof}

\begin{theorem}\label{tp}
For every generalized Yamabe soliton $(M,g,\xi,\gamma)$, the dual $1$-form $\xi^{\flat}$ of the potential vector field $\xi$ is a Schr\"{o}dinger-Ricci harmonic form.
\end{theorem}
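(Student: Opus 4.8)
The plan is to recognize the statement as the $\delta\equiv 0$, $\gamma$ constant specialization of Proposition~\ref{p22}, and then to check that equation~\eqref{p2} collapses to a trivial identity. First I would record that, by definition, a generalized Yamabe soliton $(M,g,\xi,\gamma)$ is the case $\beta=\delta=0$ of~\eqref{generalsol} with $\gamma$ a constant; in particular it is an almost $\xi^{\flat}$-Yamabe soliton, so Proposition~\ref{p22} applies and tells me that $\xi^{\flat}$ is a Schr\"{o}dinger-Ricci harmonic form if and only if~\eqref{p2} holds.

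Next I would substitute $\delta\equiv 0$ into~\eqref{p2}. Each of the four terms carrying an explicit factor of $\delta$, namely $\xi^{\flat}(\grad(\delta))\xi^{\flat}$, $\delta(n\gamma+\delta|\xi|^2)\xi^{\flat}$ and $\delta(\nabla_{\xi}\xi^{\flat}-\xi^{\flat}\circ\nabla\xi)$, vanishes identically; moreover, since $\delta$ is the zero function we also have $d\delta=0$, so the term $\tfrac{|\xi|^2}{2}d\delta$ drops out as well. What survives is the single scalar identity $\tfrac{n-2}{2}\,d\gamma=0$.

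Finally, because $\gamma$ is constant for a genuine (non-almost) generalized Yamabe soliton, we have $d\gamma=0$, and therefore the surviving identity holds automatically, in every dimension $n$. Hence~\eqref{p2} is satisfied, and Proposition~\ref{p22} yields at once that $\xi^{\flat}$ is a Schr\"{o}dinger-Ricci harmonic form, which is the assertion.

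There is essentially no analytic obstacle in this argument; the only point worth flagging is a structural one. In contrast to Theorem~\ref{c:4.14}, where $\gamma$ is merely a function and one must invoke both completeness and $n>2$ to pass from $(n-2)d\gamma=0$ to $d\gamma=0$, here the constancy of $\gamma$ is built into the definition and annihilates $d\gamma$ directly. Consequently no completeness or dimensional hypothesis is required, and I would present the proof as a one-line reduction to Proposition~\ref{p22} rather than re-deriving~\eqref{p2} from~\eqref{tr} and~\eqref{di}.
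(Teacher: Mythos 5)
Your proof is correct and follows essentially the same route as the paper, which obtains Theorem~\ref{tp} directly from Proposition~\ref{p22} by setting $\delta=0$ and using the constancy of $\gamma$ to kill the surviving term $\tfrac{n-2}{2}\,d\gamma$ in~\eqref{p2}. Your observation that, unlike Theorem~\ref{c:4.14}, no completeness or dimension hypothesis is needed here is accurate and matches the paper's treatment.
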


\subsection{Harmonic forms}\label{Subection4.3}

A $1$-form $\theta$ on a Riemannian manifold $(M,g)$ is called \textit{harmonic} if it satisfies $$\Delta(\theta)=0.$$

\smallskip

We can notice that if $\theta^{\sharp}\in\ker Q$, then $\theta$ is Schr\"{o}dinger-Ricci harmonic if and only if it is harmonic.

\begin{lemma}
A $1$-form $\theta$ on a Riemannian manifold $(M,g)$ is a harmonic form if and only if the dual vector field $\theta^{\sharp}$ of $\theta$ satisfies
\begin{equation}\label{l3}
\div({\mathcal L}_{\theta^{\sharp}}g)=\Ric_{\sharp}(\theta)+d(\div(\theta^{\sharp})).
\end{equation}
\end{lemma}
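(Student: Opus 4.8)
The plan is to obtain the stated equivalence as an immediate consequence of the known identity \eqref{e61}, in exactly the same spirit as the proofs of the two preceding lemmas in this subsection. The key observation is that \eqref{e61} expresses $\div({\mathcal L}_Xg)$ as the sum of three terms, $(\Delta+\Ric_{\sharp})(X^{\flat})$ and $d(\div(X))$, and each of the three harmonicity-type conditions considered here corresponds to setting a particular combination of these terms to zero.

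First I would specialize \eqref{e61} to the vector field $X:=\theta^{\sharp}$. Since $\flat$ and $\sharp$ are mutually inverse musical isomorphisms, we have $X^{\flat}=(\theta^{\sharp})^{\flat}=\theta$, so \eqref{e61} becomes
$$\div({\mathcal L}_{\theta^{\sharp}}g)=(\Delta+\Ric_{\sharp})(\theta)+d(\div(\theta^{\sharp})).$$
Next I would isolate the Laplace--Hodge term by rearranging this identity into
$$\Delta(\theta)=\div({\mathcal L}_{\theta^{\sharp}}g)-\Ric_{\sharp}(\theta)-d(\div(\theta^{\sharp})).$$
Because $\theta$ is harmonic precisely when $\Delta(\theta)=0$, the right-hand side vanishes if and only if $\theta$ is harmonic, and vanishing of the right-hand side is exactly condition \eqref{l3}. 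This establishes both directions of the equivalence simultaneously.

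I do not anticipate any real obstacle: the statement is a one-line rearrangement of \eqref{e61}, and it completes the natural trichotomy begun in the earlier lemmas, where the solution of the Schr\"{o}dinger--Ricci equation corresponds to $\div({\mathcal L}_{\theta^{\sharp}}g)=0$, the Schr\"{o}dinger--Ricci harmonic condition to $\div({\mathcal L}_{\theta^{\sharp}}g)=d(\div(\theta^{\sharp}))$, and now ordinary harmonicity to $\div({\mathcal L}_{\theta^{\sharp}}g)=\Ric_{\sharp}(\theta)+d(\div(\theta^{\sharp}))$. The only point requiring minimal care is the bookkeeping of signs and of which terms are transposed across the equality; no computation beyond \eqref{e61} and the relation $(\theta^{\sharp})^{\flat}=\theta$ is involved.
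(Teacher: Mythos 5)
Your proposal is correct and follows exactly the route the paper intends: the lemma is stated there without an explicit proof, being an immediate consequence of the identity \eqref{e61} applied to $X=\theta^{\sharp}$ (so that $X^{\flat}=\theta$) together with the definition $\Delta(\theta)=0$ of harmonicity. Your rearrangement and the observation that this completes the trichotomy with the two preceding lemmas are precisely the intended one-line argument.
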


Since
$$\Ric_{\sharp}(\theta)=i_{Q(\theta^{\sharp})}g, \ \ d(\div(\theta^{\sharp}))=i_{\grad(\div(\theta^{\sharp}))}g, \ \ \theta\circ \nabla \theta^{\sharp}=\frac{1}{2}d(|\theta^{\sharp}|^2), \ \ (\nabla_X\theta)^{\sharp}=\nabla_X\theta^{\sharp},$$
we deduce the following proposition from (\ref{tr}), (\ref{di}), (\ref{po}), (\ref{pin}) and (\ref{l3}).

\begin{proposition}\label{teor} Let $(M^n,g,\xi,\beta,\gamma,\delta)$ be a generalized soliton such that $\beta$ is nowhere zero on $M$. Then the dual $1$-form $\xi^{\flat}$ of the potential vector field $\xi$ is a harmonic form if and only if it satisfies
$$
\frac{\scal}{2}(d\beta-2\beta\delta \xi^{\flat})=\frac{\gamma}{\beta} d\beta+\frac{n-2}{2}d\gamma+\frac{|\xi|^2}{2}d\delta-\frac{1}{2\beta}(\nabla_{\grad(\beta)}\xi^{\flat}+d\beta\circ \nabla \xi)$$$$+\left(\frac{\delta}{\beta}\xi^{\flat}(\grad(\beta))-\xi^{\flat}(\grad(\delta))\right)\xi^{\flat}-
\frac{(2n\beta\delta-1)\gamma+\delta(2\beta\delta-1)|\xi|^2}{2\beta}\xi^{\flat}$$
\begin{equation}\label{kk3}
-\left(\delta+\frac{1}{4\beta}\right)\nabla_{\xi}\xi^{\flat}+
\left(\delta-\frac{1}{4\beta}\right)\xi^{\flat}\circ \nabla \xi.
\end{equation}
\end{proposition}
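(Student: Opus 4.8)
The plan is to follow the proof of Proposition~\ref{te} almost verbatim, the only new ingredient being the extra term $\Ric_{\sharp}(\xi^{\flat})$ that the harmonic characterization carries relative to the Schr\"odinger-Ricci harmonic one. By the Lemma preceding the statement, $\xi^{\flat}$ is harmonic if and only if $\div(\mathcal{L}_{\xi}g)=\Ric_{\sharp}(\xi^{\flat})+d(\div(\xi))$, which is exactly the Schr\"odinger-Ricci harmonic equation $\div(\mathcal{L}_{\xi}g)=d(\div(\xi))$ with the single additional summand $\Ric_{\sharp}(\xi^{\flat})$ on the right. Hence I expect the identity \eqref{kk3} to coincide with \eqref{kk2} once the contribution of this one extra term is incorporated, and the forward and backward directions to follow simultaneously since the whole argument is an equivalent rearrangement.

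First I would reproduce the computation in the proof of Proposition~\ref{te}: substitute the expression for $\div(\mathcal{L}_{\xi}g)$ coming from \eqref{di}, replace $\div(\xi)$ by $n\gamma+\delta|\xi|^{2}-\beta\scal$ via \eqref{tr}, and expand $i_{Q(\grad(\beta))}g$ through \eqref{po}. Using the identities $d(\div(\xi))=i_{\grad(\div(\xi))}g$, $\xi^{\flat}\circ\nabla\xi=\tfrac12 d(|\xi|^{2})$ and $\nabla_{X}\xi^{\flat}=i_{\nabla_{X}\xi}g$ recorded before the statement, every contraction is rewritten in the $\nabla_{\xi}\xi^{\flat}$, $\nabla_{\grad(\beta)}\xi^{\flat}$, $d\beta\circ\nabla\xi$, $\xi^{\flat}\circ\nabla\xi$ notation. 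This produces the same intermediate vanishing expression as in Proposition~\ref{te}, carrying the term $\scal\,(d\beta-2\beta\delta\xi^{\flat})$.

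Next I would treat the new term $\Ric_{\sharp}(\xi^{\flat})=i_{Q\xi}g$, using $\Ric_{\sharp}(\theta)=i_{Q(\theta^{\sharp})}g$ together with \eqref{pin}, which gives $i_{Q\xi}g=\tfrac1\beta\bigl[(\gamma+\delta|\xi|^{2})\xi^{\flat}-\tfrac12(\nabla_{\xi}\xi^{\flat}+\xi^{\flat}\circ\nabla\xi)\bigr]$. Subtracting this from the intermediate expression, moving the scalar-curvature term $\scal\,(d\beta-2\beta\delta\xi^{\flat})$ to the left and dividing by $-2$, the right-hand side of \eqref{kk2} acquires exactly the extra summand $\tfrac12\Ric_{\sharp}(\xi^{\flat})$. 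Concretely, its piece $\tfrac{1}{2\beta}(\gamma+\delta|\xi|^{2})\xi^{\flat}$ merges with the term $-\delta(n\gamma+\delta|\xi|^{2})\xi^{\flat}$ already present into $-\tfrac{(2n\beta\delta-1)\gamma+\delta(2\beta\delta-1)|\xi|^{2}}{2\beta}\xi^{\flat}$, while its piece $-\tfrac{1}{4\beta}(\nabla_{\xi}\xi^{\flat}+\xi^{\flat}\circ\nabla\xi)$ combines with $-\delta(\nabla_{\xi}\xi^{\flat}-\xi^{\flat}\circ\nabla\xi)$ to yield the coefficients $-(\delta+\tfrac{1}{4\beta})$ on $\nabla_{\xi}\xi^{\flat}$ and $(\delta-\tfrac{1}{4\beta})$ on $\xi^{\flat}\circ\nabla\xi$, which is precisely \eqref{kk3}.

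The main obstacle is purely bookkeeping: one must keep the two directional objects $\nabla_{\xi}\xi^{\flat}$ and $\xi^{\flat}\circ\nabla\xi$ strictly separate throughout, since $\nabla\xi$ need not be symmetric, and verify that after adding the $\tfrac12\Ric_{\sharp}(\xi^{\flat})$ contribution the coefficients of $\gamma\xi^{\flat}$ and $\delta|\xi|^{2}\xi^{\flat}$ collapse into the single fraction displayed in \eqref{kk3}. No geometric hypothesis beyond \eqref{tr}--\eqref{pin} and the musical identities is needed; each step is an equivalence, so \eqref{kk3} characterizes harmonicity of $\xi^{\flat}$ exactly.
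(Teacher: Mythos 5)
Your proposal is correct and follows essentially the same route as the paper: the paper's proof is exactly the computation of Proposition \ref{te} with the additional term $i_{Q\xi}g$ inserted and expanded via \eqref{pin}, then rearranged, and your coefficient bookkeeping (the merged fraction on $\xi^{\flat}$ and the coefficients $-(\delta+\tfrac{1}{4\beta})$, $\delta-\tfrac{1}{4\beta}$) checks out. Nothing further is needed.
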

\begin{proof}
From (\ref{l3}) we know that $\xi^{\flat}$ is a harmonic form if and only if $\div({\mathcal L}_{\xi}g)=\Ric_{\sharp}(\xi^{\flat})+d(\div(\xi))$, which by means of
(\ref{tr}), (\ref{di}), (\ref{po}) and (\ref{pin}), is equivalent to
\begin{equation}\begin{aligned} &\nonumber
0=-\beta d(\scal)-2i_{Q(\grad(\beta))}g
+2d\gamma+2\xi^{\flat}(\grad(\delta))\xi^{\flat}+2\delta\div(\xi)\xi^{\flat}+2\delta i_{\nabla_{\xi}\xi}g
\\& \hskip.3in -i_{Q\xi}g-nd\gamma-\delta d(|\xi|^2)-|\xi|^2d\delta +\beta d(\scal)+\scal \cdot d\beta
\\& \hskip.3in =-\frac{2\gamma}{\beta} i_{\grad(\beta)}g-\frac{2\delta}{\beta} \xi^{\flat}(\grad(\beta))i_{\xi}g+
\frac{1}{\beta}\left(i_{\nabla_{\grad(\beta)}\xi}g+d\beta\circ \nabla \xi\right)
\\& \hskip.3in +2d\gamma+2\xi^{\flat}(\grad(\delta))\xi^{\flat}+2\delta(n\gamma+\delta |\xi|^2-\beta \scal)\xi^{\flat}+2\delta i_{\nabla_{\xi}\xi}g
\\& \hskip.3in -\frac{1}{\beta}(\gamma+\delta|\xi|^2)\xi^{\flat}+\frac{1}{2\beta}(i_{\nabla_{\xi}\xi}g+\xi^{\flat}\circ \nabla \xi)-nd\gamma-\delta d(|\xi|^2)-|\xi|^2d\delta +\scal \cdot d\beta
\\& \hskip.3in =-\frac{2\gamma}{\beta} d\beta-\frac{2\delta}{\beta} \xi^{\flat}(\grad(\beta))\xi^{\flat}+\frac{1}{\beta}\left(\nabla_{\grad(\beta)}\xi^{\flat}+d\beta\circ \nabla \xi\right)-(n-2)d\gamma
\\& \hskip.3in +2\xi^{\flat}(\grad(\delta))\xi^{\flat}+\Big(\Big(2n\delta-\frac{1}{\beta}\Big)\gamma+\Big(2\delta-\frac{1}{\beta}\Big) \delta|\xi|^2\Big)\xi^{\flat}
\\& \hskip.3in +\left(2\delta+\frac{1}{2\beta}\right) \nabla_{\xi}\xi^{\flat}-\left(2\delta-\frac{1}{2\beta}\right) \xi^{\flat}\circ \nabla \xi-|\xi|^2d\delta +\scal (d\beta-2\beta\delta \xi^{\flat}).
\end{aligned}\end{equation}
\end{proof}

Proposition \ref{teor} implies the following.

\begin{theorem} \label{te1}
Let $(M,g,\xi,\gamma)$ be a Ricci soliton whose potential vector field $\xi$ is of constant length. If the dual $1$-form $\xi^{\flat}$ of $\xi$ is a harmonic form, then the Ricci soliton is steady.
\end{theorem}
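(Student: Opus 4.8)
The plan is to specialize the harmonicity criterion of Proposition \ref{teor} to the Ricci soliton case and then extract the consequence of the constant-length hypothesis. A Ricci soliton $(M,g,\xi,\gamma)$ is the generalized soliton with $\beta=1$, $\delta=0$ and $\gamma$ constant, so in particular $d\beta=d\delta=d\gamma=0$ and $\grad(\beta)=\grad(\delta)=0$. I would substitute these values directly into (\ref{kk3}). Every term carrying $d\beta$, $d\gamma$, $d\delta$, $\grad(\beta)$ or $\grad(\delta)$ then drops out, the left-hand side $\tfrac{\scal}{2}(d\beta-2\beta\delta\,\xi^{\flat})$ vanishes, and the coefficient $-\tfrac{(2n\beta\delta-1)\gamma+\delta(2\beta\delta-1)|\xi|^2}{2\beta}\xi^{\flat}$ collapses to $+\tfrac{\gamma}{2}\xi^{\flat}$. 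The surviving relation should be
\[
0=\frac{\gamma}{2}\xi^{\flat}-\frac{1}{4}\nabla_{\xi}\xi^{\flat}-\frac{1}{4}\,\xi^{\flat}\circ\nabla\xi .
\]

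Next I would invoke the constant-length hypothesis together with the two identities recorded just before Proposition \ref{teor}. Since $\theta\circ\nabla\theta^{\sharp}=\tfrac12 d(|\theta^{\sharp}|^2)$ with $\theta=\xi^{\flat}$, we get $\xi^{\flat}\circ\nabla\xi=\tfrac12 d(|\xi|^2)=0$ because $|\xi|$ is constant; and $(\nabla_X\theta)^{\sharp}=\nabla_X\theta^{\sharp}$ gives $\nabla_{\xi}\xi^{\flat}=(\nabla_{\xi}\xi)^{\flat}$. Inserting these into the displayed equation and multiplying by $4$ reduces it to the clean identity $\nabla_{\xi}\xi^{\flat}=2\gamma\,\xi^{\flat}$, i.e. $(\nabla_{\xi}\xi)^{\flat}=2\gamma\,\xi^{\flat}$.

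Finally I would evaluate this $1$-form identity on the vector field $\xi$ itself. The left-hand side yields $g(\nabla_{\xi}\xi,\xi)=\tfrac12\xi(|\xi|^2)=0$, again by constant length, while the right-hand side yields $2\gamma\,\xi^{\flat}(\xi)=2\gamma|\xi|^2$. Hence $\gamma|\xi|^2=0$, so that wherever $\xi$ is nonzero one must have $\gamma=0$; as $\gamma$ is constant and $\xi$ has nonzero constant length, this forces $\gamma=0$ and the soliton is steady.

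The argument is essentially a sequence of routine substitutions, and the only place requiring genuine care is the bookkeeping that turns (\ref{kk3}) into the three-term relation above — in particular checking that the $\gamma$-coefficient does not cancel but survives with the factor $\tfrac12$, so that the contraction with $\xi$ produces exactly $\gamma|\xi|^2=0$. Once that relation is in hand, the constant-length reductions $\xi^{\flat}\circ\nabla\xi=0$ and $g(\nabla_{\xi}\xi,\xi)=0$ are immediate, and pairing the reduced equation with $\xi$ is the decisive step that isolates $\gamma=0$.
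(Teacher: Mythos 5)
Your proposal is correct and follows essentially the same route as the paper: specialize (\ref{kk3}) with $\beta=1$, $\delta=0$, $\gamma$ constant to obtain $\gamma\,\xi^{\flat}=\tfrac12(\nabla_{\xi}\xi^{\flat}+\xi^{\flat}\circ\nabla\xi)$, then contract with $\xi$ and use the constant-length hypothesis to force $\gamma=0$. The only (immaterial) difference is that you kill the $\xi^{\flat}\circ\nabla\xi$ term via $\tfrac12 d(|\xi|^2)=0$ before contracting, while the paper contracts first; you are also slightly more careful in noting that the final division requires $|\xi|\neq 0$.
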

\begin{proof}
For $\beta=1$, $\delta=0$ and $\gamma$ constant, we obtain
$$\gamma  \xi^{\flat}=\frac{1}{2}\left(\nabla_{\xi}\xi^{\flat}+\xi^{\flat}\circ \nabla \xi\right).$$
from (\ref{kk3}). Computing this relation in $\xi$, we find
$$\gamma=\frac{\xi(|\xi|^2)}{2|\xi|^2}=0.$$
\end{proof}

Similarly, we deduce the following proposition from (\ref{tr}), (\ref{di}) for $\beta=0$, and (\ref{l3}).

\begin{proposition}\label{p33}
Let $(M^n, g,\xi,\gamma,\delta)$ be an almost $\xi^{\flat}$-Yamabe soliton.
Then the dual $1$-form $\xi^{\flat}$ of the potential vector field $\xi$ is a harmonic form if and only if it satisfies
$$Q\xi=-(n-2)\grad(\gamma)-|\xi|^2\grad(\delta)+2\xi^{\flat}(\grad(\delta))\xi
$$
\begin{equation}\label{p4}
+2\delta\left(\nabla_{\xi}\xi-\frac{1}{2}\grad(|\xi|^2)+(n\gamma+\delta|\xi|^2)\xi\right).
\end{equation}
\end{proposition}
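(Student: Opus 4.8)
The plan is to specialize the harmonic-form criterion \eqref{l3} to the Yamabe case $\beta=0$ and then to raise indices with $\sharp$, since the conclusion \eqref{p4} is phrased in terms of the vector field $Q\xi$ rather than the $1$-form $(Q\xi)^{\flat}$. By the Lemma preceding \eqref{l3}, the form $\xi^{\flat}$ is harmonic if and only if
$$\div({\mathcal L}_{\xi}g)=\Ric_{\sharp}(\xi^{\flat})+d(\div(\xi)),$$
so I would first give each of the three terms explicitly when $\beta=0$. Reading off \eqref{di} with $\beta=0$ yields
$$\div({\mathcal L}_{\xi}g)=2\,d\gamma+2\xi^{\flat}(\grad(\delta))\xi^{\flat}+2\delta\div(\xi)\xi^{\flat}+2\delta\,i_{\nabla_{\xi}\xi}g,$$
while the trace identity \eqref{tr} with $\beta=0$ gives $\div(\xi)=n\gamma+\delta|\xi|^2$.

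Next I would substitute $\div(\xi)=n\gamma+\delta|\xi|^2$ into the line above to eliminate $\div(\xi)$, and differentiate it to obtain $d(\div(\xi))=n\,d\gamma+|\xi|^2\,d\delta+\delta\,d(|\xi|^2)$. Using $\Ric_{\sharp}(\xi^{\flat})=i_{Q\xi}g$, all three ingredients go into the criterion. The crucial structural point is that, because $\beta=0$, the relation \eqref{pin} carries no information about $Q\xi$ (its entire left-hand side vanishes), so the Ricci term cannot be absorbed as it was in Proposition \ref{teor}; this is precisely why $Q\xi$ survives on its own in the conclusion, in contrast with the Schr\"odinger--Ricci harmonic case \eqref{p2} of Proposition \ref{p22}. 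Isolating $i_{Q\xi}g$ from the resulting identity of $1$-forms gives
$$i_{Q\xi}g=-(n-2)\,d\gamma-|\xi|^2\,d\delta+2\xi^{\flat}(\grad(\delta))\xi^{\flat}+2\delta(n\gamma+\delta|\xi|^2)\xi^{\flat}+2\delta\,i_{\nabla_{\xi}\xi}g-\delta\,d(|\xi|^2),$$
where the $n\,d\gamma$ coming from $d(\div(\xi))$ combines with the $2\,d\gamma$ from $\div({\mathcal L}_{\xi}g)$ to produce the net coefficient $-(n-2)$.

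Finally I would apply $\sharp$ termwise, using $(d\gamma)^{\sharp}=\grad(\gamma)$, $(d\delta)^{\sharp}=\grad(\delta)$, $(i_{\nabla_{\xi}\xi}g)^{\sharp}=\nabla_{\xi}\xi$, $(d(|\xi|^2))^{\sharp}=\grad(|\xi|^2)$ and $(\xi^{\flat})^{\sharp}=\xi$, and then group the three $\delta$-terms $2\delta(n\gamma+\delta|\xi|^2)\xi+2\delta\nabla_{\xi}\xi-\delta\grad(|\xi|^2)$ as $2\delta\big(\nabla_{\xi}\xi-\tfrac12\grad(|\xi|^2)+(n\gamma+\delta|\xi|^2)\xi\big)$, which reproduces \eqref{p4}. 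Since every manipulation is an equivalence, the stated condition is both necessary and sufficient. I expect no genuine obstacle: the argument is routine bookkeeping along the template of Proposition \ref{teor}, and the only steps demanding care are the correct $\beta=0$ specializations of \eqref{di} and \eqref{tr} and tracking the two separate sources of $d\gamma$ so that the coefficient comes out as $-(n-2)$ rather than $-n$ or $2$.
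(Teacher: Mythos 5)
Your proposal is correct and follows essentially the same route as the paper: specialize \eqref{l3} with $\div({\mathcal L}_{\xi}g)$ from \eqref{di} at $\beta=0$, use $\div(\xi)=n\gamma+\delta|\xi|^2$ from \eqref{tr} both to eliminate $\div(\xi)$ and to compute $d(\div(\xi))$, isolate $i_{Q\xi}g$, and raise indices. Your observation that \eqref{pin} is vacuous for $\beta=0$ (so $Q\xi$ cannot be eliminated and must remain in the conclusion) correctly explains why the paper, despite citing \eqref{pin}, leaves the term $i_{Q\xi}g$ untouched in its computation.
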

\begin{proof}
From (\ref{l3}) we know that $\xi^{\flat}$ is a harmonic form if and only if $\div({\mathcal L}_{\xi}g)=\Ric_{\sharp}(\xi^{\flat})+d(\div(\xi))$, which by means of
(\ref{tr}), (\ref{di}) for $\beta=0$, and (\ref{pin}), is equivalent to
\begin{equation}\begin{aligned} &\nonumber
0=2d\gamma+2\xi^{\flat}(\grad(\delta))\xi^{\flat}+2\delta\div(\xi)\xi^{\flat}+2\delta i_{\nabla_{\xi}\xi}g-i_{Q\xi}g-nd\gamma-\delta d(|\xi|^2)-|\xi|^2d\delta
\\& \hskip.3in =2i_{\grad(\gamma)}g+2\xi^{\flat}(\grad(\delta))i_{\xi}g+2\delta(n\gamma+\delta|\xi|^2)i_{\xi}g+2\delta i_{\nabla_{\xi}\xi}g-i_{Q\xi}g
\\& \hskip.3in -ni_{\grad(\gamma)}g-\delta i_{\grad(|\xi|^2)}g-|\xi|^2i_{\grad(\delta)}g.
\end{aligned}\end{equation}
\end{proof}

We immediately obtain the following from Proposition \ref{p33}.

\begin{corollary}
If $(M,g,\xi,\gamma)$ is an almost generalized Yamabe soliton, then the dual $1$-form $\xi^{\flat}$ of the potential vector field $\xi$ is a harmonic form if and only if
$$Q\xi=-(n-2)\grad(\gamma).$$

In particular, if $(M,g,\xi,\gamma)$ is a generalized Yamabe soliton, then $\xi^{\flat}$ is a harmonic form if and only if
$\xi \in \ker Q$.
\end{corollary}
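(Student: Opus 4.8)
The plan is to obtain the statement as a direct specialization of Proposition \ref{p33}, feeding in the extra hypothesis that distinguishes a generalized (or almost generalized) Yamabe soliton from a general almost $\xi^{\flat}$-Yamabe soliton. According to the definition in Section 2, an almost generalized Yamabe soliton is precisely the case $\beta=\delta=0$. Since Proposition \ref{p33} already covers $\beta=0$, the only new information to impose is $\delta=0$, and all the analytic work has already been done in passing from the harmonicity criterion (\ref{l3}) to the explicit identity (\ref{p4}).

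First I would substitute $\delta=0$ into the characterization (\ref{p4}). Because $\delta$ vanishes identically on $M$, we have $\grad(\delta)=0$, so both $|\xi|^2\grad(\delta)$ and $\xi^{\flat}(\grad(\delta))\,\xi$ disappear; moreover the entire bracketed expression in (\ref{p4}) carries a prefactor $2\delta$ and hence also drops out. What remains is exactly
$$Q\xi=-(n-2)\grad(\gamma),$$
which is precisely the claimed equivalence: by Proposition \ref{p33}, $\xi^{\flat}$ is harmonic if and only if (\ref{p4}) holds, and under $\delta=0$ this reduces to the displayed identity.

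For the \emph{in particular} clause I would then add the defining feature of a (non-almost) generalized Yamabe soliton, namely that $\gamma$ is constant. This forces $\grad(\gamma)=0$, so the right-hand side of the identity above vanishes and the harmonicity condition collapses to $Q\xi=0$, i.e.\ $\xi\in\ker Q$. The two assertions are therefore nothing more than the two successive collapses $\delta\equiv 0$ and then $\gamma\equiv\text{const}$.

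I do not expect a genuine obstacle here: the substantive content lives entirely in the Hodge-theoretic Lemma behind (\ref{l3}) and in the divergence computation yielding (\ref{p4}). The only point that merits a moment's care is to verify that \emph{almost generalized Yamabe} forces $\delta=0$ in addition to $\beta=0$, so that every $\delta$-weighted term in (\ref{p4}) vanishes simultaneously; once this bookkeeping is confirmed, the corollary follows immediately.
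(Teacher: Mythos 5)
Your proposal is correct and matches the paper's own proof exactly: the paper also obtains the corollary by setting $\delta=0$ in (\ref{p4}) to get $Q\xi=-(n-2)\grad(\gamma)$, and then imposes $\gamma$ constant for the second assertion. Your extra care in checking that \emph{almost generalized Yamabe} means $\beta=\delta=0$ (so all $\delta$-weighted terms vanish) is the right bookkeeping and is consistent with the definition in Section 2.
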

\begin{proof}
For $\delta=0$, we deduce that (\ref{p4}) holds if and only if $Q\xi=-(n-2)\grad(\gamma)$. In particular, if $\gamma$ is constant, then the second assertion follows, too.
\end{proof}

Using the fact that $(\nabla_X\theta)^{\sharp}=\nabla_X\theta^{\sharp}$, for any vector field $X$ and any $1$-form $\theta$, we conclude

\begin{proposition} Let $(M,g,\xi,\beta,\gamma,\delta)$ be a generalized soliton such that $\beta$ is nowhere zero on $M$.
Then a harmonic $1$-form $\theta$ on $(M,g)$ is Schr\"{o}dinger-Ricci harmonic if and only if it satisfies
$$\gamma \theta+\delta \theta(\xi)\xi^{\flat}=\frac{1}{2}(\nabla_{\theta^{\sharp}}\xi^{\flat}+\theta\circ \nabla \xi).$$
\end{proposition}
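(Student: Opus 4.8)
The plan is to reduce the biconditional to a pointwise condition on the Ricci operator and then read that condition off the soliton equation. Since $\theta$ is assumed harmonic, $\Delta\theta=0$, so by the definition of Schr\"{o}dinger-Ricci harmonicity the form $\theta$ is Schr\"{o}dinger-Ricci harmonic if and only if $\Ric_{\sharp}(\theta)=0$. Using the identity $\Ric_{\sharp}(\theta)=i_{Q(\theta^{\sharp})}g$ recorded in Subsection \ref{Section4}, this is equivalent to $Q(\theta^{\sharp})=0$, that is, to $\Ric(\theta^{\sharp},Y)=0$ for every vector field $Y$. (This is exactly the remark preceding the harmonic-form lemma, that for $\theta^{\sharp}\in\ker Q$ the two notions coincide.)

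Next I would evaluate the defining equation (\ref{generalsol}) of the generalized soliton on the pair $(\theta^{\sharp},Y)$ for an arbitrary $Y\in\mathfrak{X}(M)$. Using $g(\theta^{\sharp},Y)=\theta(Y)$, the equality $\eta=\xi^{\flat}$, and $\xi^{\flat}(\theta^{\sharp})=g(\xi,\theta^{\sharp})=\theta(\xi)$, this gives
$$
\tfrac{1}{2}({\mathcal L}_{\xi}g)(\theta^{\sharp},Y)+\beta\,\Ric(\theta^{\sharp},Y)=\gamma\,\theta(Y)+\delta\,\theta(\xi)\,\xi^{\flat}(Y).
$$
Because $\beta$ is nowhere zero on $M$, the condition $\Ric(\theta^{\sharp},\cdot)=0$ obtained in the first step is then equivalent to
$$
\tfrac{1}{2}({\mathcal L}_{\xi}g)(\theta^{\sharp},\cdot)=\gamma\,\theta+\delta\,\theta(\xi)\,\xi^{\flat}.
$$

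It remains to expand the left-hand side. From the standard formula $({\mathcal L}_{\xi}g)(X,Y)=g(\nabla_{X}\xi,Y)+g(X,\nabla_{Y}\xi)$ with $X=\theta^{\sharp}$, the first term equals $g(\nabla_{\theta^{\sharp}}\xi,Y)=(\nabla_{\theta^{\sharp}}\xi^{\flat})(Y)$ by metric compatibility (equivalently, the stated fact $(\nabla_{X}\theta)^{\sharp}=\nabla_{X}\theta^{\sharp}$), while the second term equals $g(\theta^{\sharp},\nabla_{Y}\xi)=\theta(\nabla_{Y}\xi)=(\theta\circ\nabla\xi)(Y)$ by the convention for $\theta\circ\nabla\xi$ used throughout the paper. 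Substituting these two identifications yields precisely
$$
\gamma\,\theta+\delta\,\theta(\xi)\,\xi^{\flat}=\tfrac{1}{2}\bigl(\nabla_{\theta^{\sharp}}\xi^{\flat}+\theta\circ\nabla\xi\bigr),
$$
which is the asserted equivalent condition.

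There is no genuinely hard step here; the argument is pure bookkeeping once harmonicity is used to discard $\Delta\theta$. The only point requiring care is the correct interpretation of the notation $\theta\circ\nabla\xi$ as the $1$-form $Y\mapsto\theta(\nabla_{Y}\xi)$ and its matching with the second summand of the Lie-derivative expansion, together with keeping $\nabla_{\theta^{\sharp}}\xi^{\flat}$ (rather than $\nabla_{Y}$ applied to something) in the first summand. With those conventions fixed, the equivalence follows directly.
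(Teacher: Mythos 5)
Your proof is correct and follows essentially the same route as the paper: reduce, via harmonicity, to the condition $\Ric_{\sharp}(\theta)=0$ (the paper gets this from its two divergence lemmas, you get it directly from the definitions --- equivalent either way), then evaluate the soliton equation at $(\theta^{\sharp},\cdot\,)$, use that $\beta$ is nowhere zero, and expand $({\mathcal L}_{\xi}g)(\theta^{\sharp},\cdot)$ into $\nabla_{\theta^{\sharp}}\xi^{\flat}+\theta\circ\nabla\xi$. No issues.
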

\begin{proof}
From (\ref{l2}) and (\ref{l3}) we find that $\theta$ is Schr\"{o}dinger-Ricci harmonic if and only if $\Ric_{\sharp}(\theta)=0$,
which from the generalized soliton equation (\ref{generalsol}) is equivalent to
$$0=\beta i_{Q(\theta^{\sharp})}g=\gamma i_{\theta^{\sharp}}g+\delta \xi^{\flat}(\theta^{\sharp})\xi^{\flat}-\frac{1}{2}(i_{\nabla_{\theta^{\sharp}}\xi}g+\theta\circ \nabla \xi),$$
hence the conclusion.
\end{proof}

The relations between the above notions can be summarized as the following.

\begin{theorem}\label{t4}
Let $\theta$ be a $1$-form on a Riemannian manifold $(M,g)$. \\
{\rm (i)} If $\theta$ is a solution of the Schr\"{o}dinger-Ricci equation, then $\theta$ is

{\rm (a)} a Schr\"{o}dinger-Ricci harmonic form if and only if $d(\div(\theta^{\sharp}))=0$;

{\rm (b)} a harmonic form if and only if $\Ric_{\sharp}(\theta)+d(\div(\theta^{\sharp}))=0$.\\
{\rm (ii)} If $\theta$ is a Schr\"{o}dinger-Ricci harmonic form, then $\theta$ is

{\rm (a)} a solution of the Schr\"{o}dinger-Ricci equation if and only if $d(\div(\theta^{\sharp}))=0$;

{\rm (b)} a harmonic form if and only if $\Ric_{\sharp}(\theta)=0$.\\
{\rm (iii)} If $\theta$ is a harmonic form, then $\theta$ is

{\rm (a)} a solution of the Schr\"{o}dinger-Ricci equation if and only if $\Ric_{\sharp}(\theta)+d(\div(\theta^{\sharp}))=0$;

{\rm (b)} a Schr\"{o}dinger-Ricci harmonic form if and only if $\Ric_{\sharp}(\theta)=0$.
\end{theorem}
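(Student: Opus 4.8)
The plan is to regard all three notions as linear conditions on the single $1$-form $\theta$, each assembled from the same three building blocks $\Delta(\theta)$, $\Ric_{\sharp}(\theta)$ and $d(\div(\theta^{\sharp}))$. Writing these for brevity as $A:=\Delta(\theta)$, $B:=\Ric_{\sharp}(\theta)$ and $C:=d(\div(\theta^{\sharp}))$, the three definitions recalled in Subsections \ref{Subection4.1}--\ref{Subection4.3} read: $\theta$ solves the Schr\"{o}dinger-Ricci equation iff $A+B+C=0$; $\theta$ is Schr\"{o}dinger-Ricci harmonic iff $A+B=0$; and $\theta$ is harmonic iff $A=0$. Since passing from one condition to another only inserts or deletes the terms $B$ and $C$, every assertion in the theorem should follow by simple algebra, with no analytic input whatsoever.

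First I would treat part (i), where the hypothesis is $A+B+C=0$. Subtracting this relation from the harmonic-type conditions isolates the surviving term: $\theta$ is Schr\"{o}dinger-Ricci harmonic, i.e.\ $A+B=0$, exactly when the discarded term $C=d(\div(\theta^{\sharp}))$ vanishes, which is (a); and $\theta$ is harmonic, i.e.\ $A=0$, exactly when $B+C=\Ric_{\sharp}(\theta)+d(\div(\theta^{\sharp}))=0$, which is (b). The same bookkeeping handles part (ii), where the hypothesis $A+B=0$ forces the equivalences $A+B+C=0\Leftrightarrow C=0$ and $A=0\Leftrightarrow B=0$, and part (iii), where the hypothesis $A=0$ forces $A+B+C=0\Leftrightarrow B+C=0$ and $A+B=0\Leftrightarrow B=0$. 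In each of the six cases the claimed condition is precisely the linear combination of $B$ and $C$ left over after removing the hypothesis relation.

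The proof is therefore purely formal, and I do not expect any genuine obstacle; the only point requiring care is the bookkeeping itself, namely keeping track of which of the two optional terms $\Ric_{\sharp}(\theta)$ and $d(\div(\theta^{\sharp}))$ is being cancelled against the hypothesis in each equivalence, so that the residual condition is stated in the correct form.
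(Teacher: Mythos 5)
Your proposal is correct: with $A=\Delta(\theta)$, $B=\Ric_{\sharp}(\theta)$, $C=d(\div(\theta^{\sharp}))$, the three notions are exactly $A+B+C=0$, $A+B=0$ and $A=0$, and each of the six equivalences is the trivial cancellation you describe. The paper states this theorem without any proof, precisely because the argument is the immediate algebraic bookkeeping you carry out, so your route coincides with the intended (and essentially only) one.
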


From (\ref{generalsol}), (\ref{tr}) and Theorem \ref{t4}, we find the relations between the cases when the dual $1$-form $\xi^{\flat}$ of the potential vector field $\xi$ is a solution of the Schr\"{o}dinger-Ricci equation, is a Schr\"{o}dinger-Ricci harmonic form or a harmonic form.
\begin{proposition}
Let $(M^n,g,\xi,\beta,\gamma,\delta)$ be a generalized soliton such that $\beta$ is nowhere zero on $M$.\\
{\rm (i)} If $\xi^{\flat}$ is a solution of the Schr\"{o}dinger-Ricci equation, then $\xi^{\flat}$ is

{\rm (a)} a Schr\"{o}dinger-Ricci harmonic form if and only if $$\beta\scal-n\gamma-\delta|\xi|^2 \ \ \text{is constant};$$

{\rm (b)} a harmonic form if and only if $$d(\beta\scal)=\frac{1}{\beta}(\gamma +\delta |\xi|^2)\xi^{\flat}+nd\gamma+|\xi|^2d\delta-\frac{1}{2\beta}\nabla_{\xi}\xi^{\flat}+\left(2\delta-\frac{1}{2\beta}\right)\xi^{\flat}\circ \nabla \xi.$$ \\
{\rm (ii)} If $\xi^{\flat}$ is a Schr\"{o}dinger-Ricci harmonic form, then $\xi^{\flat}$ is

{\rm (a)} a solution of the Schr\"{o}dinger-Ricci equation if and only if $$\beta\scal-n\gamma-\delta|\xi|^2 \ \ \text{is constant};$$

{\rm (b)} a harmonic form if and only if $$(\gamma +\delta |\xi|^2)\xi^{\flat}=\frac{1}{2}(\nabla_{\xi}\xi^{\flat}+\xi^{\flat}\circ \nabla \xi).$$\\
{\rm (iii)} If $\xi^{\flat}$ is a harmonic form, then $\xi^{\flat}$ is

{\rm (a)} a solution of the Schr\"{o}dinger-Ricci equation if and only if $$d(\beta\scal)=\frac{1}{\beta}(\gamma +\delta |\xi|^2)\xi^{\flat}+nd\gamma+|\xi|^2d\delta-\frac{1}{2\beta}\nabla_{\xi}\xi^{\flat}+\left(2\delta-\frac{1}{2\beta}\right)\xi^{\flat}\circ \nabla \xi;$$

{\rm (b)} a Schr\"{o}dinger-Ricci harmonic form if and only if $$(\gamma +\delta |\xi|^2)\xi^{\flat}=\frac{1}{2}(\nabla_{\xi}\xi^{\flat}+\xi^{\flat}\circ \nabla \xi).$$
\end{proposition}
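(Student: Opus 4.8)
The plan is to specialize Theorem \ref{t4} to the case $\theta=\xi^{\flat}$, so that $\theta^{\sharp}=\xi$, and then translate each of the three abstract conditions appearing there — namely $d(\div(\xi))=0$, $\Ric_{\sharp}(\xi^{\flat})=0$, and $\Ric_{\sharp}(\xi^{\flat})+d(\div(\xi))=0$ — into the explicit soliton data via (\ref{tr}) and (\ref{pin}). Since the six clauses of the statement pair up according to which of these three conditions they invoke, there are really only three computations to carry out.

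First I would handle the clauses governed by $d(\div(\xi))=0$, namely (i)(a) and (ii)(a). The trace identity (\ref{tr}) gives $\div(\xi)=n\gamma+\delta|\xi|^2-\beta\scal$, so $d(\div(\xi))=0$ holds precisely when $\beta\scal-n\gamma-\delta|\xi|^2$ is constant. Feeding this equivalence into the corresponding clauses of Theorem \ref{t4} yields the stated criterion at once. Next I would treat the clauses governed by $\Ric_{\sharp}(\xi^{\flat})=0$, namely (ii)(b) and (iii)(b). Recalling that $\Ric_{\sharp}(\xi^{\flat})=i_{Q\xi}g$ and that $\beta$ is nowhere zero, I divide (\ref{pin}) by $\beta$, using $i_{\nabla_{\xi}\xi}g=\nabla_{\xi}\xi^{\flat}$, to get
$$\Ric_{\sharp}(\xi^{\flat})=\frac{1}{\beta}\left((\gamma+\delta|\xi|^2)\xi^{\flat}-\frac{1}{2}(\nabla_{\xi}\xi^{\flat}+\xi^{\flat}\circ\nabla\xi)\right).$$
Setting this equal to zero and clearing $\beta$ gives exactly $(\gamma+\delta|\xi|^2)\xi^{\flat}=\tfrac12(\nabla_{\xi}\xi^{\flat}+\xi^{\flat}\circ\nabla\xi)$, which is the condition in (ii)(b) and (iii)(b).

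Finally, the mixed condition $\Ric_{\sharp}(\xi^{\flat})+d(\div(\xi))=0$ governs (i)(b) and (iii)(a). Here I would add the two expressions above: the formula for $\Ric_{\sharp}(\xi^{\flat})$ together with $d(\div(\xi))=n\,d\gamma+|\xi|^2 d\delta+\delta\,d(|\xi|^2)-d(\beta\scal)$, the latter obtained by differentiating the expression for $\div(\xi)$. The only nonroutine step is to rewrite $\delta\,d(|\xi|^2)$ by means of the identity $\xi^{\flat}\circ\nabla\xi=\tfrac12 d(|\xi|^2)$ noted earlier; this turns $\delta\,d(|\xi|^2)$ into $2\delta\,\xi^{\flat}\circ\nabla\xi$, which, combined with the $-\tfrac{1}{2\beta}\xi^{\flat}\circ\nabla\xi$ term inherited from $\Ric_{\sharp}(\xi^{\flat})$, produces the coefficient $\bigl(2\delta-\tfrac{1}{2\beta}\bigr)$ in front of $\xi^{\flat}\circ\nabla\xi$. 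Solving the resulting equation for $d(\beta\scal)$ reproduces the displayed formula in (i)(b) and (iii)(a).

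I do not expect any genuine obstacle: the whole proposition is a direct translation of Theorem \ref{t4} through the structural identities (\ref{tr}) and (\ref{pin}), and the assumption that $\beta$ is nowhere zero is exactly what licenses dividing (\ref{pin}) by $\beta$. The only point requiring care is the bookkeeping of the $\xi^{\flat}\circ\nabla\xi$ and $d(|\xi|^2)$ terms in the mixed case, so that the coefficient $\bigl(2\delta-\tfrac{1}{2\beta}\bigr)$ emerges correctly.
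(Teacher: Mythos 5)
Your proposal is correct and follows essentially the same route as the paper, which derives this proposition directly from Theorem \ref{t4} together with the trace identity (\ref{tr}) and the contraction (\ref{pin}) of the soliton equation (your formula for $\Ric_{\sharp}(\xi^{\flat})$ is exactly (\ref{pin}) divided by $\beta$). The bookkeeping in the mixed case, including the coefficient $\bigl(2\delta-\tfrac{1}{2\beta}\bigr)$, checks out.
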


\subsection{$1$-forms orthogonal to $\xi^{\flat}$}\label{Subsection3.4}

Two $1$-forms $\theta_1$ and $\theta_2$ are \textit{orthogonal} if $g(\theta_1^{\sharp},\theta_2^{\sharp})=0$, i.e., $\langle\theta_1,\theta_2\rangle=0$.

\smallskip

Since $\theta_1(\theta_2^{\sharp})=\theta_2(\theta_1^{\sharp})$, remark that $\theta_1$ and $\theta_2$ are orthogonal if and only if $$\theta_1^{\sharp}\in \ker \theta_2\ \ (\textit{or} \ \ \theta_2^{\sharp}\in \ker \theta_1).$$

Computing the generalized soliton equation (\ref{generalsol}) in $(\theta^{\sharp},\theta^{\sharp})$, we obtain
$$g(\nabla_{\theta^{\sharp}}\xi,\theta^{\sharp})+\beta g(Q(\theta^{\sharp}),\theta^{\sharp})-\gamma g(\theta^{\sharp},\theta^{\sharp})=\delta (\xi^{\flat}(\theta^{\sharp}))^2$$
and we can state

\begin{proposition}\label{j}
Let $(M,g,\xi,\beta,\gamma,\delta)$ be a generalized soliton such that $\delta$ is nowhere zero on $M$.
Then a $1$-form $\theta$ is orthogonal to $\xi^{\flat}$ if and only if the dual vector field $\theta^{\sharp}$ of $\theta$ satisfies
$$
\nabla_{\theta^{\sharp}}\xi+\beta Q(\theta^{\sharp})-\gamma \theta^{\sharp}\in \ker \theta.
$$
\end{proposition}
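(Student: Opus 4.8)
The plan is to leverage the pointwise scalar identity displayed immediately before the statement, obtained by evaluating the soliton equation \eqref{generalsol} on the diagonal pair $(\theta^{\sharp},\theta^{\sharp})$, namely
$$g(\nabla_{\theta^{\sharp}}\xi,\theta^{\sharp})+\beta\, g(Q(\theta^{\sharp}),\theta^{\sharp})-\gamma\, g(\theta^{\sharp},\theta^{\sharp})=\delta\,(\xi^{\flat}(\theta^{\sharp}))^2.$$
First I would rewrite the left-hand side as a single inner product. Using bilinearity of $g$ together with the defining relation $g(QX,Y)=\Ric(X,Y)$, the three terms collapse to $g\bigl(\nabla_{\theta^{\sharp}}\xi+\beta Q(\theta^{\sharp})-\gamma\,\theta^{\sharp},\,\theta^{\sharp}\bigr)$, so that the identity reads
$$g\bigl(\nabla_{\theta^{\sharp}}\xi+\beta Q(\theta^{\sharp})-\gamma\,\theta^{\sharp},\,\theta^{\sharp}\bigr)=\delta\,(\xi^{\flat}(\theta^{\sharp}))^2.$$

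Next I would translate both the membership condition and the orthogonality condition into the vanishing of scalar quantities. Since $\theta=(\theta^{\sharp})^{\flat}$, for any vector field $W$ we have $\theta(W)=g(\theta^{\sharp},W)$, hence $W\in\ker\theta$ is equivalent to $g(\theta^{\sharp},W)=0$. Applying this with $W=\nabla_{\theta^{\sharp}}\xi+\beta Q(\theta^{\sharp})-\gamma\,\theta^{\sharp}$ and invoking the rewritten identity, the condition $\nabla_{\theta^{\sharp}}\xi+\beta Q(\theta^{\sharp})-\gamma\,\theta^{\sharp}\in\ker\theta$ becomes equivalent to $\delta\,(\xi^{\flat}(\theta^{\sharp}))^2=0$. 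On the other hand, $\xi^{\flat}(\theta^{\sharp})=g(\xi,\theta^{\sharp})=\langle\xi^{\flat},\theta\rangle$, so orthogonality of $\theta$ and $\xi^{\flat}$ is precisely the condition $\xi^{\flat}(\theta^{\sharp})=0$.

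Finally I would close the equivalence using the hypothesis that $\delta$ is nowhere zero on $M$: the equation $\delta\,(\xi^{\flat}(\theta^{\sharp}))^2=0$ forces $(\xi^{\flat}(\theta^{\sharp}))^2=0$, hence $\xi^{\flat}(\theta^{\sharp})=0$, and the converse is immediate. Chaining the two equivalences yields the claim. There is no genuine analytic obstacle here, since the essential computation is already the diagonal evaluation supplied above; the only points requiring care are the bookkeeping that turns the membership $W\in\ker\theta$ into the scalar equation $g(\theta^{\sharp},W)=0$, and the observation that the nowhere-vanishing of $\delta$ is exactly what permits cancelling the factor $\delta$ when passing between $\delta\,(\xi^{\flat}(\theta^{\sharp}))^2=0$ and $\xi^{\flat}(\theta^{\sharp})=0$. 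Were $\delta$ allowed to vanish at some point, the forward implication would break down on that zero set, which is precisely why the hypothesis $\delta\neq 0$ is imposed.
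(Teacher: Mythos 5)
Your proposal is correct and follows essentially the same route as the paper: the paper simply records the diagonal evaluation of \eqref{generalsol} at $(\theta^{\sharp},\theta^{\sharp})$ and reads off the equivalence, and you fill in exactly the bookkeeping (collapsing the left-hand side into $g\bigl(\nabla_{\theta^{\sharp}}\xi+\beta Q(\theta^{\sharp})-\gamma\theta^{\sharp},\theta^{\sharp}\bigr)=\theta\bigl(\nabla_{\theta^{\sharp}}\xi+\beta Q(\theta^{\sharp})-\gamma\theta^{\sharp}\bigr)$ and cancelling the nowhere-vanishing $\delta$) that the paper leaves implicit. No gaps.
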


Computing now the same equation in $(\theta^{\sharp}, \cdot\,)$, we find
$$\frac{1}{2}(\nabla_{\theta^{\sharp}}\xi^{\flat}+\xi^{\flat}\circ \nabla \theta^{\sharp})=i_{\gamma \theta^{\sharp}-\beta Q(\theta^{\sharp})}g+\delta \xi^{\flat}(\theta^{\sharp})\xi^{\flat}$$
and we can state

\begin{proposition}
Let $(M,g,\xi,\beta,\gamma,\delta)$ be a generalized soliton such that $\delta$ is nowhere zero on $M$.
Then a $1$-form $\theta$ is orthogonal to $\xi^{\flat}$ if and only if the dual vector field $\theta^{\sharp}$ of $\theta$ satisfies
$$
\frac{1}{2}(\nabla_{\theta^{\sharp}}\xi^{\flat}+\xi^{\flat}\circ \nabla \theta^{\sharp})=i_{\gamma \theta^{\sharp}-\beta Q(\theta^{\sharp})}g.
$$
\end{proposition}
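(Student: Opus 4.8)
The plan is to read the equivalence straight off the pointwise evaluation of the generalized soliton equation \eqref{generalsol} on the pair $(\theta^{\sharp},\,\cdot\,)$, which is exactly the $1$-form identity displayed immediately above the statement, namely
$$\frac{1}{2}(\nabla_{\theta^{\sharp}}\xi^{\flat}+\xi^{\flat}\circ \nabla \theta^{\sharp})=i_{\gamma \theta^{\sharp}-\beta Q(\theta^{\sharp})}g+\delta \xi^{\flat}(\theta^{\sharp})\xi^{\flat}.$$
First I would recall how this arises, so that the proof is self-contained: inserting $\theta^{\sharp}$ into the first slot of \eqref{generalsol} turns each tensor into a $1$-form via $\Ric(\theta^{\sharp},\,\cdot\,)=i_{Q(\theta^{\sharp})}g$, via $(\eta\otimes\eta)(\theta^{\sharp},\,\cdot\,)=\xi^{\flat}(\theta^{\sharp})\,\xi^{\flat}$, and via $\frac{1}{2}({\mathcal L}_{\xi}g)(\theta^{\sharp},\,\cdot\,)=\frac{1}{2}(\nabla_{\theta^{\sharp}}\xi^{\flat}+\xi^{\flat}\circ \nabla \theta^{\sharp})$; a rearrangement then collects the $\gamma$- and $\beta$-terms into $i_{\gamma \theta^{\sharp}-\beta Q(\theta^{\sharp})}g$, giving the displayed identity.

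The core observation is that the asserted identity is precisely the displayed one with the last summand $\delta\,\xi^{\flat}(\theta^{\sharp})\,\xi^{\flat}$ deleted; hence the two are equivalent if and only if this summand vanishes identically. Since $\delta$ is nowhere zero on $M$, the product $\delta\,\xi^{\flat}(\theta^{\sharp})\,\xi^{\flat}$ vanishes if and only if $\xi^{\flat}(\theta^{\sharp})\,\xi^{\flat}=0$ as a $1$-form. I would then invoke the orthogonality criterion recorded in this subsection: $\theta$ is orthogonal to $\xi^{\flat}$ exactly when $\theta^{\sharp}\in\ker \xi^{\flat}$, i.e. when $\xi^{\flat}(\theta^{\sharp})=0$.

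The forward direction is then immediate: if $\theta$ is orthogonal to $\xi^{\flat}$ then $\xi^{\flat}(\theta^{\sharp})\equiv 0$, the $\delta$-summand drops out, and the claimed identity follows from the displayed one. For the converse the only point needing a line of care is passing from $\xi^{\flat}(\theta^{\sharp})\,\xi^{\flat}=0$ back to $\xi^{\flat}(\theta^{\sharp})=0$: at any point where $\xi^{\flat}(\theta^{\sharp})=g(\xi,\theta^{\sharp})\neq 0$ one has $\xi\neq 0$, hence $\xi^{\flat}\neq 0$, so the product cannot vanish there; while at points where $\xi=0$ the scalar $\xi^{\flat}(\theta^{\sharp})$ already vanishes. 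Thus $\xi^{\flat}(\theta^{\sharp})\equiv 0$, i.e. $\theta$ is orthogonal to $\xi^{\flat}$. This small zero-set bookkeeping, needed because $\xi^{\flat}$ may vanish on a nontrivial set, is the only mild obstacle; everything else is the bilinear evaluation that produced the preceding display.
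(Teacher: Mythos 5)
Your argument is exactly the paper's: the authors obtain this proposition by evaluating the soliton equation \eqref{generalsol} at $(\theta^{\sharp},\,\cdot\,)$ to produce the displayed $1$-form identity carrying the extra summand $\delta\,\xi^{\flat}(\theta^{\sharp})\,\xi^{\flat}$, and then use that $\delta$ is nowhere zero to identify the vanishing of that summand with orthogonality of $\theta$ and $\xi^{\flat}$. Your additional pointwise check that $\xi^{\flat}(\theta^{\sharp})\,\xi^{\flat}=0$ forces $\xi^{\flat}(\theta^{\sharp})=0$ is a small refinement of the converse that the paper leaves implicit.
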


\section{The gradient case}\label{Section5}

Let $\xi=\grad(f)$, $\xi^{\flat}=df$ with $f$ a smooth function on $(M,g)$.
Taking into account that \cite{blag}
$$
\div({\mathcal L}_{\xi}g)=2 d(\div(\xi))+2i_{Q\xi}g
$$
and using (\ref{e61}):
$$2\div(\Hess(f))=\Delta(df)+\Ric_{\sharp}(df)+d(\Delta (f)),$$
we deduce
\begin{equation}\label{er}
\Delta(df)=d(\Delta(f))+\Ric_{\sharp}(df).
\end{equation}
Therefore, $df$ is

(a) a solution of the Schr\"{o}dinger-Ricci equation if and only if
$$\Delta(df)=0;$$

(b) a Schr\"{o}dinger-Ricci harmonic form if and only if
$$\Delta(df)=\frac{1}{2}d(\Delta(f));$$

(c) a harmonic form if and only if
$$df\circ Q=-d(\Delta(f)).$$

Also, an exact $1$-form $df$ is harmonic if and only if the function $f$ is harmonic and from (\ref{er}) we deduce that $df$ is harmonic if and only if
$$df\circ Q=0 \Longleftrightarrow df\in \ker (\Ric_{\sharp}).$$

\bigskip

Computing the gradient generalized soliton equation (\ref{h}) in $(\theta^{\sharp}, \cdot\,)$, we obtain
$$\nabla_{\theta^{\sharp}}\xi+\beta Q(\theta^{\sharp})=\gamma \theta^{\sharp}+\delta \xi^{\flat}(\theta^{\sharp})\xi$$
which, by taking the inner product with $\xi$, implies
$$\frac{1}{2}\theta^{\sharp}(|\xi|^2)+\beta\theta(Q\xi)=(\gamma+\delta|\xi|^2)\xi^{\flat}(\theta^{\sharp}).$$

Hence, we can state the followings.

\begin{proposition}\label{P:5.1}
Let $(M,g,\xi,\beta,\gamma,\delta)$ be a gradient generalized soliton such that $\delta$ is nowhere zero on $M$. Then a $1$-form $\theta$ is orthogonal to $\xi^{\flat}$ if and only if the dual vector field $\theta^{\sharp}$ of $\theta$ satisfies
$$
\nabla_{\theta^{\sharp}}\xi+\beta Q(\theta^{\sharp})-\gamma \theta^{\sharp}=0,
$$
hence we have
\begin{equation}\label{k}
\frac{1}{2}\theta^{\sharp}(|\xi|^2)=-\beta\theta(Q\xi).
\end{equation}
\end{proposition}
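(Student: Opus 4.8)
The plan is to read both assertions off the two displayed identities that immediately precede the statement, namely the evaluation of the gradient soliton equation \eqref{h} on $(\theta^{\sharp},\cdot\,)$ and its pairing with $\xi$. First I would translate the orthogonality condition into an algebraic one: by the remark preceding Proposition \ref{j}, $\theta$ is orthogonal to $\xi^{\flat}$ exactly when $\theta^{\sharp}\in\ker\xi^{\flat}$, i.e. when $\xi^{\flat}(\theta^{\sharp})=g(\xi,\theta^{\sharp})=0$. This reduces the equivalence to be proved to a statement about the scalar $\xi^{\flat}(\theta^{\sharp})$.

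Next I would rewrite the first vector identity as
$$\nabla_{\theta^{\sharp}}\xi+\beta\,Q(\theta^{\sharp})-\gamma\,\theta^{\sharp}=\delta\,\xi^{\flat}(\theta^{\sharp})\,\xi,$$
which isolates on the left precisely the combination appearing in the proposition. The forward implication is then immediate: if $\theta$ is orthogonal to $\xi^{\flat}$, then $\xi^{\flat}(\theta^{\sharp})=0$ and the right-hand side vanishes. For the converse, assuming the left-hand side vanishes gives $\delta\,\xi^{\flat}(\theta^{\sharp})\,\xi=0$ on $M$. The one point requiring care — and the only genuine obstacle — is passing from this product to $\xi^{\flat}(\theta^{\sharp})=0$, since the factor $\xi$ may vanish somewhere. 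I would settle it pointwise: where $\xi\neq0$ the hypothesis that $\delta$ is nowhere zero forces $\xi^{\flat}(\theta^{\sharp})=0$, while where $\xi=0$ one has $\xi^{\flat}=0$, so $\langle\theta,\xi^{\flat}\rangle=0$ holds trivially. Thus $\theta$ is orthogonal to $\xi^{\flat}$ everywhere, completing the equivalence.

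Finally, for the concluding relation \eqref{k} I would substitute $\xi^{\flat}(\theta^{\sharp})=0$ into the second displayed identity
$$\frac{1}{2}\theta^{\sharp}(|\xi|^2)+\beta\,\theta(Q\xi)=(\gamma+\delta|\xi|^2)\,\xi^{\flat}(\theta^{\sharp}),$$
whose right-hand side then collapses to zero, leaving exactly $\tfrac12\theta^{\sharp}(|\xi|^2)=-\beta\,\theta(Q\xi)$. No further computation is needed, the self-adjointness of $Q$ and metric compatibility of $\nabla$ having already been used in deriving that pairing identity.
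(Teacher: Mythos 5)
Your argument is correct and follows exactly the route the paper intends: the statement is read off from the two displayed identities obtained by evaluating \eqref{h} on $(\theta^{\sharp},\cdot\,)$ and pairing with $\xi$, with orthogonality meaning $\xi^{\flat}(\theta^{\sharp})=0$. Your extra care in the converse direction (splitting into the sets where $\xi\neq 0$ and $\xi=0$) is a welcome refinement of a step the paper leaves implicit, but it does not change the approach.
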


\begin{remark}
If $(g,\xi:=\grad(f),\beta,\gamma,\delta)$ defines a gradient generalized soliton on a compact and oriented manifold $M$, and the $1$-form $\theta$ is orthogonal to $\xi^{\flat}$, then
$$0=g(\nabla_{\xi}\xi,\theta^{\sharp})+\beta g(\xi,Q(\theta^{\sharp}))=
(\nabla_{\xi}\xi^{\flat})\theta^{\sharp}+\beta \xi^{\flat}(Q(\theta^{\sharp})),$$
hence
$$\theta^{\sharp}\in \ker(\nabla_{\xi}\xi^{\flat}+\beta (\xi^{\flat}\circ Q)).$$
\end{remark}

\begin{proposition}\label{P:5.11}
Let $(M,g,\xi,\gamma,\delta)$ be a gradient almost $\xi^{\flat}$-Yamabe soliton such that $\delta$ is nowhere zero on $M$. Then a $1$-form $\theta$ is orthogonal to $\xi^{\flat}$ if and only if the dual vector field $\theta^{\sharp}$ of $\theta$ satisfies
$$
\nabla_{\theta^{\sharp}}\xi=\gamma \theta^{\sharp},
$$
hence we have
\begin{equation}\label{k}
\theta^{\sharp}(|\xi|^2)=0,
\end{equation}
i.e., $|\xi|^2$ is constant along each integral curve of $\theta^{\sharp}$.
\end{proposition}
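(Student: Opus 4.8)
The plan is to specialize the computation that precedes Proposition~\ref{P:5.1} to the Yamabe setting, where $\beta=0$. First I would record the defining relation of the gradient almost $\xi^{\flat}$-Yamabe soliton, namely equation~(\ref{h}) with $\beta=0$:
\[
\nabla\grad(f)=\gamma\cdot I+\delta\cdot\eta\otimes\xi .
\]
Evaluating this endomorphism identity on $\theta^{\sharp}$ (that is, computing in $(\theta^{\sharp},\cdot\,)$ exactly as was done to obtain the displayed formula just before Proposition~\ref{P:5.1}) yields
\[
\nabla_{\theta^{\sharp}}\xi=\gamma\,\theta^{\sharp}+\delta\,\xi^{\flat}(\theta^{\sharp})\,\xi ,
\]
which is simply the $\beta=0$ instance of that formula.

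For the equivalence I would argue both directions from this single relation. Recall that $\theta$ is orthogonal to $\xi^{\flat}$ precisely when $\xi^{\flat}(\theta^{\sharp})=g(\xi,\theta^{\sharp})=0$. Under this hypothesis the last term drops out and the relation collapses to $\nabla_{\theta^{\sharp}}\xi=\gamma\,\theta^{\sharp}$, giving the forward implication. Conversely, if $\nabla_{\theta^{\sharp}}\xi=\gamma\,\theta^{\sharp}$, then subtracting forces $\delta\,\xi^{\flat}(\theta^{\sharp})\,\xi=0$; since $\delta$ is nowhere zero, the scalar $\xi^{\flat}(\theta^{\sharp})$ must vanish at every point where $\xi\neq0$, while at the zeros of $\xi$ one has $\xi^{\flat}=0$ so that orthogonality holds trivially. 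Hence $\xi^{\flat}(\theta^{\sharp})\equiv 0$ on all of $M$, i.e. $\theta$ is orthogonal to $\xi^{\flat}$.

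Finally, for the ``hence'' conclusion I would take the inner product of $\nabla_{\theta^{\sharp}}\xi=\gamma\,\theta^{\sharp}$ with $\xi$. Using metric compatibility in the form $\tfrac{1}{2}\theta^{\sharp}(|\xi|^2)=g(\nabla_{\theta^{\sharp}}\xi,\xi)$ together with $g(\theta^{\sharp},\xi)=\xi^{\flat}(\theta^{\sharp})=0$, one obtains $\tfrac{1}{2}\theta^{\sharp}(|\xi|^2)=\gamma\,\xi^{\flat}(\theta^{\sharp})=0$, so that $\theta^{\sharp}(|\xi|^2)=0$ and $|\xi|^2$ is constant along each integral curve of $\theta^{\sharp}$. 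I expect the whole argument to be routine, being the $\beta=0$ reduction of Proposition~\ref{P:5.1}; the only point demanding a little care is the converse direction, where passing from $\xi^{\flat}(\theta^{\sharp})\,\xi=0$ to $\xi^{\flat}(\theta^{\sharp})=0$ requires separating the zero set of $\xi$, but this causes no real difficulty since $\xi^{\flat}$ vanishes there.
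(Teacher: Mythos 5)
Your proof is correct and follows essentially the same route as the paper, which obtains Proposition~\ref{P:5.11} as the $\beta=0$ specialization of the identity $\nabla_{\theta^{\sharp}}\xi+\beta Q(\theta^{\sharp})=\gamma\,\theta^{\sharp}+\delta\,\xi^{\flat}(\theta^{\sharp})\,\xi$ displayed just before Proposition~\ref{P:5.1}, together with its contraction with $\xi$. The only difference is your explicit treatment of the converse on the zero set of $\xi$, a detail the paper leaves implicit.
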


In \cite{bcd} we proved that
\begin{equation}\begin{aligned}\label{ghad}&
\frac{1}{2}\Delta(\theta(\grad(f)))=\gamma \div(\theta^{\sharp})-\beta\div(Q(\theta^{\sharp}))+\frac{\beta}{2}\theta^{\sharp}(\scal)\\&\hskip.3in+2\langle df,\Delta (\theta)\rangle-
\langle d(\Delta(f)), \theta\rangle+\delta (\nabla_{\grad(f)}\theta)(\grad(f))
\end{aligned}\end{equation}
and the following Bochner-type formula for a gradient generalized soliton.

\begin{proposition}\label{P:5.5} \cite{bcd}
Let $(M^n,g,\xi:=\grad(f),\beta,\gamma,\delta)$ be a gradient generalized soliton and $\theta$ a $1$-form on $M$. Then
\begin{equation}\begin{aligned}\label{gh}
&\left(\frac{1}{2}\Delta-\delta \nabla_{\grad(f)}\right)(\theta(\grad(f)))+\langle d(\Delta(f)), \theta\rangle-2\langle df,\Delta (\theta)\rangle-\gamma \div(\theta^{\sharp})\\&\hskip.3in +\beta\div(Q(\theta^{\sharp}))=
-\frac{\scal}{2}\theta^{\sharp}(\beta)+\frac{n}{2}\theta^{\sharp}(\gamma)+\frac{|\grad(f)|^2}{2}\theta^{\sharp}(\delta)\\&\hskip.5in+\frac{\delta}{2}\theta^{\sharp}(|\grad(f)|^2)-\frac{1}{2}\theta^{\sharp}(\Delta(f))-\delta \theta(\nabla_{\grad(f)}\grad(f)),\end{aligned}\end{equation}
where $\langle \theta_1,\theta_2\rangle:=\sum_{i=1}^n\theta_1(E_i)\theta_2(E_i)$, for any $1$-forms $\theta_1$ and $\theta_2$, and $\{E_i\}_{1\leq i\leq n}$ a local orthonormal frame field on $(M,g)$.
\end{proposition}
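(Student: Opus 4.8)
The plan is to obtain \eqref{gh} directly from the identity \eqref{ghad} stated immediately above, combined with the trace relation \eqref{tr}; no fresh Bochner computation is then required, since the only two remaining ingredients are the Leibniz rule for $\nabla$ and a single directional differentiation of \eqref{tr}.

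First I would rewrite the last summand $\delta(\nabla_{\grad(f)}\theta)(\grad(f))$ on the right-hand side of \eqref{ghad}. Since $\nabla$ acts as a derivation over the pairing of $1$-forms with vector fields, and on the function $\theta(\grad(f))$ reduces to the ordinary directional derivative, one has
\[
(\nabla_{\grad(f)}\theta)(\grad(f)) = \nabla_{\grad(f)}\big(\theta(\grad(f))\big) - \theta\big(\nabla_{\grad(f)}\grad(f)\big).
\]
Substituting this into \eqref{ghad} and transferring the term $\delta\nabla_{\grad(f)}(\theta(\grad(f)))$ to the left produces exactly the first-order operator $\tfrac12\Delta - \delta\nabla_{\grad(f)}$ acting on $\theta(\grad(f))$, while $-\delta\theta(\nabla_{\grad(f)}\grad(f))$ survives on the right and already matches the final summand of \eqref{gh}. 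I would likewise move $\langle d(\Delta(f)),\theta\rangle$, $-2\langle df,\Delta(\theta)\rangle$, $-\gamma\div(\theta^{\sharp})$ and $\beta\div(Q(\theta^{\sharp}))$ to the left; after this transfer the entire right-hand side has collapsed to $\tfrac{\beta}{2}\theta^{\sharp}(\scal) - \delta\theta(\nabla_{\grad(f)}\grad(f))$.

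It then remains only to rewrite the curvature term $\tfrac{\beta}{2}\theta^{\sharp}(\scal)$. Specializing \eqref{tr} to $\xi=\grad(f)$, where $\div(\grad(f))=\Delta(f)$ and $|\xi|^2=|\grad(f)|^2$, gives $\Delta(f)+\beta\scal = n\gamma + \delta|\grad(f)|^2$. Applying the derivation $\theta^{\sharp}$ to this scalar identity and solving for $\tfrac{\beta}{2}\theta^{\sharp}(\scal)$ yields
\[
\frac{\beta}{2}\theta^{\sharp}(\scal) = -\frac{\scal}{2}\theta^{\sharp}(\beta) + \frac{n}{2}\theta^{\sharp}(\gamma) + \frac{|\grad(f)|^2}{2}\theta^{\sharp}(\delta) + \frac{\delta}{2}\theta^{\sharp}(|\grad(f)|^2) - \frac{1}{2}\theta^{\sharp}(\Delta(f)),
\]
which supplies precisely the first five terms on the right-hand side of \eqref{gh}. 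Combining this with the outcome of the previous step closes the argument.

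As for the difficulty, once \eqref{ghad} is granted the remaining work is purely algebraic bookkeeping, and the only point requiring care is the convention $\div(\grad(f))=\Delta(f)$ used when differentiating \eqref{tr}. The genuinely substantial step is the Bochner--Weitzenb\"ock-type identity \eqref{ghad} itself---relating $\Delta(\theta(\grad(f)))$ to $\langle df,\Delta(\theta)\rangle$, the curvature term $\tfrac{\beta}{2}\theta^{\sharp}(\scal)$, and the divergence terms---which is established in \cite{bcd} and which I take as given here.
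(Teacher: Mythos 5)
Your derivation is correct: the Leibniz identity $(\nabla_{\grad(f)}\theta)(\grad(f))=\nabla_{\grad(f)}(\theta(\grad(f)))-\theta(\nabla_{\grad(f)}\grad(f))$ together with the $\theta^{\sharp}$-derivative of the traced soliton equation $\Delta(f)+\beta\scal=n\gamma+\delta|\grad(f)|^2$ reproduces \eqref{gh} from \eqref{ghad} exactly, with all signs and coefficients checking out. The paper itself gives no proof here (it defers to \cite{bcd}), but it displays \eqref{ghad} immediately beforehand as the key ingredient, so your route is precisely the one the presentation intends.
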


For $1$-forms orthogonal to $df$, from (\ref{ghad}) we deduce

\begin{proposition}\label{P:5.6}
Let $(M,g,\xi:=\grad(f),\beta, \gamma,\delta)$ be a gradient generalized soliton and let $\theta$ be a
closed and co-closed $1$-form on the closed Riemannian manifold $(M,g)$,
orthogonal to $\xi^{\flat}=df$. Then
$$\beta\div(Q(\theta^{\sharp}))= \frac{\beta}{2}\theta^{\sharp}(\scal)-\delta\theta(\nabla_{\grad(f)}\grad(f)).$$
\end{proposition}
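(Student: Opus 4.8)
The plan is to extract the conclusion directly from the identity (\ref{ghad}), which in the gradient case already contains the quantity $\beta\div(Q(\theta^{\sharp}))$. Solving (\ref{ghad}) for that term gives
$$\beta\div(Q(\theta^{\sharp}))=\frac{\beta}{2}\theta^{\sharp}(\scal)+\gamma\div(\theta^{\sharp})+2\langle df,\Delta\theta\rangle-\langle d(\Delta(f)),\theta\rangle+\delta(\nabla_{\grad(f)}\theta)(\grad(f))-\tfrac12\Delta(\theta(\grad(f))),$$
so the entire proof reduces to showing that, under the stated hypotheses, every term on the right apart from $\frac{\beta}{2}\theta^{\sharp}(\scal)$ and a term equal to $-\delta\theta(\nabla_{\grad(f)}\grad(f))$ vanishes pointwise.

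First I would record the three immediate cancellations. Since $\theta$ is orthogonal to $\xi^{\flat}=df$, the function $\theta(\grad(f))=\langle\theta,df\rangle$ is identically zero on $M$, whence $\tfrac12\Delta(\theta(\grad(f)))=0$ at every point. Because $M$ is closed and $\theta$ is both closed and co-closed, $\theta$ is harmonic, so $\Delta\theta=0$ and the term $2\langle df,\Delta\theta\rangle$ drops out; co-closedness also gives $\div(\theta^{\sharp})=0$, removing $\gamma\div(\theta^{\sharp})$. Finally, differentiating $\theta(\grad(f))\equiv0$ along $\grad(f)$ and using $(\nabla_X\theta)(\grad(f))=g(\nabla_X\theta^{\sharp},\grad(f))$ yields $(\nabla_{\grad(f)}\theta)(\grad(f))=-\theta(\nabla_{\grad(f)}\grad(f))$, which converts the $\delta$-term into precisely $-\delta\theta(\nabla_{\grad(f)}\grad(f))$.

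After these steps the sole surviving term is $-\langle d(\Delta(f)),\theta\rangle=-\theta^{\sharp}(\Delta(f))$, so the proposition is equivalent to the pointwise identity $\theta^{\sharp}(\Delta(f))=0$, and this is the step I expect to be the main obstacle. Co-closedness lets me rewrite $\theta^{\sharp}(\Delta(f))=\div((\Delta(f))\theta^{\sharp})$, but this is merely a divergence: it integrates to zero over the closed manifold yet need not vanish at each point, so a genuinely local argument is required and an integration by parts would only recover the averaged statement. To attack it I would compute $\beta\div(Q(\theta^{\sharp}))$ a second time, independently, from the contracted second Bianchi identity $\div(Q)=\tfrac12\grad(\scal)$, which gives $\div(Q(\theta^{\sharp}))=\tfrac12\theta^{\sharp}(\scal)+\langle Q,\nabla\theta^{\sharp}\rangle$; substituting the gradient soliton equation (\ref{h}) in the form $\Hess(f)=\gamma g+\delta\,df\otimes df-\beta\Ric$ then reduces the claim to $\langle\Hess(f),\nabla\theta\rangle=0$. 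One verifies that $\langle\Hess(f),\nabla\theta\rangle$ equals $\theta^{\sharp}(\Delta(f))$, so the two routes converge on the same residual, and the heart of the matter is to prove this common quantity is zero at each point. For that final step I would bring in the Weitzenb\"{o}ck identity for the harmonic form $\theta$, namely $\nabla^{*}\nabla\theta=-\Ric_{\sharp}(\theta)$, together with the relation $\Ric(\theta^{\sharp},\grad(f))=-\tfrac{1}{2\beta}\theta^{\sharp}(|\xi|^2)$ supplied by Proposition \ref{P:5.1}, and try to force the remaining curvature contractions to cancel; establishing this cancellation pointwise, rather than only in the mean, is the crux.
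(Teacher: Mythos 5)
Your reduction follows exactly the paper's own route: the paper's entire proof of Proposition \ref{P:5.6} consists of substituting the hypotheses into \eqref{ghad}, killing $\tfrac12\Delta(\theta(\grad(f)))$, $\gamma\div(\theta^{\sharp})$ and $2\langle df,\Delta(\theta)\rangle$, and converting $\delta(\nabla_{\grad(f)}\theta)(\grad(f))$ into $-\delta\,\theta(\nabla_{\grad(f)}\grad(f))$, precisely as you do. Those cancellations are correct. But your proposal is not a complete proof, and you say so yourself: the substitution leaves
\begin{equation*}
\beta\div(Q(\theta^{\sharp}))=\frac{\beta}{2}\theta^{\sharp}(\scal)-\langle d(\Delta(f)),\theta\rangle-\delta\,\theta(\nabla_{\grad(f)}\grad(f)),
\end{equation*}
so the Proposition holds if and only if $\theta^{\sharp}(\Delta(f))=0$ \emph{pointwise}, and neither of your fallback arguments establishes this. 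Integration by parts (the exact form $d(\Delta(f))$ is $L^{2}$-orthogonal to the co-closed form $\theta$) gives only $\int_M\theta^{\sharp}(\Delta(f))\,dv=0$, as you correctly note. Moreover, your parenthetical claim that ``one verifies'' $\langle\Hess(f),\nabla\theta\rangle=\theta^{\sharp}(\Delta(f))$ is itself unproven and appears to be false: a careful Bochner computation for a harmonic $\theta$ with $\theta(\grad(f))\equiv0$ gives $\langle\Hess(f),\nabla\theta\rangle=-\tfrac12\theta^{\sharp}(\Delta(f))-\Ric(\theta^{\sharp},\grad(f))$, so the two ``residuals'' you obtain from your two routes are not the same quantity; that they would coincide is something you inferred by taking \eqref{ghad} on faith.

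You should, however, be aware that the missing step is the paper's gap as much as yours. The paper deduces Proposition \ref{P:5.6} from \eqref{ghad} in one line and silently discards the term $\langle d(\Delta(f)),\theta\rangle$, with no justification for its pointwise vanishing; the hypotheses only yield the integral statement above. In the examples where the Proposition is easily confirmed (e.g.\ $\theta^{\sharp}$ proportional to a Killing direction, as on Riemannian products or warped products fibered along the $\theta$-direction, where every relevant function is constant along $\theta^{\sharp}$) one has $\theta^{\sharp}(\Delta(f))=0$ automatically, which is presumably why the omission is not conspicuous; in general it is an extra, unproven assertion. So your write-up is in effect a more careful execution of the paper's own argument, and what it shows is that either the conclusion should carry the additional term $-\theta^{\sharp}(\Delta(f))$ on its right-hand side, or a genuinely new argument --- not present in the paper, and not supplied by your Weitzenb\"{o}ck/Bianchi sketch --- is required to prove $\theta^{\sharp}(\Delta(f))=0$ under these hypotheses.
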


As consequences, we have

\begin{corollary}\label{C:5.7}
Let $(M,g,\xi:=\grad(f),\gamma)$ be a gradient almost Ricci soliton and let $\theta$ be a
closed and co-closed $1$-form on the closed Riemannian manifold $(M,g)$,
orthogonal to $\xi^{\flat}=df$.

{\rm (i)} If $Q(\theta^{\sharp})$ is divergence-free, then
the scalar curvature of $M$ is constant along each integral curve of $\theta^{\sharp}$.

{\rm (ii)} In the compact case, $\int_M\theta^{\sharp}(\scal)dv=0$, where $dv$ is the volume element of $(M,g)$.
\end{corollary}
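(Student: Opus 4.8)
The plan is to obtain Corollary \ref{C:5.7} as a direct specialization of Proposition \ref{P:5.6}. First I would observe that a gradient almost Ricci soliton $(M,g,\xi:=\grad(f),\gamma)$ is exactly a gradient generalized soliton in which $\beta=1$ and $\delta=0$, with $\gamma$ allowed to be a nonconstant smooth function. Since $\theta$ is by assumption closed and co-closed, orthogonal to $\xi^{\flat}=df$, and $(M,g)$ is closed, all the hypotheses of Proposition \ref{P:5.6} are met, so I may invoke the identity
$$\beta\div(Q(\theta^{\sharp}))=\frac{\beta}{2}\theta^{\sharp}(\scal)-\delta\,\theta(\nabla_{\grad(f)}\grad(f)).$$
Substituting $\beta=1$ and $\delta=0$ annihilates the last term and yields the clean relation
$$\div(Q(\theta^{\sharp}))=\frac{1}{2}\theta^{\sharp}(\scal).$$

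For part {\rm (i)}, I would impose the extra hypothesis that $Q(\theta^{\sharp})$ is divergence-free. Then the left-hand side of the displayed relation vanishes, forcing $\theta^{\sharp}(\scal)=0$. Reading $\theta^{\sharp}(\scal)$ as the directional derivative $d(\scal)(\theta^{\sharp})$ of the scalar curvature in the direction $\theta^{\sharp}$, this is precisely the statement that $\scal$ is constant along each integral curve of $\theta^{\sharp}$.

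For part {\rm (ii)}, I would integrate the same relation over $M$ against the volume element $dv$. Because $M$ is closed (compact and without boundary), the divergence theorem gives $\int_M\div(Q(\theta^{\sharp}))\,dv=0$, and hence $\int_M\theta^{\sharp}(\scal)\,dv=0$, as claimed.

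There is essentially no obstacle remaining at this stage: all the analytic work is already carried by Proposition \ref{P:5.6}, whose derivation in turn rests on the Bochner-type formula \eqref{ghad}. Granting that, the corollary reduces to setting the two parameters $\beta$ and $\delta$ to their Ricci-soliton values and then applying, respectively, the pointwise vanishing in {\rm (i)} and the divergence theorem in {\rm (ii)}. The only point requiring minimal care is the interpretation of $\theta^{\sharp}(\scal)$ as a directional derivative, which makes the ``constant along integral curves'' phrasing in {\rm (i)} immediate.
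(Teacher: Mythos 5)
Your proposal is correct and follows exactly the route the paper intends: the corollary is stated as an immediate consequence of Proposition \ref{P:5.6}, obtained by setting $\beta=1$, $\delta=0$ to get $\div(Q(\theta^{\sharp}))=\tfrac{1}{2}\theta^{\sharp}(\scal)$, from which (i) is the pointwise statement and (ii) follows from the divergence theorem on the closed manifold.
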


\begin{corollary}\label{C:5.8}
Let $(M,g,\xi:=\grad(f),\gamma,\delta)$ be a gradient almost $\xi^{\flat}$-Yamabe soliton and let $\theta$ be a
closed and co-closed $1$-form on the closed Riemannian manifold $(M,g)$,
orthogonal to $\xi^{\flat}=df$.
Then either the soliton is a gradient almost generalized Yamabe soliton or $\theta^{\sharp}$ is orthogonal to $\nabla_{\grad(f)}\grad(f)$.
\end{corollary}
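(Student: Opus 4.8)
The plan is to obtain the conclusion as an immediate specialization of Proposition~\ref{P:5.6}. A gradient almost $\xi^{\flat}$-Yamabe soliton is by definition a gradient generalized soliton with $\beta=0$, and the three standing hypotheses of Proposition~\ref{P:5.6}—that $M$ is closed, that $\theta$ is closed and co-closed, and that $\theta$ is orthogonal to $\xi^{\flat}=df$—are precisely those assumed in the corollary. Hence I may apply that proposition verbatim.

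First I would set $\beta=0$ in the identity
$$\beta\div(Q(\theta^{\sharp}))=\frac{\beta}{2}\theta^{\sharp}(\scal)-\delta\theta(\nabla_{\grad(f)}\grad(f))$$
furnished by Proposition~\ref{P:5.6}. Both the left-hand side and the first term on the right carry a factor $\beta$ and therefore vanish, leaving the single relation $\delta\,\theta(\nabla_{\grad(f)}\grad(f))=0$. Rewriting $\theta(\nabla_{\grad(f)}\grad(f))=g(\theta^{\sharp},\nabla_{\grad(f)}\grad(f))$ via the musical isomorphism, this becomes $\delta\, g(\theta^{\sharp},\nabla_{\grad(f)}\grad(f))=0$.

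The final step is to read off the dichotomy from this pointwise product. At each point of $M$ either $\delta=0$ or $g(\theta^{\sharp},\nabla_{\grad(f)}\grad(f))=0$. If $\delta\equiv 0$ on $M$, then together with $\beta=0$ the defining equation~(\ref{h}) reduces to $\nabla\grad(f)=\gamma\cdot I$, so the soliton is a gradient almost generalized Yamabe soliton; otherwise $\theta^{\sharp}$ is orthogonal to $\nabla_{\grad(f)}\grad(f)$, as claimed.

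I do not expect a genuine obstacle at this stage, since the entire analytic content has already been discharged in establishing Proposition~\ref{P:5.6}: the Bochner-type formula~(\ref{ghad}) and the integration by parts over the closed manifold—which annihilate the Laplacian and divergence terms precisely because $\theta$ is closed and co-closed and because $\theta^{\sharp}\in\ker df$—were used there. The only point requiring a moment's care is the interpretation of the ``either/or'': the vanishing is a pointwise statement, so strictly one concludes orthogonality of $\theta^{\sharp}$ and $\nabla_{\grad(f)}\grad(f)$ on the open set where $\delta\neq 0$, with the soliton degenerating to the generalized Yamabe type wherever $\delta=0$.
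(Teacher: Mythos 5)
Your proposal is correct and follows exactly the route the paper intends: Corollary \ref{C:5.8} is presented as an immediate consequence of Proposition \ref{P:5.6}, obtained by setting $\beta=0$ there to get $\delta\,\theta(\nabla_{\grad(f)}\grad(f))=0$ and reading off the dichotomy. Your closing remark about the pointwise nature of the either/or is a fair (and slightly more careful) observation than the paper makes, but it does not change the argument.
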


\section{Generalized solitons on Euclidean submanifolds \\as examples}\label{Section6}

For general references on Euclidean submanifolds, we refer to \cite{book73,book20}. In this section, we will present some examples of generalized solitons on Euclidean submanifolds whose potential vector fields are given by their canonical vector fields.

For a Euclidean submanifold $M$ in $\mathbb E^{m}$ with position vector field $\x$, denote by $\x^T$ and $\x^N$ the tangential and normal components of  $\x$, respectively.
Thus, we have
\begin{align} \label{6.1} \x=\x^T+\x^N.\end{align}
The tangent vector field $\x^{T}$ on $M$ is called the {\it canonical vector field} of $M$.

Let $\phi:(M,g) \to (\mathbb E^m,\tilde g)$ be an isometric immersion of a Riemannian $n$-manifold $(M,g)$ into the Euclidean $m$-space $(\mathbb E^m,\tilde g)$.
We denote by $\nabla$ and $\widetilde\nabla$ the Levi-Civita connections of $(M,g)$ and of $(\mathbb E^m,\tilde g)$, respectively.
Then the formula of Gauss and the formula of Weingarten are given respectively by \begin{align} &\label{6.2}\widetilde \nabla_XY=\nabla_XY+h(X,Y),
\\& \label{6.3}\widetilde \nabla_X \zeta=-A_\zeta X+D_X\zeta,\end{align}
for vector fields $X,Y$ tangent to $M$ and $\zeta$ normal to $M$, where $h$ is the second fundamental form, $A$ the shape operator, and $D$ the normal connection of $\phi$.

The shape operator $A$ and the second fundamental form $h$ are related by
 \begin{align} &\nonumber\tilde g(h(X,Y),\zeta)=g(A_{\zeta}X,Y),\end{align}
 and the {\it mean curvature vector}  $H$ of $\phi$ is given by
   \begin{align}\nonumber H=\(\frac{1}{n}\){\rm Trace}\, (h).\end{align}

A submanifold $M$ is called {\it totally umbilical} if its second fundamental form $h$ satisfies
$ h(X,Y)=g(X,Y)H$, for any vector fields $X,Y$ tangent to $M$.

A hypersurface $M$ of $\mathbb E^{n+1}$ is called {\it quasi-umbilical} if its shape operator admits an eigenvalue, said $\kappa$, with multiplicity $mult(\kappa)\geq n-1$ (cf. \cite[page 147]{book73}). On the open subset $U$ of $M$ on which $mult(\kappa)=n-1$, an eigenvector with eigenvalue of multiplicity one is called a {\it distinguished principal direction} of the hypersurface $M$.

\begin{lemma} For a Euclidean submanifold $M$ in $\mathbb E^{m}$, we have
\begin{align} \label{6.6}& \nabla_X \x^T=A_{\x^N}X+X,
\\&\label{6.7} h(X, \x^T)=-D_X \x^N.\end{align}
\end{lemma}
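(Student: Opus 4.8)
The plan is to start from the single defining property of the position vector field of a flat ambient space, namely that $\widetilde\nabla_X \x = X$ for every vector field $X$ tangent to $M$. This identity holds because, writing $\x=\sum_i x_i e_i$ in the standard affine coordinates of $\mathbb E^m$, one has $\widetilde\nabla_X\x=\sum_i(Xx_i)e_i=X$, the flat connection simply differentiating the coordinate functions. Everything else follows by projecting this identity onto the tangent and normal bundles of $M$.

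First I would decompose $\x=\x^T+\x^N$ as in \eqref{6.1} and apply the ambient connection, obtaining $X=\widetilde\nabla_X\x=\widetilde\nabla_X\x^T+\widetilde\nabla_X\x^N$. Since $\x^T$ is tangent to $M$, the Gauss formula \eqref{6.2} gives $\widetilde\nabla_X\x^T=\nabla_X\x^T+h(X,\x^T)$; since $\x^N$ is normal to $M$, the Weingarten formula \eqref{6.3} gives $\widetilde\nabla_X\x^N=-A_{\x^N}X+D_X\x^N$. Substituting these two expressions into the previous display expresses $X$ as a sum of tangential terms ($\nabla_X\x^T$ and $-A_{\x^N}X$) and normal terms ($h(X,\x^T)$ and $D_X\x^N$).

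Finally I would compare tangential and normal components on the two sides. Because $X$ itself is tangent, the normal parts must cancel, which yields $h(X,\x^T)+D_X\x^N=0$, i.e.\ \eqref{6.7}; the tangential parts give $X=\nabla_X\x^T-A_{\x^N}X$, i.e.\ \eqref{6.6} after rearranging. There is no serious obstacle here: the only point requiring care is the justification of $\widetilde\nabla_X\x=X$, which rests on the flatness of $\mathbb E^m$ and is the conceptual heart of the argument. Once it is in hand, the two stated formulas are simply the tangential and normal shadows of one equation.
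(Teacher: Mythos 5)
Your proposal is correct and follows essentially the same route as the paper: both start from the concurrence property $\widetilde\nabla_X\x=X$, expand $\widetilde\nabla_X\x^T+\widetilde\nabla_X\x^N$ via the Gauss and Weingarten formulas, and compare tangential and normal components. Your additional coordinate justification of $\widetilde\nabla_X\x=X$ is a welcome detail the paper only cites as well known.
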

\begin{proof} It is well-known that the position vector field $\x$ of $M$ in $\mathbb E^{m}$ is a concurrent vector field, i.e., $\x$ satisfies
\begin{align} \label{6.8} \widetilde\nabla_X\x=X,\end{align}
for any vector field $X$ tangent to $M$.
It follows from \e{6.1}, \e{6.2}, \e{6.3} and \e{6.8} that
\begin{equation}\begin{aligned} \label{6.9} X&=\widetilde \nabla_X \x^T+\widetilde\nabla_X \x^N=
 \nabla_X \x^T+h(X,\x^T)-A_{\x^N}X+D_X \x^N.
\end{aligned}\end{equation} After comparing the tangential and normal components from \e{6.9}, we obtain \e{6.6} and \e{6.7}.
\end{proof}

It follows from \e{6.6} and the definition of the Lie derivative that
\begin{equation}\begin{aligned} \nonumber({\mathcal L}_{\x^T}g)(X,Y)=2g(X,Y)+2g(A_{\x^N}X,Y),
\end{aligned}\end{equation} for any vector fields $X,Y$ tangent to $M$. After combining \e{generalsol} and \e{6.11}, we obtain the following.

\begin{theorem}\label{T:6.2} Let $M$ be a submanifold of $\mathbb E^{m}$. Then $(M,g,\x^T,\beta,\gamma,\delta)$ is a generalized soliton if and only if the Ricci tensor of $M$ satisfies
\begin{equation}\label{6.11}
\beta\,\Ric(X,Y)=(\gamma-1)g(X,Y)+\delta\, g(\x^{T},X)g(\x^{T},Y)-g(A_{\x^N}X,Y),
\end{equation} for any vector fields $X,Y$ tangent to $M$.
\end{theorem}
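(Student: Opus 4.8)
The plan is to evaluate the generalized soliton equation \e{generalsol} on an arbitrary pair of tangent vector fields $X,Y$ and then substitute the Lie-derivative identity just established above. Since every manipulation involved is an equivalence, both directions of the ``if and only if'' will follow at once, so I need only transform one side into the other.

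First I would recall from \e{6.6} that $\nabla_X\x^T=A_{\x^N}X+X$. Combined with the definition of the Lie derivative, $({\mathcal L}_{\x^T}g)(X,Y)=g(\nabla_X\x^T,Y)+g(X,\nabla_Y\x^T)$, and the fact that the shape operator $A_{\x^N}$ is $g$-self-adjoint (so that $g(A_{\x^N}X,Y)=g(X,A_{\x^N}Y)$), this yields the displayed formula
$$({\mathcal L}_{\x^T}g)(X,Y)=2g(X,Y)+2g(A_{\x^N}X,Y).$$
I would also note that, since $\eta=(\x^T)^{\flat}$, the rank-one term evaluates as $(\eta\otimes\eta)(X,Y)=g(\x^T,X)\,g(\x^T,Y)$.

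Next I would insert both of these into \e{generalsol} evaluated on $(X,Y)$. The factor $\tfrac12$ cancels the $2$'s in the Lie-derivative term, giving
$$g(X,Y)+g(A_{\x^N}X,Y)+\beta\,\Ric(X,Y)=\gamma\,g(X,Y)+\delta\,g(\x^T,X)\,g(\x^T,Y).$$
Collecting $\beta\,\Ric(X,Y)$ on the left-hand side and moving the remaining terms to the right produces exactly \e{6.11}. Because this rearrangement is reversible, equation \e{6.11} holding for all tangent $X,Y$ is in turn equivalent to \e{generalsol}, which completes the argument.

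I do not expect a genuine obstacle here: the whole content is the substitution of \e{6.6} into the Lie derivative of $g$. The only point demanding any care is invoking the self-adjointness of $A_{\x^N}$, which is precisely what lets the two cross terms $g(A_{\x^N}X,Y)$ and $g(X,A_{\x^N}Y)$ merge into the single coefficient $-g(A_{\x^N}X,Y)$ appearing in \e{6.11}.
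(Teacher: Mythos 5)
Your argument is correct and is essentially the paper's own proof: the paper likewise derives $({\mathcal L}_{\x^T}g)(X,Y)=2g(X,Y)+2g(A_{\x^N}X,Y)$ from \e{6.6} and the self-adjointness of the shape operator, then substitutes into \e{generalsol} and rearranges to obtain \e{6.11}. No further comment is needed.
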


If the Euclidean submanifold $M$ lies in the unit hypersphere $S_{o}^{m-1}(1)$ of $\mathbb E^{m}$ centered at the origin $o\in\mathbb E^{m}$, then we have $\x^{N}=\x$ and $A_{\x^N}=-I$, where $I$ denotes the identity map. Thus, in this case, \e{6.11} reduces to
\begin{equation}\label{6.12} \beta\,\Ric(X,Y)=\gamma\,g(X,Y),\end{equation}
for any vector fields $X,Y$ tangent to $M$.

Consequently, we have the following.

\begin{corollary}\label{C:6.3} If a submanifold $M$ is contained in the unit hypersphere $S_{o}^{m-1}(1)$ of $\mathbb E^{m}$ centered at the origin $o\in \mathbb E^{m}$, then $(M,g,\x^T,\beta,\gamma,\delta)$ with $\beta$ nowhere zero on $M$ is a generalized soliton if and only if $(M,g)$ is an Einstein manifold. In this case, the ratio $\gamma:\beta$ is a constant.
\end{corollary}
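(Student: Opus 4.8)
The plan is to obtain the statement as an immediate specialization of Theorem~\ref{T:6.2}, followed by one classical curvature fact (the Schur lemma). The computations involved are short, so essentially all of the work lies in correctly inserting the geometric data of the hypersphere into the soliton characterization and then in justifying that the proportionality factor between $\Ric$ and $g$ is constant.

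First I would record the two identities valid when $M$ is contained in the unit hypersphere $S_o^{m-1}(1)$ centered at the origin. Since $|\x|^2$ is then constant on $M$, differentiating along any tangent vector and using the concurrency relation \eqref{6.8} shows that $\x$ is everywhere normal to $M$; hence $\x^T=0$ and $\x^N=\x$. Moreover, taking the tangential component of $\widetilde\nabla_X\x=X$ through the Weingarten formula \eqref{6.3} gives $-A_{\x}X=X$, i.e.\ $A_{\x^N}=-I$. Substituting these into \eqref{6.11}, the $\delta$-term vanishes because $\x^T=0$, while $-g(A_{\x^N}X,Y)=g(X,Y)$, so that the generalized soliton condition collapses to the reduced identity \eqref{6.12}, namely $\beta\,\Ric=\gamma\,g$.

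Next I would use that $\beta$ is nowhere zero to divide \eqref{6.12} by $\beta$, obtaining $\Ric=(\gamma/\beta)\,g$, which exhibits $(M,g)$ as an Einstein manifold with Einstein function $\gamma/\beta$. Conversely, if $(M,g)$ is Einstein with $\Ric=c\,g$, then \eqref{6.12} holds upon setting $\gamma=c\beta$, so by Theorem~\ref{T:6.2} the data define a generalized soliton; this gives the stated equivalence. To see that the ratio $\gamma:\beta$ is constant, I would invoke the Schur lemma: on a connected manifold of dimension $n\geq 3$, the contracted second Bianchi identity $\div(\Ric)=\tfrac12\,d(\scal)$ applied to $\Ric=(\gamma/\beta)\,g$ forces $(1-\tfrac{n}{2})\,d(\gamma/\beta)=0$, whence $\gamma/\beta$ is constant.

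I do not expect a genuine obstacle, since every step is either a direct substitution or a standard consequence of Bianchi's identity; the only point demanding care is the dimensional hypothesis $n\geq 3$, which is indispensable for the final constancy claim. Indeed, in dimension two every metric satisfies $\Ric=\tfrac12\,\scal\cdot g$ with $\scal$ possibly nonconstant, so $\gamma/\beta$ need not be constant there. I would therefore make the standing assumption $n\geq 3$ explicit when stating the constancy of the ratio $\gamma:\beta$.
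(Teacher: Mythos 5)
Your proposal is correct and follows essentially the same route as the paper: substitute $\x^{T}=0$ and $A_{\x^{N}}=-I$ into \eqref{6.11} to obtain $\beta\,\Ric=\gamma\,g$, divide by the nowhere-zero $\beta$, and conclude via Schur's lemma. The only difference is that you make explicit the Schur-lemma step and the connectedness/dimension hypothesis $n\geq 3$ needed for the constancy of $\gamma:\beta$, which the paper leaves implicit (it is a standing assumption from Section \ref{Section4}).
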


If $\beta=0$, \e{6.12} also gives the following.

\begin{corollary}\label{C:6.4} Let $M$ be a submanifold lying in a hypersphere $S_{o}^{m-1}$ of $\mathbb E^{m}$ centered at the origin $o\in\mathbb E^{m}$. If $\beta=0$, then $(M,g,\x^T,\beta,\gamma,\delta)$ is a generalized soliton if and only if $\gamma=0$.\end{corollary}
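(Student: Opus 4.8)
The plan is to derive the result directly from the specialized identity \e{6.12}, which already captures the relevant geometry. First I would observe that the passage from \e{6.11} to \e{6.12} rests on only two facts about a submanifold inside a hypersphere centered at the origin: that the position vector is normal, so $\x^{N}=\x$ and $\x^{T}=0$, and that $A_{\x^N}=-I$. I would point out that both hold for a hypersphere of \emph{arbitrary} radius, not just the unit one, since they follow from the concurrency relation \e{6.8}. Indeed, constancy of $|\x|^{2}$ along $M$ gives $g(X,\x^{T})=0$ for every tangent vector $X$, whence $\x^{T}=0$; and comparing the tangential parts of $\widetilde\nabla_X\x=X$ then yields $A_{\x^N}X=-X$. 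Thus \e{6.12}, namely $\beta\,\Ric(X,Y)=\gamma\,g(X,Y)$, is available verbatim here.

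With \e{6.12} in place, I would simply set $\beta=0$ to obtain
\[
\gamma\,g(X,Y)=0
\]
for all vector fields $X,Y$ tangent to $M$. For the forward implication, if $(M,g,\x^T,0,\gamma,\delta)$ is a generalized soliton then this identity holds, and taking $X=Y$ to be a nonzero tangent vector (which exists since $n\geq 3$) and invoking the positive-definiteness of $g$ forces $\gamma=0$. For the converse, if $\gamma=0$ the identity \e{6.12} becomes the trivial $0=0$, so by Theorem \ref{T:6.2} the soliton equation \e{6.11} holds automatically for every $\delta$, and the triple defines a generalized soliton.

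I do not expect a substantive obstacle, since the conclusion is a one-line algebraic consequence of \e{6.12}. The only place where a little care is needed is verifying that \e{6.12} persists for a hypersphere of general radius; this is exactly why the hypothesis may be phrased for an arbitrary $S_o^{m-1}$ rather than the unit sphere $S_o^{m-1}(1)$ used in the immediately preceding discussion.
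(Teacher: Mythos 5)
Your proposal is correct and follows essentially the same route as the paper, which simply specializes \eqref{6.12} to $\beta=0$ and reads off $\gamma\,g(X,Y)=0$, hence $\gamma=0$ by positive-definiteness of $g$. Your extra remark that $\x^{T}=0$ and $A_{\x^N}=-I$ (and hence \eqref{6.12}) persist for a hypersphere of arbitrary radius is a worthwhile clarification, since the paper derives \eqref{6.12} only for $S_o^{m-1}(1)$ but states the corollary for a general $S_o^{m-1}$.
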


If $M$ is a hypersurface of $\mathbb E^{m}$, then Theorem \ref{T:6.2} implies the following.

\begin{corollary}\label{C:6.5} Let $M$ be a hypersurface of $\mathbb E^{m}$ with $m>4$. If $(M,g,\x^T,\beta,\gamma,\delta)$ with $\delta = 0$ is a generalized soliton, then $M$ is a quasi-umbilical hypersurface. In particular, $M$ is a conformally flat space.
\end{corollary}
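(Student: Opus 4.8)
The plan is to reduce the soliton condition, through Theorem \ref{T:6.2}, to a purely algebraic constraint on the shape operator of the hypersurface, and then feed that constraint into the classical theory of quasi-umbilical hypersurfaces of space forms.

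First I would specialize Theorem \ref{T:6.2} to $\delta=0$, which gives
$$\beta\,\Ric(X,Y)=(\gamma-1)g(X,Y)-g(A_{\x^N}X,Y).$$
Since $M$ is a hypersurface, its normal bundle is a line bundle: choosing a unit normal $\nu$ and writing $\x^N=\phi\,\nu$ with $\phi:=\tilde g(\x,\nu)$ the support function, we have $A_{\x^N}=\phi A$, where $A:=A_\nu$ is the scalar shape operator and $h(X,Y)=g(AX,Y)$ the second fundamental form. Hence, on the open set where $\beta\neq0$, the relation reads $\Ric=a\,g+b\,h$ with $a=(\gamma-1)/\beta$ and $b=-\phi/\beta$, so the Ricci operator is the affine function $aI+bA$ of the shape operator; in particular $\Ric$ and $A$ are simultaneously diagonalizable.

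Next I would bring in the Gauss equation for a hypersurface of flat space, which at the operator level reads $\Ric=(\trace A)A-A^2$. Equating the two expressions for $\Ric$ yields the operator identity $aI+bA=(\trace A)A-A^2$, i.e.
$$A^2-((\trace A)-b)A+a\,I=0,$$
so that at each point the shape operator satisfies a degree-two polynomial and therefore has at most two distinct principal curvatures. It then remains to upgrade this to quasi-umbilicity, namely to show that one principal curvature has multiplicity $\geq n-1$; here I would appeal to the Cartan–Schouten theory of hypersurfaces recorded in \cite{book73}, which is available precisely because $n=m-1\geq4$. Once quasi-umbilicity is established, the final assertion is immediate, since every quasi-umbilical hypersurface of a space form is conformally flat.

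The hard part will be exactly this multiplicity statement. The quadratic identity by itself only forces two principal curvatures and still permits a balanced splitting of the multiplicities into two groups each of size $\geq2$; a direct computation of the Weyl tensor in the eigenframe of $A$ shows that for such a $(p,q)$ configuration conformal flatness holds if and only if $(p-1)(q-1)=0$, so excluding the balanced case genuinely requires differential information, namely the Codazzi equation (equivalently, the vanishing of the Weyl tensor), and this is where the hypothesis $m>4$, i.e. $n\geq4$, is essential. I would therefore devote the bulk of the argument to this point, quoting the relevant structure theorem from \cite{book73}; by contrast, the reduction to $\Ric=a\,g+b\,h$ and the resulting quadratic for $A$ are routine once Theorem \ref{T:6.2} and the Gauss equation are in hand.
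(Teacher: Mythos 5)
Your reduction of \eqref{6.11} with $\delta=0$ to the pointwise operator identity $\beta A^{2}-(\beta\trace A+\phi)A+(\gamma-1)I=0$ (on the set where $\beta\neq0$), hence to ``at most two distinct principal curvatures'', is correct and is in fact all that the paper itself extracts from Theorem \ref{T:6.2} --- the corollary is stated there with no further argument. But the step you defer to the literature, upgrading ``two principal curvatures'' to ``one of multiplicity $\geq n-1$'', is precisely where the proof breaks, and it cannot be repaired. The Cartan--Schouten theorem recorded in \cite{book73} says that a hypersurface of $\mathbb E^{n+1}$, $n\geq4$, is conformally flat if and only if it is quasi-umbilical; it does \emph{not} say that a hypersurface with two distinct principal curvatures is quasi-umbilical, and the Codazzi equation (which holds automatically for every Euclidean hypersurface and is not ``equivalent to the vanishing of the Weyl tensor'') supplies no such rigidity --- your appeal to it is circular, since conformal flatness is the conclusion, not a hypothesis. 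Concretely, for the spherical hypercylinder $S^{2}(1)\times\mathbb E^{2}\subset\mathbb E^{5}$ one computes $\tfrac12{\mathcal L}_{\x^T}g+\Ric=g$, so $(M,g,\x^{T},\beta=1,\gamma=1,\delta=0)$ is a generalized soliton (indeed a Ricci soliton; it is case (5) of the paper's own Theorem \ref{T:6.6} with $k=2$, $n=4$), yet its principal curvatures have multiplicities $(2,2)$, so it is neither quasi-umbilical nor conformally flat. The ``balanced'' case you were worried about genuinely occurs, so the statement itself needs an additional hypothesis before any version of your multiplicity argument can close.

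A secondary point: the corollary does not assume $\beta$ nowhere zero, whereas your argument works only on the open set $\{\beta\neq0\}$. At points where $\beta=0$ and the support function $\phi=\tilde g(\x,\nu)$ also vanishes, \eqref{6.11} with $\delta=0$ reduces to $\gamma=1$ and imposes no condition at all on the shape operator, so even the ``two principal curvatures'' conclusion is not available there.
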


The next classification theorem was proved in \cite[Theorem 6.1]{cd14}.

\begin{theorem} \label{T:6.6} Let $(g,\x^T,\delta)$ define a Ricci soliton on a hypersurface $M^n$ of $\mathbb E^{n+1}$. Then $M$ is one of the following hypersurfaces of $\mathbb E^{n+1}$:
\\
{\rm (1)} a hyperplane through the origin;\\
{\rm (2)} a hypersphere centered at the origin;\\
{\rm (3)} an open part of a flat hypersurface generated by lines through the origin; \\
{\rm (4)} an open part of a circular hypercylinder $S^1(r)\times \mathbb E^{n-1}$, $r>0$;\\
{\rm (5)} an open part of a spherical hypercylinder $S^k(\sqrt{k-1})\times \mathbb E^{n-k}$, $2\leq k\leq n-1$.\end{theorem}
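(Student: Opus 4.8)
The statement is a rigidity classification, so the plan is to push the Ricci-soliton hypothesis through Theorem~\ref{T:6.2} and then exploit that $M$ has codimension one. A Ricci soliton is the case $\beta=1$, $\delta=0$ of \eqref{generalsol}, the soliton constant being a real number which I will call $\gamma$; substituting into \eqref{6.11} gives
\[
\Ric(X,Y)=(\gamma-1)g(X,Y)-g(A_{\x^N}X,Y)
\]
for all $X,Y$ tangent to $M$. Since $M$ is a hypersurface, I would fix a local unit normal $\nu$ and let $\rho:=\tilde g(\x,\nu)$ be the support function, so that $\x^N=\rho\,\nu$ and hence $A_{\x^N}=\rho\,A_\nu$, where $A_\nu$ is the (symmetric) shape operator of $M$ in $\mathbb E^{n+1}$.

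First I would invoke the Gauss equation for a hypersurface of a flat ambient space, namely $\Ric(X,Y)=(\trace A_\nu)\,g(A_\nu X,Y)-g(A_\nu^{\,2} X,Y)$. Equating this with the soliton expression above and using nondegeneracy of $g$ to cancel the common pairing yields the endomorphism identity
\[
A_\nu^{\,2}-(\trace A_\nu+\rho)\,A_\nu+(\gamma-1)\,I=0 .
\]
Thus at every point the minimal polynomial of the symmetric operator $A_\nu$ divides a fixed quadratic, so $A_\nu$ is diagonalizable with \emph{at most two} distinct principal curvatures, the two roots of $t^2-(\trace A_\nu+\rho)t+(\gamma-1)$. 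This is the structural engine of the argument, and it already explains, via Corollary~\ref{C:6.5}, why $M$ must be quasi-umbilical.

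The classification then proceeds by the number of distinct principal curvatures. If $A_\nu$ has a single eigenvalue, $M$ is totally umbilical, hence an open part of a hyperplane or a round hypersphere; here I would use that the potential field is the tangential part of the position vector $\x$, a concurrent field satisfying \eqref{6.8}, to locate the center at the origin, producing cases (1) and (2). If $A_\nu$ has exactly two distinct eigenvalues $\lambda_1\neq\lambda_2$ with multiplicities $p$ and $q=n-p$, I would split $TM$ into the two eigendistributions and, using the Codazzi equation together with the concurrent structure, show that the principal curvatures are constant along the relevant leaves and that the distributions integrate to affine subspaces through the origin or to round spheres. This identifies $M$ with a ruled hypersurface generated by lines through the origin, a circular hypercylinder, or a spherical hypercylinder; the admissible radii (in particular $\sqrt{k-1}$ for the factor $S^k$) are then forced by matching the constant $\gamma$ against the Einstein value of the spherical factor, giving cases (3)--(5).

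The algebraic step---that $A_\nu$ carries at most two principal curvatures---is immediate from the quadratic identity. The main obstacle is the global integration: upgrading this pointwise information to the explicit product and ruled models. Concretely, one must prove that the two principal curvatures are constant, that the eigendistributions are integrable with totally geodesic leaves of the expected type, and that these assemble into a metric product; this is in effect a rigidity argument for hypersurfaces with two constant principal curvatures (of isoparametric type), combined with the use of the position vector to pin down both the center and the exact radii. This is precisely the content of \cite[Theorem 6.1]{cd14}, on which the stated theorem relies.
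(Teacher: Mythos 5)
The paper does not actually prove this theorem: it is imported verbatim, with the single line ``The next classification theorem was proved in \cite[Theorem 6.1]{cd14}.'' So there is no internal argument to measure you against, and your proposal, which also ends by deferring the substantive work to \cite{cd14}, is in that sense faithful to the paper's treatment. What you add beyond the paper is correct and genuinely the right starting point: combining \eqref{6.11} with $\beta=1$, $\delta=0$, the identification $A_{\x^N}=\rho A_\nu$ for the support function $\rho=\tilde g(\x,\nu)$, and the Gauss equation $\Ric(X,Y)=(\trace A_\nu)\,g(A_\nu X,Y)-g(A_\nu^2X,Y)$ does yield the pointwise quadratic $A_\nu^2-(\trace A_\nu+\rho)A_\nu+(\gamma-1)I=0$, hence at most two principal curvatures at each point. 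But be clear that as a proof your text is incomplete: everything after that identity --- constancy of the principal curvatures, integrability and total geodesy of the eigendistributions, the reduction to the five explicit models, and the computation forcing the radius $\sqrt{k-1}$ --- is only narrated, and that is the entire content of the theorem. You acknowledge this and cite \cite{cd14}, which is acceptable here precisely because the paper does the same.

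One concrete error to fix: the claim that the quadratic identity ``already explains, via Corollary \ref{C:6.5}, why $M$ must be quasi-umbilical'' does not hold. Having at most two distinct eigenvalues of $A_\nu$ at each point is strictly weaker than quasi-umbilicity, which requires one eigenvalue of multiplicity at least $n-1$; the spherical hypercylinders $S^k(\sqrt{k-1})\times\mathbb E^{n-k}$ with $2\leq k\leq n-2$ appearing in case (5) carry two principal curvatures each of multiplicity at least $2$. So the quadratic relation cannot be the mechanism behind Corollary \ref{C:6.5}, and this sentence should be deleted or replaced by the correct observation that the identity only bounds the number of distinct principal curvatures, not their multiplicities.
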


After combining this result with Theorem \ref{te44}, we have the following.

\begin{proposition} For each hypersurface $M$ of the 5 cases listed in Theorem \ref{T:6.6}, the dual $1$-form $(\x^{T})^{\flat}$ of the canonical vector field $\x^{T}$ of $M$ is  Schr\"{o}dinger-Ricci harmonic.
\end{proposition}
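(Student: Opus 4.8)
The plan is to obtain the statement as a direct combination of the classification in Theorem \ref{T:6.6} with Theorem \ref{te44}. The key point is that Theorem \ref{T:6.6} is, in effect, a complete list: the five hypersurfaces are precisely those on which the canonical vector field $\x^T$ generates a Ricci soliton, so for each of them the triple $(g,\x^T,\gamma)$ defines a genuine Ricci soliton for some constant $\gamma$ (equivalently, a generalized soliton with $\beta=1$, $\delta=0$, and $\gamma$ constant). Granting this, Theorem \ref{te44} applies with $\xi=\x^T$ and yields at once that $(\x^T)^{\flat}$ is a Schr\"{o}dinger-Ricci harmonic form in every case.

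The one step that deserves care is confirming that each member of the list genuinely carries this soliton structure with potential $\x^T$, since Theorem \ref{T:6.6} as quoted only asserts the forward implication. I would check this case by case using Theorem \ref{T:6.2}: after setting $\beta=1$ and $\delta=0$, the soliton condition reduces to $\Ric(X,Y)=(\gamma-1)g(X,Y)-g(A_{\x^N}X,Y)$, and one verifies that this holds with a constant $\gamma$. For the hyperplane through the origin and the hypersphere centered at the origin this is transparent, since $A_{\x^N}$ is a constant multiple of the identity (namely $0$ and $-I$, respectively) and the Ricci tensor is proportional to $g$; for the flat hypersurface generated by lines through the origin and for the two cylindrical examples $S^1(r)\times\mathbb E^{n-1}$ and $S^k(\sqrt{k-1})\times\mathbb E^{n-k}$, one substitutes the explicit shape operator together with the product curvature to obtain the same identity with a constant $\gamma$.

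The main obstacle is thus purely the bookkeeping in this verification, not the final deduction: once each hypersurface is matched to the Ricci soliton framework, Theorem \ref{te44} applies verbatim and independently of the ambient geometry, because that theorem holds for every Ricci soliton whatever its scalar curvature. Consequently $(\x^T)^{\flat}$ is Schr\"{o}dinger-Ricci harmonic for all five hypersurfaces listed in Theorem \ref{T:6.6}.
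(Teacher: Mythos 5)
Your overall strategy is exactly the paper's: its entire proof consists of the single remark that Theorem \ref{T:6.6} combined with Theorem \ref{te44} yields the statement. You are also right, and more careful than the paper, to point out that Theorem \ref{T:6.6} as quoted is only a necessary condition, so one must confirm that each listed hypersurface actually carries a Ricci soliton structure with potential $\x^{T}$ before Theorem \ref{te44} can be applied.

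The gap is that the case-by-case verification you claim would succeed in fact fails for case (4). On $S^1(r)\times\mathbb E^{n-1}$ the induced metric is flat, so $\Ric=0$, while $A_{\x^N}=-I$ on the $S^1$ directions and $A_{\x^N}=0$ on the $\mathbb E^{n-1}$ directions; by \eqref{6.6} this gives $\nabla\x^{T}=0$ on the circle factor and $\nabla\x^{T}=I$ on the Euclidean factor, so $\tfrac12{\mathcal L}_{\x^{T}}g$ is the orthogonal projection onto the $\mathbb E^{n-1}$ factor. Condition \eqref{6.11} with $\beta=1$, $\delta=0$ then forces $\gamma=0$ in the circle direction and $\gamma=1$ in the Euclidean directions, a contradiction; hence $(g,\x^{T},\gamma)$ is not a Ricci soliton on the circular hypercylinder for any $\gamma$, and Theorem \ref{te44} simply does not apply there. (The same computation explains why case (5) requires the radius $\sqrt{k-1}$ with $k\ge 2$: one needs $(k-1)/R^2=1$ on the spherical factor.) The conclusion of the Proposition is still true for case (4), but it needs a direct argument: there $(\x^{T})^{\flat}=\tfrac12\,d|\x^{T}|^2$ is exact, the Laplacian of its potential function is constant, and $\Ric_{\sharp}=0$, so $(\x^{T})^{\flat}$ is harmonic and a fortiori Schr\"odinger--Ricci harmonic. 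Your argument is fine for the other four cases (hyperplane through the origin and cone: $\Ric=0$, $A_{\x^N}=0$, $\gamma=1$; hypersphere centered at the origin: $\x^{T}=0$, $\gamma=(n-1)/r^2$; spherical hypercylinder: $\gamma=1$), but case (4) requires this separate treatment --- a point the paper's own one-line proof also passes over in silence.
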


Recall that a vector field on a Riemannian manifold is called {\it conservative} if it is the gradient of some function, known as a {\it scalar potential}, and it is called {\it incompressible} if it is divergence-free.

The next result was obtained in \cite{chen17}.

\begin{theorem}\label{T:6.8} Let $M$ be a submanifold of $\mathbb{E}^m$.
Then we have:\\
{\rm (1)} the canonical vector field $\x^{T}$ of $M$ is always conservative;\\
{\rm (2)} $\x^{T}$ is incompressible if and only if $\<\x,H\>=-1$ holds identically on $M$.
\end{theorem}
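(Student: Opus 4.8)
The plan is to derive both statements directly from the two structural facts already established, namely the concurrency $\widetilde\nabla_X\x=X$ in \e{6.8} and the formula $\nabla_X\x^T=A_{\x^N}X+X$ in \e{6.6}. For part (1) I would exhibit an explicit scalar potential rather than argue abstractly: the natural candidate is the function $f:=\tfrac12|\x|^2$ on $M$, half the squared distance to the origin. To verify that $\grad(f)=\x^T$, I would differentiate along an arbitrary tangent vector field $X$, obtaining $X(f)=\tilde g(\widetilde\nabla_X\x,\x)=\tilde g(X,\x)$ by \e{6.8}; since $X$ is tangent while $\x^N$ is normal, the orthogonal decomposition \e{6.1} gives $\tilde g(X,\x)=g(X,\x^T)$. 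Hence $df(X)=g(X,\x^T)$ for every $X$, which is exactly the assertion $\grad(f)=\x^T$, so $\x^T$ is conservative.

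For part (2) I would compute $\div(\x^T)$ as the trace of the endomorphism $X\mapsto\nabla_X\x^T$. By \e{6.6} this endomorphism is $A_{\x^N}+I$, whence $\div(\x^T)=\trace(A_{\x^N})+n$. The one substantive step is to identify $\trace(A_{\x^N})$ with the mean curvature: using $\tilde g(h(X,Y),\zeta)=g(A_\zeta X,Y)$ and a local orthonormal frame $\{E_i\}$, I get $\trace(A_{\x^N})=\sum_i\tilde g(h(E_i,E_i),\x^N)=n\,\tilde g(H,\x^N)$, and because $H$ is normal, \e{6.1} yields $\tilde g(H,\x^N)=\tilde g(H,\x)=\<\x,H\>$. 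Combining these gives $\div(\x^T)=n(\<\x,H\>+1)$. Therefore $\x^T$ is incompressible, i.e. $\div(\x^T)=0$, if and only if $\<\x,H\>=-1$ holds identically on $M$.

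The argument is entirely routine once \e{6.6} and the concurrency \e{6.8} are in hand, and I do not expect a genuine obstacle. The only point requiring a little care is the consistent bookkeeping of tangential versus normal components, that is, freely replacing $\x$ by $\x^T$ under pairings with tangent vectors and by $\x^N$ under pairings with normal vectors; once this is handled, both conclusions follow by a direct trace computation.
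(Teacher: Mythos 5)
Your proof is correct: part (1) follows from $\x^{T}=\grad\bigl(\tfrac12|\x|^2\bigr)$ via the concurrency relation \eqref{6.8}, and part (2) from tracing \eqref{6.6} to get $\div(\x^{T})=n\bigl(\<\x,H\>+1\bigr)$. The paper itself gives no proof of Theorem \ref{T:6.8} — it simply imports the result from \cite{chen17} — and your argument is precisely the standard one used there, so there is nothing to compare beyond noting that your version is self-contained.
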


Theorem \ref{T:6.8} implies the following.

\begin{corollary}\label{C:6.9} Let $M$ be a compact submanifold of $\mathbb E^{m}$. Then the dual $1$-form $(\x^{T})^{\flat}$ of the canonical vector field $\x^{T}$ of $M$ is harmonic if and only if $\<\x,H\>=-1$ holds identically on $M$.\end{corollary}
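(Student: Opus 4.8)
The plan is to read off the harmonicity of $(\x^{T})^{\flat}$ from the behaviour of $\x^{T}$ as a vector field and then feed the outcome into Theorem \ref{T:6.8}. First I would use Theorem \ref{T:6.8}(1): the canonical vector field is conservative, so $\x^{T}=\grad(f)$ for a globally defined potential; explicitly $f=\tfrac{1}{2}|\x|^{2}$, since for $X$ tangent to $M$ the concurrency relation \eqref{6.8} gives $X\bigl(\tfrac12|\x|^{2}\bigr)=\<\widetilde\nabla_X\x,\x\>=\<X,\x^{T}\>$ (using that $X$ is tangent). Consequently $(\x^{T})^{\flat}=df$ is an exact, hence closed, $1$-form.

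Next I would bring in compactness through the Green identity
$$\int_M\langle\Delta\theta,\theta\rangle\,dv=\int_M\bigl(|d\theta|^{2}+(\div(\theta^{\sharp}))^{2}\bigr)\,dv,$$
valid for every $1$-form $\theta$ on a compact boundaryless Riemannian manifold (the codifferential of a $1$-form being minus the divergence of its dual field). It shows that $\Delta\theta=0$ precisely when $\theta$ is simultaneously closed and satisfies $\div(\theta^{\sharp})=0$. Applied to $\theta=(\x^{T})^{\flat}$, whose closedness was just established, this collapses the harmonicity $\Delta((\x^{T})^{\flat})=0$ to the single requirement $\div(\x^{T})=0$, i.e. to the incompressibility of $\x^{T}$. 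This is consistent with the remark in Section \ref{Section5} that an exact form $df$ is harmonic iff $f$ is harmonic, here for $f=\tfrac12|\x|^2$.

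Finally I would close the chain with Theorem \ref{T:6.8}(2), which states that $\x^{T}$ is incompressible if and only if $\<\x,H\>=-1$ holds identically on $M$. Concatenating the equivalences yields $(\x^{T})^{\flat}$ harmonic $\Longleftrightarrow$ $\div(\x^{T})=0$ $\Longleftrightarrow$ $\<\x,H\>=-1$, which is the assertion.

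The only genuinely delicate point is the compactness hypothesis: it is exactly what powers the Green identity and hence the equivalence between $\Delta\theta=0$ and the closed-plus-divergence-free condition. On an open manifold $\Delta(df)=0$ forces only $\div(\x^{T})$ to be locally constant rather than to vanish, so the characterization would break down. Non-orientability is harmless, since the pointwise nonnegative integrand $|d\theta|^{2}+(\div(\theta^{\sharp}))^{2}$ is integrated against the Riemannian density; alternatively one may lift to the oriented double cover. Everything else is formal.
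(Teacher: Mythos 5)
Your proof is correct and follows essentially the route the paper intends: Theorem \ref{T:6.8}(1) makes $(\x^{T})^{\flat}$ exact (with potential $f=\tfrac12|\x|^{2}$), compactness via the Hodge--Green identity reduces harmonicity of an exact $1$-form to $\div(\x^{T})=0$, and Theorem \ref{T:6.8}(2) converts incompressibility into $\<\x,H\>=-1$. You merely spell out what the paper leaves implicit (the explicit potential and the precise role of compactness), which is a clarification rather than a different argument.
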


Several explicit examples of Euclidean submanifolds with $(\x^{T})^{\flat}$ harmonic $1$-form were given in \cite{chen17}. In particular, by applying \cite[Theorem 3.2]{chen17} and Corollary \ref{C:6.9} we obtain the following.

\begin{corollary} For every equivariantly isometrical immersion of a compact homogeneous Riemannian manifold $M$ into a Euclidean $m$-space $\mathbb E^{m}$, the dual $1$-form $(\x^{T})^{\flat}$ of the canonical vector field $\x^{T}$ of $M$ is a harmonic $1$-form.
\end{corollary}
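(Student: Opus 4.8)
The plan is to reduce the statement to the divergence-free criterion already established in Corollary \ref{C:6.9}. Since $M$ is compact, that corollary tells us $(\x^{T})^{\flat}$ is harmonic if and only if $\langle\x,H\rangle=-1$ holds identically on $M$, so the whole task is to verify this single scalar identity for an equivariant immersion.

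First I would record the divergence of the canonical vector field. From \e{6.6} we have $\nabla_X\x^{T}=A_{\x^N}X+X$, so tracing against a local orthonormal frame $\{E_i\}$ gives $\div(\x^{T})=\trace(A_{\x^N})+n$. Since $\trace(A_{\x^N})=\sum_i\tilde g(h(E_i,E_i),\x^N)=n\langle H,\x^N\rangle=n\langle\x,H\rangle$, where the last equality uses that $H$ is normal while $\x^{T}$ is tangent, we obtain $\div(\x^{T})=n\bigl(1+\langle\x,H\rangle\bigr)$; this is precisely the computation behind Theorem \ref{T:6.8}(2).

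Next I would invoke equivariance. For an equivariant isometric immersion of a compact homogeneous Riemannian manifold, the (transitive) isometry group of $M$ acts by restrictions of rigid motions of $\mathbb E^{m}$, so every scalar function built intrinsically from $\x$, $h$ and $H$ is constant along orbits and hence constant on all of $M$. In particular $\langle\x,H\rangle\equiv c$ for some constant $c$; this constancy is exactly what \cite[Theorem 3.2]{chen17} supplies, and it is the only place the equivariance hypothesis enters. Because $M$ is compact and boundaryless, the divergence theorem yields $\int_M\div(\x^{T})\,dv=0$, and substituting the formula above gives $n(1+c)\vol=0$, forcing $c=-1$. Thus $\langle\x,H\rangle=-1$ identically, and Corollary \ref{C:6.9} concludes that $(\x^{T})^{\flat}$ is harmonic.

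The only delicate step is the constancy of $\langle\x,H\rangle$ in the third paragraph: for a merely homogeneous manifold this need not hold, and it is the equivariance of the immersion (encoded in \cite[Theorem 3.2]{chen17}) that renders $\langle\x,H\rangle$ invariant under the symmetry group and therefore constant. Once that is granted, the remainder is a one-line application of the divergence theorem together with the already-proven Corollary \ref{C:6.9}.
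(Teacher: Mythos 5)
Your proposal is correct and follows essentially the same route as the paper: the paper obtains the corollary by combining Corollary \ref{C:6.9} with \cite[Theorem 3.2]{chen17}, which asserts precisely that an equivariant isometric immersion of a compact homogeneous Riemannian manifold has incompressible canonical vector field, i.e.\ $\<\x,H\>\equiv-1$. The only difference is that you also unpack that cited fact via $\div(\x^{T})=n\(1+\<\x,H\>\)$, equivariance, and the divergence theorem --- a correct, self-contained justification, provided one adds the standard normalization that the origin is taken at a fixed point of the induced compact group of rigid motions, so that $\<\x,H\>$ is genuinely invariant under the transitive action.
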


\bigskip

\textit{Adara M. Blaga} 

\textit{Department of Mathematics}

\textit{West University of Timi\c{s}oara}

\textit{Timi\c{s}oara, Rom\^{a}nia}

\textit{adarablaga@yahoo.com}


\bigskip

\textit{Bang-Yen Chen}

\textit{Department of Mathematics}

\textit{Michigan State University}

\textit{East Lansing, MI, USA}

\textit{chenb@msu.edu}








\end{document}